\title{Representation stability in cohomology and\\ asymptotics for families of varieties over finite fields}
\author{Thomas Church, Jordan S. Ellenberg, and Benson Farb
  % \thanks{The third author gratefully acknowledges support from the
  %   National Science Foundation.}
}
\theoremstyle{plain}
\newtheorem{theorem}{Theorem}[section]
\newtheorem{theoremone}{Theorem}
\newtheorem{proposition}[theorem]{Proposition}
\newtheorem{prop}[theorem]{Proposition}
\newtheorem{numberedremark}[theorem]{Remark}
\newtheorem{lemma}[theorem]{Lemma}
\newtheorem{claim}[theorem]{Claim}
\newtheorem{observation}[theorem]{Observation}
\newtheorem{xample}[theorem]{Example}
\newtheorem*{theorem:pc}{Theorem \ref{theorem:principal congruence}}
\newtheorem*{theorem:jk}{Theorem \ref{theorem:johnson}}
\newtheorem*{theorem:br}{Theorem \ref{theorem:brunnian}}
\newtheorem{corollary}[theorem]{Corollary}
\theoremstyle{definition}
\newtheorem{cor}[theorem]{Corollary}
\newtheorem{definition}[theorem]{Definition}
\newtheorem*{aside}{Aside}
\newtheorem{numberedremarknotitalicized}[theorem]{Remark}
\newcommand{\nc}{\newcommand}
\newcommand{\set}[1]{\{#1\}}
\newcommand{\beq}{\begin{displaymath}}
\newcommand{\eeq}{\end{displaymath}}
\newcommand{\tensor} {\otimes}
\renewcommand{\AA}{\mathcal{A}}
\newcommand{\bs}{\backslash}
\nc{\dmo}{\DeclareMathOperator}
\nc{\I}{\mathcal{I}}
\nc{\K}{\mathcal{K}}
\nc{\U}{\mathcal{U}}
\renewcommand{\L}{\mathbb{L}}
\nc{\ra}{\to}
\nc{\Q}{\mathbb{Q}}
\nc{\R}{\mathbb{R}}
\nc{\Z}{\mathbb{Z}}
\nc{\C}{\mathbb{C}}
\nc{\N}{\mathbb{N}}
\nc{\F}{\mathbb{F}}
\nc{\A}{\mathbb{A}}
\nc{\T}{\mathcal{T}}
\nc{\tT}{\widetilde{\T}}
\nc{\V}{\mathcal{V}}
\nc{\PP}{\mathbf{P}}
\nc{\LL}{\mathbf{L}}
\nc{\cL}{\mathcal{L}}
\nc{\G}{\mathbb{G}}
\nc{\CP}{\C\PP}
\nc{\BB}{\mathcal{B}}
\nc{\sss}{s'}
\nc{\ttt}{t'}
\nc{\FF}{\mathcal{F}}
\nc{\Torus}{\mathbb{T}}
\dmo{\GL}{GL}
\dmo{\PSL}{PSL}
\dmo{\Teich}{Teich}
\dmo{\spec}{spec}
\nc{\gin}{i}
\nc{\ga}{\Gamma}
\dmo{\Gal}{Gal}
\dmo{\Out}{Out}
\dmo{\Aut}{Aut}
\dmo{\brun}{Brun}
\dmo{\Conf}{Conf}
\dmo{\AConf}{AConf}
\dmo{\PConf}{PConf}
\dmo{\Frob}{Frob}
\dmo{\fix}{fix}
\dmo{\trans}{trans}
\dmo{\sign}{sign}
\dmo{\mult}{mult}
\dmo{\Fix}{Fix}
\dmo\Tr{Tr}
\dmo\Hom{Hom}
\dmo\im{im}
\dmo\id{id}
\dmo\SL{SL}
\dmo\Sp{Sp}
\dmo\Mod{Mod}
\dmo\PMod{PMod}
\dmo\genus{genus}
\dmo\Jac{Jac}
\dmo\fd{fd}
\dmo\IA{IA}
\dmo\Schur{{\Bbb S}}
\dmo\Sym{Sym}
\dmo\Ind{Ind}
\dmo\Res{Res}
\dmo\tr{tr}
\dmo\Irr{Irr}
\dmo\st{st}
\dmo\Stab{Stab}
\DeclareMathOperator*{\expect}{\mathbb{E}}
\DeclareMathOperator*{\E}{\mathbb{E}}
\dmo\End{End}
\def\et{\textit{\'et}}
\def\Fqbar{\overline{\F}_q}
\def\kbar{\overline{k}}
\dmo\Tor{\mathcal{T}}
\nc{\bwedge}{{\textstyle\bigwedge}}
\def\cF{\mathcal{F}}
\nc{\cst}{\chi^{\st}}
\nc{\ctwo}{\chi^{\wedge^2}}
\nc{\ck}{\chi^{\wedge^k}}
\nc{\case}[1]{\left\{\begin{array}{c}#1\end{array}\right.}
\nc{\abs}[1]{\left\lvert#1\right\rvert}
\dmo\St{St}
\dmo\FI{FI}
\dmo\FIMod{FI-Mod}
\dmo\dMod{-Mod}
\nc{\FIsharp}{\FI\sharp}
\nc{\FIsharpMod}{\FIsharp\dMod}
\dmo\Spec{Spec}
\nc{\op}{\text{op}}
\renewcommand{\epsilon}{\varepsilon}
\nc{\coloneq}{\mathrel{\mathop:}\mkern-1.2mu=}
\nc{\margin}[1]{\marginpar{\scriptsize #1}}
\nc{\para}[1]{\medskip\noindent\textbf{#1.}}
\begin{document}

\maketitle

\begin{abstract}
We consider two families $X_n$ of varieties on which the symmetric group $S_n$ acts:  the configuration space of $n$ points in $\C$ and the space of $n$ linearly independent lines in $\C^n$.  Given an irreducible $S_n$-representation $V$, one can ask how the multiplicity of $V$ in the cohomology groups $H^*(X_n;\Q)$ varies with $n$.  We explain how the Grothendieck--Lefschetz Fixed Point Theorem converts a formula for this multiplicity to a formula for the number of polynomials over $\F_q$ (resp.\ maximal tori in $\GL_n(\F_q)$) with specified properties related to $V$. In particular, we explain how representation stability in cohomology, in the sense of \cite{CF} and \cite{CEF}, corresponds to asymptotic stability 
of various point counts as $n\to \infty$.
\end{abstract}

%\tableofcontents

\section{Introduction}
%The goal of this paper is to explain a strong connection between two different phenomena:
%\begin{enumerate}
%\item representation stability (in the sense of \cite{CF} and \cite{CEF}) for the cohomology 
%of the complex points of a family $X_n$ of algebraic varieties; and 
%\item  combinatorial stability for counting problems on the $\F_q$-points $X_n(\F_q)$.
%\end{enumerate}
%
%We will see how each of these types of stability is 
%reflected in, and indeed can be converted to, the other.  This is accomplished via the Grothendieck--Lefschetz Theorem (with twisted coefficients) in \'{e}tale cohomology.  Of particular interest here is the use of coefficients twisted by representations of the symmetric group $S_n$, each such representation giving the solution to a different counting problem. 

In this paper we consider certain families $X_1,X_2,\ldots$ of algebraic varieties for which $X_n$ is endowed with a natural action of the permutation group $S_n$. In particular $S_n$ acts on the complex solution set $X_n(\C)$, and so each cohomology group $H^i(X_n(\C))$ has the structure of an $S_n$-representation. We will attach to $X_n$ a variety $Y_n$ over the finite field $\F_q$. The goal of this paper is to explain how representation stability  for $H^i(X_n(\C))$, in the sense of \cite{CF} and \cite{CEF}, corresponds to asymptotic stability for certain counting problems on the  $\F_q$-points $Y_n(\F_q)$,
and vice versa.

We will concentrate on two such families of varieties in this paper. The first family is the configuration space of $n$ distinct points in $\C$:
\[X_n(\C)=\PConf_n(\C)=\big\{(z_1,\ldots,z_n)\,\big|\,z_i\in \C, z_i\neq z_j\big\}\]
In this case $Y_n(\F_q)$ is the space $\Conf_n(\F_q)$ of monic squarefree degree-$n$  polynomials in $\F_q[T]$. The second family is the space of $n$ linearly independent lines in $\C^n$:
\[X_n(\C)=\big\{(L_1,\ldots,L_n)\,\big|\,L_i \text{ a line in }\C^n,\ \ L_1,\ldots,L_n \text{ linearly independent}\big\}\]
In this case $Y_n(\F_q)$ is the space parametrizing the set of maximal tori in the finite group $\GL_n(\F_q)$.  
In both cases, the action of $S_n$ on $X_n(\C)$ simply permutes the points $z_i$ or the lines $L_i$.

The relation between $X_n(\C)$ and $Y_n(\F_q)$ is given by the Grothendieck--Lefschetz fixed point theorem %(with twisted coefficients)
in \'{e}tale cohomology.  For any irreducible $S_n$-representation 
$V_n$ with character $\chi_n$, the Grothendieck--Lefschetz theorem with twisted coefficients $V_n$ can be thought of as a machine that, under sufficiently nice geometric circumstances, converts topological input to algebraic output, as follows:
\begin{equation*}
\boxed{\text{Multiplicity of $V_n$ in  $H^i(X_n;\C)$}}
 \  \text{ \Huge $\rightsquigarrow$}\ 
\boxed{\text{Point count in $Y_n(\F_q)$ weighted by $\chi_n$}}
\end{equation*}

%Here we use ``multiplicity'' to mean ``inner product of characters''; the two notions agree when $V_n$ is irreducible.  

Thus every  representation $V_n$ corresponds to a different counting problem or statistic on $Y_n(\F_q)$.
%See \S\ref{section:GL} and \S\ref{s:purebraid} below for many examples and for details.
For the two families of varieties we consider, the situation is so favorable that the input and output can even be reversed, allowing us to draw conclusions about cohomology from combinatorial point-counting results; this is certainly not the case in general.

\para{Representation stability} We can further ask about the \emph{asymptotics} of these statistics: for example, how does a given statistic for squarefree polynomials in $\F_q[T]$ vary as the degree of the polynomial tends to $\infty$? The answer is provided by \emph{representation stability}.

The cohomology groups $H^i(X_n(\C);\Q)$ were studied  for both of these families in \cite{CF} and \cite{CEF} (among many other papers), where we proved that these cohomology groups are representation stable as $n\to\infty$. This implies that the multiplicity of any irreducible $S_n$-representation (suitably stabilized) in $H^i(X_n(\C))$ is eventually constant. Via the Grothendieck--Lefschetz theorem, representation stability for $H^i(X_n (\C))$ implies an asymptotic stabilization for  statistics on $Y_n(\F_q)$ as $n\to \infty$.
%   This stability is then reflected in the combinatorial stability of various point counting problems on $X_n(\F_q)$.

\medskip
Our first result makes this connection precise for the first family, relating the cohomology of $X_n(\C)=\PConf_n(\C)$ with statistics on $Y_n(\F_q)=\Conf_n(\F_q)$, the space of monic squarefree degree-$n$ polynomials $f(T)\in \F_q[T]$.

If $f(T)$ is a polynomial in $\F_q[T]$, let $d_i(f)$ denote the number of irreducible degree $i$ factors of $f(T)$. For any polynomial $P\in \Q[x_1,x_2,\ldots]$, we have the ``polynomial statistic'' on $\Conf_n(\F_q)$ defined by  $P(f)=P(d_1(f),d_2(f),\ldots)$.
Similarly, let $\chi_P(\sigma)$ be the class function $\chi_P(\sigma)=P(c_1(\sigma),c_2(\sigma),\ldots)$ on $S_n$, where $c_i(\sigma)$ denote the number of $i$-cycles of $\sigma$. We define the degree $\deg P$ as usual, except that $\deg x_k=k$.%The results of \cite{CEF} imply that for any fixed polynomial $P$ and any $i\geq 0$, the inner product of class functions $\langle \chi_P,\chi_{H^i(\PConf_n(\C))}\rangle_{S_n}$ is eventually constant as $n\to \infty$.

\begin{theoremone}[{\bf Stability of polynomial statistics}]
\label{theorem:general}
For any polynomial $P\in\Q[x_1,x_2,\ldots ]$, the limit 
 \[\langle \chi_P,H^i(\PConf(\C))\rangle\coloneq
 \lim_{n\to \infty}\big\langle \chi_P,\chi_{H^i(\PConf_n(\C))}\big\rangle_{S_n} \] 
 exists; in fact, this sequence is constant for $n\geq 2i+\deg P$.  Furthermore, for each prime power $q$:
\beq
\lim_{n \ra \infty} q^{-n} \sum_{f \in \Conf_n(\F_q)} P(f) = \sum_{i=0}^\infty (-1)^i\frac{
\langle \chi_P,H^i(\PConf(\C))\rangle}{q^i}
\eeq
In particular, both the limit on the left and the series on the right converge, and they converge to the same limit.
\end{theoremone}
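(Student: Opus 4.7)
My plan is to combine the FI-module representation stability of \cite{CEF} with an exact Grothendieck--Lefschetz identity, and then pass to the limit $n\to\infty$. For the first assertion---constancy of $\langle\chi_P,\chi_{H^i(\PConf_n(\C))}\rangle_{S_n}$ for $n\ge 2i+\deg P$---I would invoke the FI-module structure on $H^i(\PConf_\bullet(\C);\Q)$ proved in \cite{CEF}: the character $\chi_{H^i(\PConf_n(\C))}$ is given, for $n$ sufficiently large, by a single character polynomial $Q_i$ of degree $\le 2i$, and the inner product of two character polynomials on $S_n$ is constant in $n$ once $n$ exceeds the sum of their degrees. This simultaneously defines the stable multiplicity $\langle\chi_P,H^i(\PConf(\C))\rangle$.

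For the point-counting formula, I would first establish an exact identity valid for each $n$. The map $\PConf_n\to\Conf_n$ is a finite étale $S_n$-cover defined over $\F_q$, and the virtual $S_n$-representation $V_P$ with character $\chi_P$ yields a virtual lisse $\overline{\Q}_\ell$-sheaf $\mathcal{V}_P$ on $\Conf_n/\F_q$. Given $f\in\Conf_n(\F_q)$, Frobenius permutes the geometric roots of $f$ by $\sigma_f\in S_n$, whose cycle type is $(d_1(f),d_2(f),\dots)$, so it acts on the fiber $(\mathcal{V}_P)_f=V_P$ with trace $\chi_P(\sigma_f)=P(f)$. The Grothendieck--Lefschetz trace formula with twisted coefficients then gives
\[\sum_{f\in\Conf_n(\F_q)}P(f)=\sum_j(-1)^j\Tr\bigl(\Frob_q\,\big|\,H^j_c(\Conf_n;\mathcal{V}_P)\bigr).\]
Applying finite-cover transfer $H^j_c(\Conf_n;\mathcal{V}_P)\cong(H^j_c(\PConf_n)\otimes V_P)^{S_n}$, Poincaré duality on the smooth $n$-dimensional variety $\PConf_n$, and the classical fact (Lehrer, after Orlik--Solomon) that $H^*(\PConf_n;\Q_\ell)$ is of Tate type with Frobenius acting on $H^i(\PConf_n)$ by $q^i$, I would rearrange the identity into the closed form
\[q^{-n}\sum_{f\in\Conf_n(\F_q)}P(f)=\sum_{i=0}^{n-1}(-1)^iq^{-i}\,\langle\chi_P,\chi_{H^i(\PConf_n(\C))}\rangle_{S_n},\]
valid for every $n$.

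To pass to the limit, Step 1 says that for each fixed $i$ the summand on the right eventually equals the stable value $\langle\chi_P,H^i(\PConf(\C))\rangle$. Splitting the sum into a stable range ($i$ with $2i+\deg P\le n$) and an unstable tail, formal term-by-term convergence produces the desired series $\sum_{i=0}^\infty(-1)^iq^{-i}\langle\chi_P,H^i(\PConf(\C))\rangle$. To make this rigorous---and to deduce convergence of the left-hand side---one must justify the interchange of limit and sum, for which a uniform-in-$n$ bound of the form $|\langle\chi_P,\chi_{H^i(\PConf_n(\C))}\rangle_{S_n}|\le A\cdot B^i$ with $B<q$ would suffice via dominated convergence. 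Such a bound should follow from the FI-module weight bound on $H^i(\PConf)$ (only irreducibles $V(\lambda)_n$ with $|\lambda|\le 2i$ appear, and their stable multiplicities are uniformly bounded in $n$) combined with standard estimates on character polynomials.

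The main obstacle is precisely this analytic tail estimate. The geometric ingredients (Grothendieck--Lefschetz, transfer, Poincaré duality, the purity and Tate-type structure of $H^*(\PConf_n)$) and the FI-module stability of \cite{CEF} are essentially off-the-shelf, so the real work lies in controlling the possibly unstable multiplicities $\langle\chi_P,\chi_{H^i(\PConf_n(\C))}\rangle_{S_n}$ uniformly in $n$ and with sufficient geometric decay in $i$ to dominate $q^{-i}$---this is the key technical step enabling the limit-and-sum interchange that turns the finite-$n$ identity into the asymptotic formula.
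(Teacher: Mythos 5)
Your first two steps coincide with the paper's own argument: the stable range $n\ge 2i+\deg P$ comes from the fact that $\chi_{H^i(\PConf_n(\C))}$ is given by a single character polynomial of degree $\le 2i$ (\cite[Theorem~4.7]{CEF}) together with the constancy of inner products of character polynomials once $n$ exceeds the sum of their degrees (Proposition~\ref{pr:charpolyexpect}); and your exact identity $q^{-n}\sum_{f}P(f)=\sum_i(-1)^iq^{-i}\langle\chi_P,H^i(\PConf_n(\C))\rangle$ is precisely Theorem~\ref{th:chi} (specialized as Proposition~\ref{pr:exactcount}), proved by the same chain of twisted Grothendieck--Lefschetz, transfer, Poincar\'e duality, and the Tate-type action of Frobenius.

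The gap is in your third step. You correctly isolate the needed estimate---a bound on $\big|\langle\chi_P,\chi_{H^i(\PConf_n(\C))}\rangle_{S_n}\big|$ that is uniform in $n$ and decays against $q^{-i}$---but the tools you cite do not produce it. Finite generation of the FI-module $H^i(\PConf(\C))$ in weight $\le 2i$ gives, for each \emph{fixed} $i$, that the relevant multiplicities are eventually constant (hence bounded) in $n$; it says nothing about how these stable values grow as $i\to\infty$, and a priori they could grow exponentially in $i$, in which case the series diverges and the limit-sum interchange fails. Indeed the paper warns that a generic FI-CHA over $\C$ should \emph{not} be expected to satisfy this convergence condition, so it cannot be a formal consequence of the FI-module formalism. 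What the paper actually uses (Proposition~\ref{pr:PConfconvergent}) is the Lehrer--Solomon decomposition $H^i(P_n;\C)=\bigoplus_\mu\Ind_{Z(c_\mu)}^{S_n}(\xi_\mu)$ over conjugacy classes with $n-i$ cycles: there are at most $p(2i)$ summands, and since each $Z(c_\mu)$ contains $S_{n-2i}$, each summand contributes at most $O(i^a)$ to $\dim H^i(P_n)^{S_{n-a}}$, yielding a bound $p(2i)\cdot O(i^a)$ that is subexponential in $i$ and uniform in $n$. Some explicit structural input of this kind, beyond finite generation, is the missing ingredient in your proposal. (A smaller point: since the theorem is asserted for every prime power $q$, including $q=2$, a geometric bound $A\cdot B^i$ only suffices when $B<q$; the subexponential bound is what makes the statement uniform over all $q$.)
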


Theorem~\ref{theorem:general} is proved as Proposition~\ref{pr:purebraidlimit} below, as a special case of the more general Theorem~\ref{th:limit} for arbitrary FI-hyperplane arrangements. We also have an analogue of Theorem~\ref{theorem:general} for asymptotics of polynomial statistics  for maximal tori in $\GL_n(\F_q)$, which is proved as Theorem~\ref{th:torilimit}.

\medskip
Table A gives a sampling of the results that we will explain and prove in this paper. 
Formulas (1)-(5) in each column are obtained from the Grothendieck--Lefschetz theorem  with 
$V_n$ equal to the trivial representation, the standard representation $\C^n$, its exterior power $\bigwedge^2\C^n$, 
the sign representation, and the $n$-cycle character, respectively. In particular, Formulas (1)-(3) can be seen as applications of Theorem~\ref{theorem:general} with $P=1$, $P=X_1$, and $P=\binom{X_1}{2}-X_2$ respectively. One key message of this paper is that representation stability provides a single underlying mechanism  for all such formulas.

\begin{center}
\begin{tabular}{rll}
&Counting theorem for\qquad\qquad\qquad\qquad\ &Counting theorem for\\
&\underline{squarefree polys in $\F_q[T]$}&\underline{maximal tori in $\GL_n \F_q$}\\[14pt]

(1)&\# of degree-$n$ squarefree& \# of maximal tori in $\GL_n\F_q$\\
&polynomials $=q^n-q^{n-1}$&  (both split and non-split) $=q^{n^2-n}$\\[12pt]

(2)&expected \# of linear factors& expected \# of eigenvectors in $\F_q^n$\\
&$=1-\frac{1}{q}+\frac{1}{q^2}-\frac{1}{q^3}+\cdots \pm \frac{1}{q^{n-2}}$& $=1+\frac{1}{q}+\frac{1}{q^2}+\cdots +\frac{1}{q^{n-1}}$\\[12pt]

(3)&expected excess of \emph{irreducible} & expected excess of \emph{reducible} \\
&vs. \emph{reducible} quadratic factors &vs. \emph{irreducible} dim-2 subtori\\
&\ \ $\to\  \frac{1}{q}-\frac{3}{q^2}+\frac{4}{q^3}-\frac{4}{q^4}$&\ \ $\to\  \frac{1}{q}+\frac{1}{q^2}+\frac{2}{q^3}+\frac{2}{q^4}$\\
&\qquad$\ \ +\frac{5}{q^5}-\frac{7}{q^6}+\frac{8}{q^7}-\frac{8}{q^8}+\cdots$&\qquad$\ \ +\frac{3}{q^5}+\frac{3}{q^6}+\frac{4}{q^7}+\frac{4}{q^8}+\cdots$\\
&as $n\to \infty$&as $n\to \infty$\\[12pt]

(4)&discriminant of random squarefree&\# of irreducible factors is more\\
&polynomial is equidistributed in $\F_q^\times$&likely to be $\equiv n\bmod{2}$ than not,\\
&between residues and nonresidues&with bias $\sqrt{\text{\# of tori}}$\\[12pt]

(5)&Prime Number Theorem for $\F_q[T]$:&\# of irreducible maximal tori\\
& \# of irreducible polynomials &$=\frac{q^{\binom{n}{2}}}{n}(q-1)(q^2-1)\cdots(q^{n-1}-1)$\\
&\quad$=\sum_{d|n}\frac{\mu(n/d)}{n}q^{d}\sim \frac{q^n}{n} $&\qquad\qquad\qquad$\sim \frac{q^{n^2-n}}{n}$
\end{tabular}
Table A
\end{center}
%% TC: this is so that the equation numbering starts with (6)
\setcounter{equation}{5}

%\[
%\begin{array}{c}
%\text{\underline{\bf Input}}\\
%\text{A rational representation} \\
%S_n\to \GL(V)
%\end{array}\ \ \ 
%\begin{array}{c}
%\text{\underline{\bf Output}}\\
%\text{Formula for $\#$ of polynomials (resp.\ maximal}\\
%\text{tori in $\GL_n(\F_q)$) with specified properties}
%\end{array}
%\]
%
%\bigskip
%More specifically, here is a first list of examples exhibiting the input/output of this procedure for polynomials over $\F_q$.  We note that all squarefree polynomials considered are understood to be {\em monic} unless specified otherwise

The formulas in Table A are by and large not original to the present paper.  The formulas in the left column can be proved by direct means, and Lehrer has also analyzed them in the light of the Grothendieck--Lefschetz formula \cite{lehrer,lehrer:survey,lehrer:discriminantvarieties,kisinlehrer}. In contrast,  the formulas for maximal tori in $\GL_n \F_q$  may be known but are not so easy to prove. For example, formula (1) is the $\GL_n$ case of a well-known theorem of Steinberg; proofs using the Grothendieck--Lefschetz formula have been given by  Srinivasan~\cite{Sr} and Lehrer~\cite{lehrer:rationaltori}.

\para{Outline of paper}
This paper has two goals: 1) to provide a readable introduction to the connections between topology and combinatorics  given by the Grothendieck--Lefschetz theorem, and 2) to emphasize the \emph{stabilization} in these formulas as $n\to \infty$, and its connections with representation stability in topology. Although the details of our approach differ somewhat from the previous literature, our real aim is to make these connections accessible to a wider audience.

In the remainder of this introduction we give a detailed description, without proofs, of the connections between topology and combinatorics that lead to formulas like those in Table A.  In Section~\ref{section:GL} we give an introduction to the Grothendieck--Lefschetz theorem, with examples of its application to Theorem~\ref{theorem:general}. %(For further details, including the explanation of the precise powers of $q$ appearing in the Grothendieck--Lefschetz formulas, see Section~\ref{s:polynomials}.)
In Section~\ref{s:polynomials} we prove a general version of Theorem~\ref{theorem:general} for hyperplane complements  that can be generated in a uniform way by a finite set of ``generating hyperplanes''. In Section~\ref{s:purebraid} we focus on the configuration space $\PConf_n(\C)$ %, which is the complement of the hyperplanes $\{z_i=z_j\}$,
and prove the formulas (1)-(5) on the left side of Table A, as well as   formulas for more complicated statistics. In Section~\ref{s:maximaltori} we establish the analogue of Theorem~\ref{theorem:general} for maximal tori in $\GL_n\F_q$, and prove the formulas on the right side of Table A.

\subsection{Relating topology and combinatorics}
There are three distinct types of \emph{stability} present in the formulas in Table A, and each corresponds to a different topological phenomenon. We will describe each type of combinatorial stability in turn, and for each we will highlight its reflection on the topological side.

\para{Independence of $q$ and rational cohomology}
First, the formulas in Table A are independent of $q$ in some sense. Of course these point counts are not literally independent; we can check by examination that there are 18 squarefree cubic polynomials in $\F_3[T]$ (recalling our convention that squarefree polynomials are always taken to be monic):
\[
\begin{array}{llll}
&T^3+T&T^3-T\\
T^3+T+1&T^3+T-1&T^3-T+1&T^3-T-1\\
T^3+T^2-T&T^3-T^2-T&T^3+T^2+1&T^3-T^2+1\\
T^3-T^2-1&T^3+T^2+T+1&T^3-T^2+T+1&T^3+T^2-T+1\\
T^3+T^2+T-1&T^3+T^2-T-1&T^3-T^2+T-1&T^3-T^2-T-1
\end{array}
\] If we were to carry out the same count in $\F_{11}[T]$ we would find 1210  squarefree cubic polynomials, not 18. But once we notice that $18=3^3-3^2$ and $1210=11^3-11^2$, we see that these counts depend on $q$ in exactly the same way. In fact, formula (1) in Table A says that the number of  squarefree cubic polynomials in $\F_q[T]$ is always $q^3-q^2$. 

The same independence arises in many common point-counting situations: for example, the number of lines in $\F_q^3$ is $q^2+q+1$, no matter what $q$ is.
%\[|\mathbf{P}^{N-1}(\F_q)|=1+q+q^2+\cdots+q^{N-1}. \]
%In many such situations, including those described in this paper, 
The Grothendieck--Lefschetz theorem explains these coincidences as reflecting the underlying topology of the complex points of an algebraic variety.
In particular, we can match the terms occurring in each point-counting formula with those rational cohomology groups that are nonzero, providing a surprising bridge between topology and arithmetic. 

As a simple example, consider the problem of counting the number of lines in $\F_q^3$; that is, the number of points in the projective space $\PP^2(\F_q)$.  The corresponding variety is $\CP^2$, the topological space of complex lines in $\C^3$.  It is easy to compute by hand that 
\[H^0(\CP^2)=\Q,\ \  H^2(\CP^2)=\Q,\ \  H^4(\CP^2)=\Q,\]
and these three nonzero cohomology groups correspond to the three terms of the point-counting formula
\[|\PP^2(\F_q)|=q^2+q+1.\]

For the count of  squarefree cubic polynomials in $\F_q[T]$, the corresponding variety is the topological space of  squarefree cubic complex polynomials, which we denote by $\Conf_3(\C)$:
\[\Conf_3(\C)=\big\{f(z)=z^3+bz^2+cz+d\,\big\vert\,b,c,d\in \C,\ 
f(z)\text{ is squarefree}\big\}\]
By considering the coefficients $(b,c,d)\in \C^3$, we can identify $\Conf_3(\C)$ with the complement in $\C^3$ of the discriminant locus, where $b^2c^2-4c^3-4b^3d-27d^2+18bcd=0$. By a direct calculation we find that $H^0(\Conf_3(\C))=\Q$ and $H^1(\Conf_3(\C))=\Q$, but that all other cohomology groups vanish.
These two nonzero cohomology groups correspond respectively to the two terms of the formula (1):
\[|\Conf_3(\F_q)|=q^3-q^2\quad\longleftrightarrow\quad H^0(\Conf_3(\C))=\Q,\ \ H^1(\Conf_3(\C))=\Q\]

\para{Asymptotics of counts and homological stability}
A second form of stability  in the formulas in Table A is that  they  are in some sense independent of $n$. As before, we know that the counts cannot literally be   independent of $n$.
%\,---\,the number of squarefree cubic polynomials in $\F_3[T]$ is $18=3^3-3^2$, while the number of squarefree quartic polynomials is $54=3^4-3^3$\,---\,
Nevertheless, the single formula $q^n-q^{n-1}$ gives the number of all squarefree degree-$n$ polynomials in $\F_q[T]$ for \emph{all} $n\geq 2$.

The set of all squarefree, degree $n$ polynomials in $\C[T]$ is the complex algebraic variety 
\[\Conf_n(\C)=\big\{f(z)\in \C[T]\,\big\vert\,\deg f(z)=n,\ \ f(z)\text{ is squarefree}\big\}.\]
The stability of the formula $q^n-q^{n-1}$ as $n$ increases reflects  \emph{homological stability} for the topological spaces $\Conf_n(\C)$: Arnol'd  proved that for \emph{any} $n\geq 2$ the space $\Conf_n(\C)$ has the rational cohomology of a circle. Therefore for any $n\geq 2$ there are two nonzero cohomology groups of $\Conf_n(\C)$, which correspond to the two terms of the formula (1): \[\left|\Conf_n(\F_q)\right|=q^n-q^n\quad\longleftrightarrow\quad H^0(\Conf_n(\C))=\Q,\ \ H^1(\Conf_n(\C))=\Q\]

This situation is  simpler than most, since here we have not just \emph{stability} of cohomology, but actually \emph{vanishing} of cohomology: for $i\geq 2$ we have $H^i(\Conf_n(\C);\Q)=0$ for all $n$.  In general, homological stability for a sequence of spaces $X_n$ only means that $H^i(X_n)$ is independent of $n$ for $n\gg i$. A more representative example is given by the number of lines in $\F_q^{n+1}$ as $n$ varies. The corresponding topological space is $\CP^n$, the space of complex lines in $\C^{n+1}$. These projective spaces do exhibit homological stability, since $H^*(\CP^n)=\Q[x]/(x^{n+1})$ with $x\in H^2(\CP^n)$. Working degree-by-degree, this means that $H^{2k}(\CP^n)=\Q$ for all $n\geq k$, while $H^{2k+1}(\CP^n)=0$ for all $n\geq 0$. Therefore the $n+1$ nonzero cohomology groups of $\CP^n$ correspond to the $n+1$ terms of the point-counting formula. \[|\PP^n(\F_q)|=q^n+q^{n-1}+\cdots+q+1\quad\longleftrightarrow\quad
H^{2k}(\CP^n)=\Q,\ \ k=0,1,\ldots,n-1,n.\]
Under this correspondence, the stabilization of $H^{2k}(\CP^n)$ for $n\geq k$ corresponds to the stabilization of the $q^{n-k}$ term on the left side once $n\geq k$.

\para{Combinatorial statistics and representation stability}
Both of the previous types of stability are well-understood, both on the topological and combinatorial side. Our focus in this paper is on a new kind of combinatorial stability, whose topological reflection is the \emph{representation stability} of \cite{CF} and \cite{CEF}. The rest of the introduction will be spent explaining this connection.

The new feature here is that we are not just counting squarefree polynomials, but certain \emph{combinatorial statistics} associated to them.
Let us focus on  formula (2) on the left side in Table A. This formula says that if a squarefree polynomial of degree $n$ over $\F_q$ is chosen at random, we can expect that it will have slightly less than 1 linear factor on average. For example, for the squarefree cubic polynomials in $\F_3[T]$ that we listed above, the number of linear factors is:
\[
\begin{array}{rlllll}
3:&T^3-T\\
1:&T^3-T^2-T&T^3+T+1&T^3+T-1&T^3+T^2-T&T^3+T\\
&T^3+T^2+1&T^3-T^2-1&T^3+T^2+T+1&T^3-T^2+T-1\\
0:&T^3-T+1&T^3-T-1&T^3-T^2+1
&T^3-T^2+T+1\\&T^3+T^2-T+1
&T^3+T^2+T-1&T^3+T^2-T-1&T^3-T^2-T-1
\end{array}
\] 
Therefore if we randomly select from these 18 possibilities, the expected number of linear factors that our chosen polynomial will have is\[\frac{(3\cdot 1)+(2\cdot 0)+(1\cdot 9)+(0\cdot 8)}{18}=\frac{12}{18}=\frac{2}{3}=1-\frac{1}{3}.\phantom{+\frac{1}{9}\frac{1}{27}}\] For the 54 squarefree quartic polynomials in $\F_3[T]$, we would find that none has three linear factors, 9 have two linear factors, 24 have one linear factor, and the remaining 21 have no linear factors at all. (Of course, no squarefree polynomial in $\F_3[T]$ can have \emph{four} linear factors, since there are only three elements of $\F_3$ which could be its roots!) Thus the expected number of linear factors in this case is
\[\frac{(3\cdot 0)+(2\cdot 9)+(1\cdot 24)+(0\cdot 21)}{54}=\frac{42}{54}=\frac{7}{9}=1-\frac{1}{3}+\frac{1}{9}.\phantom{\frac{1}{27}}\]
For the 162 squarefree quintic polynomials the same computation of the expectation would take the form:
\[\frac{(3\cdot 3)+(2\cdot 24)+(1\cdot 63)+(0\cdot 72)}{162}=\frac{120}{162}=\frac{20}{27}=1-\frac{1}{3}+\frac{1}{9}-\frac{1}{27}.\]
For the 146,410 squarefree quintic polynomials in $\F_{11}[T]$, the computation is a good deal more complicated, now involving polynomials with up to five linear factors. Yet the formula (2) tells us that the expectation must work out to exactly
\[\frac{134200}{146410}=\frac{1220}{1331}=1-\frac{1}{11}+\frac{1}{121}-\frac{1}{1331}.\]

\para{Twisted cohomology} Topologically, these formulas are still explained by the cohomology of $\Conf_n(\C)$, but now with certain \emph{twisted} coefficients, which we now describe.

Given a squarefree degree-$n$ polynomial $f(T)\in \C[T]$, its set of roots 
\[R(f)\coloneq\{\lambda\in \C\, | f(\lambda)=0\}\] varies continuously as we vary $f(T)$. Therefore the space $V$ defined by \[V\coloneq \big\{\,\big(f(T)\in \Conf_n(\C),\ \ h\colon R(f)\to \Q\,\big)\big\}\] has a continuous map $V\to \Conf_n(\C)$ given by $(f(T),h)\mapsto f(T)$.
Since $|R(f)|=n$ for all $f(T)\in \Conf_n(\C)$, we can think of $V$ as a vector bundle $\Q^n\to V\to \Conf_n(\C)$, where the fiber over $f(T)\in \Conf_n(\C)$ is the $\Q$-vector space of functions on the set of roots $R(f)$. 

Note that there is no natural choice of ordering for the roots in $R(f)$, so we cannot find a global trivialization of the vector bundle $V$. However for small deformations $f_t(T)$ of the polynomial $f(T)$, the set of roots $R(f_t)$ is close to $R(f)$, and so we do have a canonical identification between $R(f)$ and $R(f_t)$ by which we can transfer $h\colon R(f)\to \R$ to $h_t\colon R(f_t)\to \R$. (For example, there is an ``obvious'' bijection between the roots of $(T-1)(T-2)(T-3)$ and $(T-2.01)(T-3.01)(T-1.01)$, even though we cannot talk about the ``first root'' of either polynomial.) This gives $V$ the structure of a flat vector bundle (also called a \emph{local system}) over $\Conf_n(\C)$.

We denote by $H^i(\Conf_n(\C);\Q^n)$  the twisted cohomology of $\Conf_n(\C)$ with coefficients in $V$; it is these cohomology groups that correspond to the counts of linear factors in the formula (2). For example, we will compute in Section~\ref{sec:standard} that $H^i(\Conf_5(\C);\Q^5)=$
\[\Q\text{ for }i=0,\quad \Q^2\text{ for }i=1,\quad \Q^2\text{ for }i=2,\quad \Q^2\text{ for }i=3,\quad \Q\text{ for }i=4,\] and $0$ for $i\geq 5$. %\[H^i(\Conf_5(\C);\Q^5)=\begin{cases}\Q&i=0\\\Q^2&i=1,2,3\\\Q&i=4\\0&i\geq 5\end{cases}\]
This corresponds to the fact that the total number of linear factors over all squarefree quintic polynomials in $\F_3[T]$ is
\[3^5\ -\ 2\cdot 3^4\ +\ 2\cdot 3^3\ -\ 2\cdot 3^2\ +\ 3
%=243-162+54-18+3
=120\]
while in $\F_{11}[T]$ the total number is
\[11^5\ -\ 2\cdot 11^4\ +\ 2\cdot 11^3\ -\ 2\cdot 11^2\ +\ 11=134200.\]
These are precisely the numerators of the fractions $\frac{120}{162}$ and $\frac{134200}{146410}$ that we computed above. (The denominators arise because the natural quantity to count is the \emph{expected} number, rather than the \emph{total} number, of linear factors.) 
These computations allow us to give another derivation of some recent results of Kupers--Miller \cite{KM}, in relation to a prediction made by Vakil--Wood \cite{VW}; see Section~\ref{sec:standard} for details.

\para{Combinatorial statistics and local systems} At this point, one should ask why the twisted coefficient system $\Q^n$  corresponds to the number of linear factors of a polynomial in $\F_q[T]$, rather than some other statistic. 

Let $g(T)\in \Conf_n(\F_q)$ be a squarefree polynomial with coefficients in $\F_q$.  
The Frobenius map $\Frob_q\colon \Fqbar\to \Fqbar$ defined by $x\mapsto x^q$ fixes exactly the elements of $\F_q\subset \Fqbar$. 
%Just as the roots of a real polynomial are invariant under complex conjugation, 
Since $\Frob_q$ fixes each of the coefficients of $g(T)$, it therefore must permute the set of roots $R(g)=\{\lambda\in \Fqbar\,|\,g(\lambda)=0\}$.   
%Moreover, a real polynomial factors into linear factors (for each real root fixed by conjugation) and irreducible quadratic factors (for each pair of complex roots exchanged by conjugation). 
%The same is true for $g(T)\in \F_q[T]$: 
If $\sigma_g$ is the permutation of the roots $R(g)$ induced by $\Frob_q$, each length-$k$ orbit of $R(g)$ under $\sigma_g$ corresponds to a single irreducible factor of $g(T)$ of degree $k$.
In the language of Theorem~\ref{theorem:general}, the number of degree-$i$ factors of $g(T)$ was denoted $d_i(g)$, while $c_i(\sigma)$ denoted the number of $i$-cycles in $\sigma$. We can summarize this discussion as
\[d_i(g)=c_i(\sigma_g).\]
In particular, the number of linear factors of $g(T)$ is $c_1(\sigma_g)$.

This permutation of the roots has a parallel in the topological picture: any loop $\gamma(t)=f_t(T)$ in $\Conf_n(\C)$ beginning and ending at $f(T)$ induces a permutation  $\sigma_\gamma$ of the roots $R(f)$, by continuing the identification $R(f)\simeq R(f_t)$ around the loop $\gamma(t)$.
Our construction of $V$ guarantees that the monodromy $\gamma_*\colon V_f\to V_f$ given by transporting the fiber $V_f$ along this loop is the matrix representation of the permutation $\sigma_\gamma$. In particular, the \emph{trace} $\chi_V(\gamma)=\tr\gamma_*$ is the number of fixed points $c_1(\sigma_\gamma)$ of the permutation $\sigma_\gamma$. This is why the coefficient system $V$ corresponds to counting linear factors, rather than some other statistic. %; see Section~\ref{section:???} for full details.
%As we will explain in Section~\ref{section:???}, this is the
%the permutation $\sigma_g$ the action of $\Frob_q$ on the roots of $g(T)$ corresponds to the the monodromy $\gamma_*$, and 

\para{Finding appropriate coefficient systems}
In general, say that we want to understand the polynomial statistic $P(g)=P(d_1(g),d_2(g),\ldots)$ of $g(T)\in \Conf_n(\F_q)$ for some polynomial $P\in \Q[X_1,X_2,\ldots]$. Then we need to find a coefficient system $W$ on $\Conf_n(\C)$ for which $\tr \gamma_*\colon V_f\to V_f$ is given by $\chi_P(\sigma_\gamma)=P(c_1(\sigma_\gamma),c_2(\sigma_\gamma),\ldots)$. Once we've found $W$, the expected value of the statistic $P(g)$ can be read off the twisted cohomology $H^*(\Conf_n(\C);W)$.
%In particular, we can explain other statistics of squarefree polynomials by constructing other twisted coefficient systems on $\Conf_n(\C)$ with prescribed characters.
%For example, an irreducible quadratic factor of $g(T)\in \Conf_n(\F_q)$ corresponds to a pair of roots $\{\lambda,\overline{\lambda}\}$ in $\F_{q^2}$ which are exchanged by $\Frob_q$. Therefore to count irreducible quadratic factors,  we would need to find a coefficient system $U$ on $\Conf_n(\C)$ for which the trace $\chi_U(\gamma)=\tr \gamma_*$ is the number of \emph{transpositions} in $\sigma_\gamma$.
Fortunately, we can do this for \emph{any} statistic! It is actually not possible to realize every polynomial statistic itself by a single coefficient system, but we can always express it as a \emph{linear combination} of statistics for which the necessary coefficient system can be constructed. %In fact, in this way we can explain via topology \emph{any} statistic for squarefree 
%degree $n$ polynomials that depends only on the factorization into irreducibles; see Theorem~\ref{theorem:general} below. 

\para{Irreducible versus reducible quadratic factors} In the formula (3) of Table A we compare the numbers of \emph{irreducible} versus \emph{reducible} quadratic factors; we'll refer to the difference of these statistics as the \emph{quadratic excess} of a polynomial. An irreducible quadratic factor of $g(T)$ corresponds to a pair of roots $\{\lambda,\overline{\lambda}\}$ which are exchanged by $\Frob_q$, or a 2-cycle of $\sigma_g$. In the same way, a reducible quadratic factor corresponds to a pair of roots which are each fixed by $\Frob_q$, or a pair of fixed points of $\sigma_g$. %This shows the quadratic excess of a degree-$n$ squarefree polynomial is bounded above by $\frac{n}{2}$, and  below by $-\binom{n}{2}$. The asymmetry in these bounds might lead us to guess that the quadratic excess would be negative on average, but we will see  that this is not the case.
Therefore the statistic we are looking for is $P(g)$ with $P=\binom{X_1}{2}-X_2$.

This is realized by the coefficient system
$W=\bwedge^2 V$. The fiber $W_f$ has basis $e_\lambda\wedge e_{\lambda'}$ for each pair $\lambda\neq \lambda'$ of roots in $R(f)$, and the monodromy $\gamma_*$ permutes these basis elements according to the action of $\sigma_f$ on $R(f)$. If $\lambda$ and $\lambda'$ are both fixed by $\sigma_f$ we have $e_\lambda\wedge e_{\lambda'}\mapsto e_\lambda\wedge e_{\lambda'}$, while if $\lambda$ and $\lambda'$ are exchanged by $\sigma_f$ we have $e_\lambda\wedge e_{\lambda'}\mapsto -e_\lambda\wedge e_{\lambda'}$. Therefore the trace $\chi_W(\gamma)=\tr\gamma_*\colon W_f\to W_f$ is given by
\[\chi_W(\gamma)=\binom{\text{\# fixed points of }\sigma_\gamma}{2}-\text{\# transpositions of }\sigma_\gamma=\binom{c_1(\sigma_\gamma)}{2}-c_2(\sigma_\gamma)=\chi_P(\sigma_\gamma)\]
as desired. Therefore topologically, the quadratic excess can be computed from the cohomology $H^i(\Conf_n(\C);\bwedge^2 \Q^n)$.

For a concrete example, we can compute that $H^i(\Conf_5(\C);\bwedge^2 \Q^5)=$
%\begin{equation}
%\label{eq:intro:wedge2mult}
\[0\text{ for }i=0,\quad \Q\text{ for }i=1,\quad \Q^4\text{ for }i=2,\quad \Q^5\text{ for }i=3,\quad \Q^2\text{ for }i=4,\]
%\end{equation}
and $0$ for $i\geq 5$.
This tells us that the total quadratic excess of squarefree quintics in $\F_q[T]$ will be %\begin{equation}
%\label{eq:intro:quad}
\[q^4-4q^3+5q^2-2q.\]
%\end{equation}
Dividing by $\left|\Conf_5(\F_q)\right|=q^5-q^4$, we find that the expected value of the quadratic excess is $\frac{1}{q}-\frac{3}{q^2}+\frac{2}{q^3}$.

\para{Twisted homological stability} Finally, we arrive at our real focus in this paper: the \emph{stabilization} of formulas such as (2) and (3) as $n\to\infty$. If we extended the above computations of quadratic excess to polynomials of higher degree, we would find:
\begin{align*}
&\text{total: }&&\text{expectation: }\\
n=5:\qquad&q^4-4q^3+5q^2-2q&&\frac{1}{q}-\frac{3}{q^2}+\frac{2}{q^3}\\
n=6:\qquad&q^5-4q^4+7q^3-7q^2+3q&&\frac{1}{q}-\frac{3}{q^2}+\frac{4}{q^3}-\frac{3}{q^4}\\
%
%n=6: 2H^2+5H^3+5H^4+2H^5
%+ 0 1 2 2 2 1 0
%= H^1+4H^4+7H^3+7H^4+3H^5
%
%
%n=7: 2H^2+5H^3+6H^4+6H^5+3H^6
%+ 0 1 2        2         2        2         1
%= H^1+4H^2+7H^3+8H^4+8H^5+4H^6
%
%n=8: 2H^2+5H^3+6H^4+7H^5+8H^6+4H^7
%+ 0 1   2       2         2        2         2        1
%= 1   4    7     8    9    10   5
n=7:\qquad&q^6-4q^5+7q^4-8q^3+8q^2-4q&&\frac{1}{q}-\frac{3}{q^2}+\frac{4}{q^3}-\frac{4}{q^4}+\frac{4}{q^5}\\
n=8:\qquad&q^7-4q^6+7q^5-8q^4+9q^3-10q^2+4q\quad&&\frac{1}{q}-\frac{3}{q^2}+\frac{4}{q^3}-\frac{4}{q^4}+\frac{5}{q^5}-\frac{5}{q^6}\\
%n=9:\qquad&q^8-4q^7+7q^6-8q^5+9q^4-12q^3+12q^2-5q&&\frac{1}{q}-\frac{3}{q^2}+\frac{4}{q^3}-\frac{4}{q^4}+\frac{5}{q^5}-\frac{7}{q^6}+\frac{5}{q^7}\\
\end{align*}
We see that  these formulas are converging term-by-term to 
\begin{equation}
\label{eq:intro:limitqe}
q^{n-1}-4q^{n-2}+7q^{n-3}-8q^{n-4}+\cdots\qquad\quad\text{and}\quad\qquad\frac{1}{q}-\frac{3}{q^2}+\frac{4}{q^3}-\frac{4}{q^4}+\cdots,
\end{equation} as claimed in (3). Just as the stabilization of simple point-counts was explained by homological stability, the term-by-term stabilization of these statistics corresponds to a \emph{stabilization of twisted cohomology}:
\begin{align*}
H^1(\Conf_n(\C);\bwedge^2 \Q^n)&\ =\ \Q \ \text{ for all }n\geq 4\\
H^2(\Conf_n(\C);\bwedge^2 \Q^n)&\ =\ \Q^4 \text{ for all }n\geq 5\\
H^3(\Conf_n(\C);\bwedge^2 \Q^n)&\ =\ \Q^7 \text{ for all }n\geq 6\\
H^4(\Conf_n(\C);\bwedge^2 \Q^n)&\ =\ \Q^8 \text{ for all }n\geq 7,\ \  \text{ and so on.}
\end{align*} 
We can also approach this connection from the other direction, as we do in Section~\ref{sec:Lfunctions}: the formula \eqref{eq:intro:limitqe}  can be proved directly via analytic number theory, which then yields a \emph{proof} that the stable twisted cohomology is as we've claimed here. 

\para{Representation stability} There remains one final layer to uncover. The quadratic excess and number of linear factors are not the only statistics that stabilize as $n\to \infty$. In fact, \emph{any} statistic built as a polynomial in the counts of the numbers of factors of various degrees will stabilize in the same way. On the topological side, this means that the twisted cohomology of $\Conf_n(\C)$ must stabilize not just for the coefficient systems $\Q^n$ and $\bwedge^2 \Q^n$, but for $\bwedge^k \Q^n$, $\Sym^k \Q^n$, and many other natural sequences of coefficient systems. What is the underlying explanation?

Given a squarefree polynomial $f(T)\in \Conf_n(\C)$, the $n$-element set of roots $R(f)=\{\lambda\in \C\, | f(\lambda)=0\}$ varies continuously, describing an $n$-sheeted cover of $\Conf_n(\C)$. This cover is not normal, and its Galois closure is an $S_n$-cover of $\Conf_n(\C)$. The resulting $S_n$-cover is the hyperplane complement
\[\PConf_n(\C)=\big\{(\lambda_1,\ldots,\lambda_n)\,\big\vert\, \lambda_i\in \C,\ \lambda_i\neq \lambda_j\big\}\]
covering $\Conf_n(\C)$ by sending $(\lambda_1,\ldots,\lambda_n)$ to the polynomial $f(T)=(T-\lambda_1)\cdots(T-\lambda_n)$ with those roots. Lifting $f(T)$ to $(\lambda_1,\ldots,\lambda_n)\in \PConf_n(\C)$ amounts to choosing an ordering of the roots, and the deck group $S_n$ acts on $\PConf_n(\C)$  by permuting the ordering.

\medskip
When pulled back to the cover $\PConf_n(\C)$, the twisted coefficient systems $\Q^n$ and $\bwedge^2 \Q^n$ become trivial vector bundles with a nontrivial action of the Galois group $S_n$, i.e.\ \emph{representations} of the group $S_n$.
The rational cohomology $H^i(\PConf_n(\C))$ is also a representation of $S_n$ via the action of the deck group, and the transfer map for the finite cover $\PConf_n(\C)\to \Conf_n(\C)$ gives natural isomorphisms

\begin{align*}
H^i(\Conf_n(\C);\Q^n)\ \ \ \ \ \ &\approx\ \  H^i(\PConf_n(\C))\ \otimes_{S_n}\  \Q^n\\
H^i(\Conf_n(\C);\bwedge^2\Q^n)\ \ &\approx\ \  H^i(\PConf_n(\C))\ \otimes_{S_n}\  \bwedge^2\Q^n\\
H^i(\Conf_n(\C);V_n)\ \ \ \ \,\ \ &\approx\ \  H^i(\PConf_n(\C))\ \otimes_{S_n}\  V_n
\end{align*} 
Every $S_n$-representation is self-dual (since every element $\sigma\in S_n$ is conjugate to its inverse), so the dimension of such a tensor product is the inner product of $S_n$-characters 
\[\dim(V\otimes_{S_n} W)=\dim(\Hom_{S_n}(V,W))=\langle \chi_V,\chi_W\rangle_{S_n}.\]
We think of this inner product as the ``multiplicity of $W$ in $V$'', as this is the case when $W$ is irreducible.
 Therefore the stabilization of  twisted cohomology of $\Conf_n(\C)$ that explains formula (1)  amounts to the statement that for each $i\geq 0$ the multiplicity of $\Q^n$ in the $S_n$-representation $H^i(\PConf_n(\C))$ is eventually constant. Similarly, the stabilization of formula (2) means the multiplicity of $\bwedge^2 \Q^n$ in $H^i(\PConf_n(\C))$ is eventually constant, and so on.
This property of $H^i(\PConf_n(\C))$, that the multiplicity of natural families of representations is eventually constant, is precisely the \emph{representation stability} introduced and proved in Church--Farb~\cite{CF}.

\para{Character polynomials and FI-modules} What makes a family of $S_n$-representations $W_n$ ``natural'' in this way? There are many possible answers, but for us we ask that their characters are given by a single polynomial $P$ simultaneously for all $n$. For example, we saw earlier that the character of $\Q^n$ is given by $\chi_{X_1}$ for all $n\geq 1$, and the character of $\bwedge^2 \Q^n$ is given by $\chi_{\binom{X_1}{2}-X_2}$ for all $n\geq 1$. Therefore the multiplicities that we are interested in will be inner products of the form $\langle \chi_P,H^i(\PConf_n(\C))\rangle_{S_n}$.

%Since the cycle decomposition of $\sigma\in S_n$ is invariant under conjugacy, the number of $i$-cycles in $\sigma$ defines a class function on $S_n$. For any $i\geq 1$, we let $X_i$ denote this class function. Extending linearly, any polynomial $P\in\Q[X_1,\ldots ,X_d]$ determines a class function $\chi_P$ on $S_n$. For example, \[P=X_1^2-X_2\quad \leadsto\quad 
%\chi_P(\sigma)=(\text{\# fixed points of }\sigma)^2-\text{\# transpositions of }\sigma\] We call $P$, or sometimes $\chi_P$, a {\em character polynomial} (see \S\ref{section:stabilization}). A key feature is that a single polynomial $P$ determines characters $\chi_P\colon S_n\to \Q$ for all $n$ simultaneously, and many natural families of $S_n$-representations have characters given by a single character polynomial. For example, the character of $\Q^n$ is given by $P=X_1$ for all $n\geq 1$, and the character of $\bwedge^2 \Q^n$ is given by $P=\binom{X_1}{2}-X_2$.
%
%If $W$ is an $S_n$-representation, let us write $\langle P,W\rangle_{S_n}$ for the inner product $\langle \chi_P,\chi_W\rangle_{S_n}$. The multiplicities of $\Q^n$ and $\bwedge^2 \Q^n$ that we are interested in are therefore given by inner products such as $\langle X_1, H^i(\PConf_n(\C))\rangle_{S_n}$ or $\langle \binom{X_1}{2}-X_2, H^i(\PConf_n(\C))\rangle_{S_n}$.

Analyzing how  multiplicities $\langle \chi_P,V_n\rangle_{S_n}$ change as $n\to \infty$ is one of the main reasons that we introduced \emph{FI-modules} in \cite{CEF}. An FI-module bundles a sequence of $S_n$-representations such as $V_n=H^i(\PConf_n(\C))$ into a single mathematical object $V=H^i(\PConf(\C))$, in such a way that representation stability for $V_n$ is equivalent to finite generation for $V$. One of the main theorems of \cite{CEF} states that for any fixed character polynomial $P$ and any finitely-generated FI-module $V$, the inner products $\langle \chi_P,V_n\rangle_{S_n}$ are eventually constant.

 We have finally found the common cause underlying the stabilization of the combinatorial formulas in (2) and (3): it is the fact \cite[Theorem~4.7]{CEF} that $H^i(\PConf(\C))$ is a finitely-generated FI-module. And just as the formulas in (2) and (3) converged to a fixed power series as $n\to\infty$, the same will be true for \emph{any} polynomial statistic.
 
\para{Error bounds and stable range} Theorem~\ref{theorem:general} states that  the normalized statistic $q^{-n}\sum_{f(T)\in \Conf_n(\F_q)} P(f)$ converges to a limit $L$, but  says nothing about how \emph{fast} this statistic converges to the limit. It turns out that bounding the error term of this convergence is closely related to the question of a \emph{stable range} for representation stability, as we briefly explain. We prove Theorem~\ref{theorem:general} by first establishing the exact formula on the left (Proposition~\ref{pr:exactcount}), and then proving that it converges to the limit $L$ on the right.
\begin{equation}
\label{eq:power}
q^{-n}\!\!\!\!\!\sum_{f(T)\in \Conf_n(\F_q)}\!\!\!\!\!\!\!\! P(f)=\sum_{i=0}^n \frac{\langle\chi_P,H^i(\PConf_n(\C))\rangle_{S_n}}{(-q)^{i}} \underset{n\to \infty}\longrightarrow \sum_{i=0}^\infty \frac{
\langle \chi_P,H^i(\PConf(\C))\rangle}{(-q)^{i}}=L
\end{equation} In Sections~\ref{s:polynomials} and \ref{s:purebraid} we will find two obstacles governing the speed of this convergence. First, we must eliminate the possibility that the inner products $\langle \chi_P,{H^i(\PConf_n(\C))}\rangle_{S_n}$ could grow exponentially in $i$; without this, the series defining $L$ may not even be convergent! For general hyperplane complements this is a real obstacle, but for $\PConf_n(\C)$ we will be able to bound these inner products using known results. Once this is dealt with, we still need to know how large $n$ must be take before $\langle \chi_P,H^i(\PConf_n(\C))\rangle_{S_n}$ stabilizes to the limiting value $\langle \chi_P,H^i(\PConf(\C))\rangle$.

In general, the range $n\geq N_i$ for which some cohomology group $H^i$ is equal to its limiting value is known as the stable range; if there exist $K$ and $C$ so that $H^i$ stabilizes for $n\geq K\cdot i+C$, we say the problem has a linear stable range. Define the \emph{degree} of $P\in \Q[X_1,X_2,\ldots]$ as usual, except that $\deg X_k=k$. From \cite[Theorem~4.8]{CEF} and Proposition~\ref{pr:charpolyexpect},  one can show that $\langle \chi_P,H^i(\PConf_n(\C))\rangle_{S_n}$ is constant for all $n >= 2i+\deg P$. One can then deduce from \eqref{eq:power} that \[q^{-n}\sum_{f(T)\in \Conf_n(\F_q)} P(f)=L+O(q^{(\deg P-n)/2})=L+O(q^{-n/2}).\] 
Such a bound on the error term, of the form $L+O(q^{-\epsilon n})$, is called a \emph{power-saving bound}. This discussion shows that any \emph{linear stable range} $n\geq Ki + C$ for $\langle \chi_P,H^i(\PConf_n(\C))\rangle_{S_n}$ implies a power-saving bound for the count $q^{-n}\sum P(f)$, with $\epsilon=\frac{1}{K}$.

\para{Acknowledgments}
The authors thank Brian Conrad, Andrew Granville, Michael Lugo, and Akshay Venkatesh for useful conversations on the subject matter of the paper.  The first author was supported by NSF grant DMS-1103807, the second author was supported by NSF grant DMS-1101267, and the third author was supported by NSF grant DMS-1105643.

\section{The twisted Grothendieck--Lefschetz Formula}
\label{section:GL}

The Grothendieck--Lefschetz formula is a device that relates the topology of algebraic varieties over the complex numbers to the number of points of varieties over finite fields, thus providing a surprising bridge between topology and arithmetic.  The goal of this section is to give an introduction to the Grothendieck--Lefschetz formula by working it out explicitly in a few basic examples.     

\subsection{Background on Grothendieck--Lefschetz}

\para{$X(\F_q)$ as set of fixed points} We begin with a variety $X$ defined over the finite field $\F_q$.  The main arithmetic invariant of $X$ is its number of $\F_q$-points, $|X(\F_q)|$.   It is a fundamental observation that one can realize $X(\F_q)$ as the fixed points of a dynamical system as follows. Since $X$ is defined over $\F_q$, we have the geometric Frobenius morphism $\Frob_q\colon X\to X$, which acts (in an affine chart) on the coordinates by $x\mapsto x^q$. If $\Fqbar$ is the algebraic closure of $\F_q$, the morphism $\Frob_q$ acts on the set $X(\Fqbar)$ of $\Fqbar$-points of $X$. A point $x\in X(\Fqbar)$ will be fixed by $\Frob_q$ if all of its coordinates are fixed by $x\mapsto x^q$, i.e.\ if all its coordinates lie in $\F_q$, i.e.\ if $x$ lies in $X(\F_q)$. Therefore we have
\[X(\F_q)=\Fix\big(\Frob_q\colon X(\Fqbar)\to X(\Fqbar)\big).\]

\para{Grothendieck--Lefschetz formula} For an endomorphism $f\colon Y\to Y$ of a compact topological space $Y$, the classical Lefschetz fixed point formula lets us count the fixed points of $f$ in terms of the induced action of $f^\ast$ on 
the cohomology $H^\ast(Y;\Q)$. Specifically, it says that for nice maps $f$,
\begin{equation}
\label{eq:compactLefschetz}
\#\Fix(f\colon Y\to Y)=\sum_{i\geq 0}(-1)^i \tr\big(\Frob_q:H^i(Y;\Q)\big).
\end{equation}
%We can even do this with twisted coefficients, in which case the Lefschetz formula relates a sum of local factors over the fixed points of $f$ with the global action of $f^*$ on $H^\ast(Y;\FF)$.

Grothendieck's great insight was that the fixed points of $\Frob_q$ could be analyzed in the same fashion.
Since $X(\F_q)$ is finite and $X(\Fqbar)$ is totally disconnected in the standard topology, it might seem strange to talk about the ``topology" of $X$. But Grothendieck showed that the variety $X$ in fact has a cohomology theory with many of the familiar properties of the classical theory, called the \emph{\'etale} cohomology $H^i_{\et}(X;\Q_\ell)$. (For experts, we write $H^i_{\et}(X;\Q_\ell)$ as an abbreviation for $H^i_{\et}(X_{/\Fqbar};\Q_\ell)$, the \'{e}tale cohomology of the base change $X_{\Fqbar}$ of $X$ to $\Fqbar$. As always, we fix $\ell$ prime to $q$.) For readers unfamiliar with this definition, it can be taken as a black-box; \'etale cohomology should be thought of as an analogue of singular cohomology which can be defined purely algebraically. 

The key consequence is the Grothendieck--Lefschetz fixed point theorem, which relates the fixed points of a morphism $f\colon X\to X$ with its action on the \'etale cohomology $H^i_{\et}(X;\Q_\ell)$, exactly in accordance with the usual Lefschetz fixed point formula. Applying this to the Frobenius morphism $\Frob_q$ gives the following fundamental formula, which holds for any smooth projective variety $X$ over $\F_q$:
\begin{equation}
\label{eq:GL1}
\left|X(\F_q)\right|=\#\Fix(\Frob_q) = \sum_{i\geq 0}(-1)^i\tr\big(\Frob_q:H^i_{\et}(X;\Q_\ell)\big)
\end{equation}

%Let $X$ be any algebraic variety defined over $\Z$.   Let $\Fqbar$ denote the algebraic closure of $\F_q$, and let $\Frob_q\colon \Fqbar \to\Fqbar$ denote the Frobenius.  The map $\Frob_q$ induces an endomorphism 
%\[\Frob_q\colon X(\Fqbar)\to X(\Fqbar).\]
%By Fermat's Little Theorem, $\Fix(\Frob_q)=X(\F_q)$, and more generally
%\[\Fix(\Frob_q^r)=X(\F_{q^r})\ \ \ \text{for each\ }r\geq 1\]

%Under suitable conditions on $X$, the {\em Grothendieck--Lefschetz formula} tells us that we can 
%apply the Lefschetz Fixed Point Theorem to $\Frob_q\colon X(\Fqbar)\to X(\Fqbar)$, replacing usual cohomology by \'{e}tale cohomology $H^i_{\et}(X;\Q_\ell)$ and with each fixed point having index $1$; in other words
%\begin{equation}
%\label{eq:GL1}
%\#\Fix(\Frob_q) = \sum_{i\geq 0}(-1)^i
%\tr[\Frob_q:H^i_{\et}(X;\Q_\ell)\to H^i_{\et}(X;\Q_\ell)]
%\end{equation}

Of course \eqref{eq:GL1} is only as good as our ability to compute $H^i_{\et}(X;\Q_\ell)$ and the 
trace of Frobenius on it. For general varieties $X$ this can be very difficult: indeed the last part of the Weil Conjectures to be proved was a bound on the eigenvalues of $\Frob_q$ when $X$ is smooth. Even after the formula \eqref{eq:GL1} was established, it was almost a decade before this bound was proved by Deligne, completing the proof of the Weil conjectures. Fortunately, the varieties considered in this paper are extremely special, and in particular $\Frob_q$ will always act on $H^i_{\et}(X;\Q_\ell)$ by a specific power of $q$. This reduces the computation of $\tr(\Frob_q:H^i_{\et}(X;\Q_\ell))$ to determining the dimension of $H^i_{\et}(X;\Q_\ell)$. We will do this by comparing it with the Betti numbers of a manifold, where we can apply tools of topology.

\para{Comparison between $\Fqbar$ and $\C$} If $X$ is defined over $\Z$ or $\Z_p$, we can reduce $X$ modulo $p$ to obtain a variety over $\F_p$; this puts us in the above situation, so we can study $X(\F_p)$ via the \'etale cohomology $H^i_{\et}(X;\Q_\ell)$. On the other hand, by extending the scalars from $\Z$ to $\C$, we can look at the complex points $X(\C)$, which is a compact complex manifold (possibly with singularities, if $X$ is not smooth). 
This leads (in a nontrivial way) to a \emph{comparison map}
\begin{equation}
\label{eq:comparisonmap}
c_X\colon H^i_{\et}(X_{/\overline{\F}_p};\Q_\ell)\to H^i(X(\C);\Q_\ell).
\end{equation}
%When $X$ is smooth and projective, \eqref{eq:comparisonmap} lets us compare the \'etale cohomology of $X$ over $\overline{\F}_p$ with the usual singular cohomology of the compact complex manifold $X(\C)$.
Artin's comparison theorem states that under favorable conditions, the comparison map $c_X$ is an isomorphism.

%For now all we will need is the following consequence:
%\bigskip
%
%\noindent
%{\bf Comparison Theorem (Artin): }$ \dim_{\Q_\ell}H^i_{\et}(X;\Q_\ell)=\dim_\C H^i(X(\C);\C)$.
%\bigskip
%
%The most difficult part of applying  the Grothendieck--Lefschetz fixed point theorem is the determination of the trace of Frobenius.    This was accomplished by Deligne for many $X$ in his proof of the Weil Conjectures.    In the cases of interest here, we will be able to quote the pertinent theorems that will give this value.  With these theorems in hand, the Grothendieck--Lefschetz formula \eqref{eq:GL1} can be used to compute.

\para{Non-compact varieties}
For a non-compact space $Y$, we know that the Lefschetz formula does not hold as stated in \eqref{eq:compactLefschetz}; we must look instead at the compactly-supported cohomology $H^i_c(Y;\Q)$. Similarly, if $X$ is not projective we should replace $H^i_{\et}(X;\Q_\ell)$ in \eqref{eq:GL1} by the ``compactly-supported \'etale cohomology''. But when $X$ is smooth we can skirt this issue by using Poincar\'{e} duality, obtaining the following formula valid for any smooth $X$:
\begin{equation}
\label{eq:GLcorrect}
\left|X(\F_q)\right|=\#\Fix(\Frob_q) = q^{\dim X}\sum_{i\geq 0}(-1)^i\tr\big(\Frob_q:H^i_{\et}(X;\Q_\ell)^\vee\big)
\end{equation}

\para{Example (Squarefree polynomials over \boldmath$\F_q$)} The number of monic squarefree degree-$n$ polynomials over $\F_q$ is well-known to be $q^n-q^{n-1}$. As a first warmup, we will describe how this computation can be derived from the Grothendieck--Lefschetz formula \eqref{eq:GLcorrect}.

In the introduction, we introduced the space $\Conf_n$ of monic squarefree degree-$n$ polynomials. Let us be more precise about what this means. We can identify the space $D_n$ of monic degree-$n$ polynomials with $\A^n$ by $T^n+a_1T^{n-1}+\cdots+a_n\leftrightarrow (a_1,\ldots,a_n)$. The condition for a polynomial $f(T)$ to be  squarefree is the non-vanishing of its \emph{discriminant} $\Delta(f)$. Since the discriminant $\Delta(f)$ is given by an integral polynomial in the coefficients $a_1,\ldots,a_n$, the complement $\Conf_n\coloneq \A^n-\{\Delta=0\}$ is a quasiprojective variety, and in fact a smooth scheme over $\Z$.

To count the number ${\#} \Conf_n(\F_q)$ of monic squarefree degree $n$ polynomials over $\F_q$, we will use the Grothendieck--Lefschetz formula \eqref{eq:GLcorrect}. We will show below at the end of the proof of Theorem~\ref{th:chi} that $\Frob_q$ acts on $H^i_{\et}(\Conf_n;\Q_\ell)$ by multiplication by $q^{i}$ (see also the discussion following Proposition~\ref{pr:kim}). Therefore it remains to compute the dimensions of $H^i_{\et}(\Conf_n;\Q_\ell)$.

Over an algebraically closed field such as $\C$, we can identify  $\Conf_n(\C)$ with the ``configuration space'' parametrizing $n$-element subsets of $\C$ (whence the name). This identification sends the squarefree polynomial $f(T)\in \Conf_n(\C)$ to its set of roots.
Using this description of $\Conf_n(\C)$, Arnol'd \cite{Arnold} proved that for $n\geq 2$: 
\[
H^i(\Conf_n(\C);\C)= 
\begin{cases}
%\left\{
%\begin{array}{ll}
\C & i=0\\
\C & i=1\\
0& i\geq 2
\end{cases}
%\end{array}\right.
\]
By Artin's Comparison Theorem, this implies that $H^0_{\et}(\Conf_n;\Q_\ell)=H^1_{\et}(\Conf_n;\Q_\ell)=\Q_\ell$, while $H^i_{\et}(\Conf_n;\Q_\ell)=0$ for $i\geq 2$. Since $\Frob_q$ acts on $H^i_{\et}(\Conf_n;\Q_\ell)$ by $q^i$, and thus on $H^i_{\et}(\Conf_n;\Q_\ell)^\vee$ by $q^{-i}$, we conclude that 
\[
\tr\big(\Frob_q: H^i_{\et}(\Conf_n;\Q_\ell)^\vee\big)
=
\begin{cases}
1 & i=0\\
q^{-1}&i=1\\
0& i\geq 2
\end{cases}
\]
$\Conf_n$ is $n$-dimensional, being an open subvariety of $\A^n$,  so the Grothendieck--Lefschetz formula \eqref{eq:GLcorrect} gives
\begin{align*}
\# \Conf_n(\F_q)&= \# \Fix[\Frob_q\colon \Conf_n(\Fqbar)\to\Conf_n(\Fqbar)]\\
&= q^n\sum_{i\geq 0}(-1)^i
\tr(\Frob_q:H^i_{\et}(\Conf_n;\Q_\ell)^\vee)\\
&= q^n\big((\tr(\Frob_q:H^0_{\et}(\Conf_n;\Q_\ell)^\vee) - \tr(\Frob_q:H^1_{\et}(\Conf_n;\Q_\ell)^\vee)\big)\\
&= q^n(1-q^{-1})=q^n-q^{n-1}
\end{align*}
This agrees with the well-known value of ${\#} \Conf_n(\F_q)$.

\subsection{Twisted coefficients} 
\label{subsec:twisted}
Much more subtle counts of $\F_q$-points can be obtained by using a version of Grothendieck--Lefschetz with twisted coefficients. For any appropriate system of coefficents  $\FF$ on a smooth projective variety $X$ defined over $\F_q$ (namely a so-called $\ell$-adic sheaf), we have a version of \eqref{eq:GL1} with coefficients in $\FF$:
\begin{equation}
\label{eq:GLtwisted}
\sum_{x\in X(\F_q)}\tr\big(\Frob_q | \FF_x)
=\sum_i(-1)^i\tr\big(\Frob_q: H^i_{\et}(X;\FF)\big)
\end{equation}
If $\Frob_q$ fixes a point $x\in X(\F_q)$, it acts on the stalk $\FF_x$ of $\FF$ at $x$, and the local contributions on the left side of \eqref{eq:GLtwisted} are the trace of $\Frob_q$ on each of these stalks. 
On the right side we have the \'etale cohomology of $X_{/\F_q}$ with coefficients in $\FF$, which may  again be taken as a black box. For non-projective $X$ we may again correct the formula \eqref{eq:GLtwisted}, either by considering the compactly-supported version of $H^i_{\et}(X;\FF)$, or via Poincar\'{e} duality.

In the remainder of this section we give an example of how the formula \eqref{eq:GLtwisted} may be applied to
$\Conf_n$. First, we describe the space $\PConf_n$, and how $\PConf_n$ arises as a covering space of $\Conf_n$.

\para{\boldmath$\PConf_n$ as a cover of \boldmath$\Conf_n$}
Recall that $D_n$ is the space of monic squarefree degree-$n$ polynomials, and consider the map 
\[\pi\colon \A^n\to D_n\]
defined by 
\[\pi\colon (x_1,\ldots,x_n)\mapsto f(T)= (T-x_1)\cdots(T-x_n).\]
Since $\pi$ is invariant under permutation of the coordinates $x_i$,  it factors through the quotient $\A^n/S_n$.  In fact $\pi$ induces an isomorphism $\A^n/S_n\to D_n$, as follows.  The $S_n$-invariant functions on $\A^n$ form the ring of symmetric polynomials $\Z[x_1,\ldots,x_n]^{S_n}$.  As a function of the roots $x_j$, the coefficient $a_i$ is $\pm$ the $i$th elementary symmetric function $e_i(x_1,\ldots,x_n)$. 
The fundamental theorem of symmetric polynomials states that \[\Z[x_1,\ldots,x_n]^{S_n}=\Z[e_1,\ldots,e_n]=\Z[a_1,\ldots,a_n],\] giving the isomorphism $\A^n/S_n\to D_n$.

Under this map $\pi$, what is the preimage of $\Conf_n=D_n-\{\Delta=0\}$? The discriminant $\Delta(f)$ vanishes exactly when $f$ has a repeated root, so the preimage of $\{\Delta=0\}$ will be the set of all $(x_1,\ldots,x_n)\in \A^n$ for which two coordinates $x_i$ and $x_j$ coincide. In other words, if we define $\PConf_n\coloneq \A^n-\{x_i=x_j\}$, then $\PConf_n$ is the preimage of $\Conf_n$ under $\pi$. Since $\PConf_n$ is defined in $\A^n$ by the nonvanishing of integral polynomials, $\PConf_n$ is a smooth $n$-dimensional scheme over $\Z$.

Since $S_n$ acts freely on $\PConf_n$ (by definition!), restricting $\pi$ to a map $\PConf_n\to \Conf_n$ gives an \'etale Galois cover with Galois group $S_n$ (acting on $\PConf_n$ by permuting the coordinates). On the topological side, $\PConf_n(\C)$ is the cover of $\Conf_n(\C)$ 
corresponding to the kernel of the representation $\pi_1(\Conf_n(\C))\to S_n$ sending a loop of configurations to the permutation it induces on the $n$ points.

\begin{numberedremarknotitalicized}
\label{rem:quotient}
The discussion above shows that $\Conf_n$ is the quotient $\Conf_n\approx \PConf_n/S_n$ as an algebraic variety. However for readers not familiar with scheme-theoretic quotients, we emphasize that the $k$-points $\Conf_n(k)$ are \emph{not} just the quotient $\PConf_n(k)/S_n$ of the $k$-points of $\PConf_n$.

For an algebraically closed field $\kbar$, there is no discrepancy: $\PConf_n(\kbar)$ parametrizes ordered $n$-tuples $(x_1,\ldots,x_n)$ of elements $x_i\in \kbar$ and $\Conf_n(\kbar)$ parametrizes unordered $n$-element sets $\{x_1,\ldots,x_n\}\subset \kbar$, exactly as one would expect.

The difference arises for fields $k$ that are not algebraically closed. For example, the polynomial $T^2+1\in \R[T]$ is squarefree, so it defines a point in $\Conf_2(\R)$. However, this polynomial is not in the image of any $(x_1,x_2)\in \PConf_2(\R)$, since we cannot write $T^2+1=(T-x_1)(T-x_2)$ for any $x_1,x_2\in \R$. For a more drastic example, the $\F_p$-points $\Conf_n(\F_p)$ parametrize monic squarefree degree $n$ polynomials in $\F_p[T]$, and this set is always nonempty. (For example, either $T^n-T\in \F_p[T]$ or $T^n-1\in \F_p[T]$ is always squarefree, depending on whether $p|n$ or not.) But when $n>p$ the set $\PConf_n(\F_p)$ is empty, since choosing more than $p$ distinct elements of $\F_p$ is impossible!

We can understand $\Conf_n(k)$ by relating it to $\Conf_n(\overline{k})$, as the subset of points that are defined over $k$. The catch is to describe what it means for a set $\{x_1,\ldots,x_n\}\subset \overline{k}$ to be ``defined over $k$'': it means not that \emph{each} $x_i$ lies in $k$ (i.e.\ is invariant under $\Gal(\overline{k}/k)$), but that the \emph{set} $\{x_1,\ldots,x_n\}$ is invariant under $\Gal(\overline{k}/k)$. For example, the set $\{i,-i\}$ is invariant under $\Gal(\C/\R)$ (i.e.\ by complex conjugation), so the corresponding point in $\Conf_2(\C)$ should give a point in $\Conf_2(\R)$\,---\,and indeed it does, namely the polynomial $T^2+1\in \R[T]$.

We can see this concretely for a squarefree polynomial $f(T)\in \Conf_n(\F_q)$. Since $f(T)$ is squarefree, it must have $n$ distinct roots $\lambda_1,\ldots,\lambda_n$ in $\Fqbar$. But since $f(T)$ has coefficients in $\F_q$, it is invariant under $\Frob_q$ (which topologically generates $\Gal(\Fqbar/\F_q)$), so the \emph{set} of roots $\{\lambda_1,\ldots,\lambda_n\}$ must be invariant under $\Frob_q$. Conversely, given any set $\{x_1,\ldots,x_n\}\subset \Fqbar$ which is taken to itself by $\Frob_q$, the polynomial $f(T)=(T-x_1)\cdots(T-x_n)$ is fixed by $\Frob_q$, and thus has coefficients in $\F_q$.

\end{numberedremarknotitalicized}

\para{Twisted statistics for $\Conf_n$} Using the surjection $\pi_1(\Conf_n(\C))\to S_n$, any finite-dimensional complex representation $V$ of $S_n$ lets us build a vector bundle (with flat connection) over $\Conf_n(C)$, so that its \emph{monodromy representation} is the composition $\pi_1(\Conf_n(\C))\to S_n\to \GL(V)$. Since the cover $\PConf_n(\C)\to \Conf_n(\C)$ corresponds to the kernel of this surjection, any bundle we build in this way will become trivial if we pull it back to $\PConf_n(\C)$.

The same constructions can be done in the algebraic setting: the Galois $S_n$-cover $\PConf_n\to \Conf_n$ gives a natural corrrespondence between finite-dimensional representations of $S_n$ and finite-dimensional local systems (locally constant sheaves) on $\Conf_n$ that become trivial when restricted to $\PConf_n$. Given a representation $V$ of $S_n$, let $\chi_V$ be its character, and let $\V$  denote the corresponding local system on $\Conf_n$. (Since every irreducible representation of $S_n$ can be defined over $\Z$, the local system $\V$ determines an
$\ell$--adic sheaf. We will not stress this point further, although it is important.)

%Now let $V$ be any finite-dimensional complex representation of $S_n$.  Such a representation gives a {\em local system}, that is a locally constant sheaf, on the scheme $\Conf_n$; by abuse of notation we denote this local system by $V$.   
%Attached to such a system is its {\em monodromy representation} 
%\[\rho:\pi_1(X(\C))\to S_n \to \GL(V).\] 
%
%Since every irreducible representation of $S_n$ is defined over $\Z$ (as the image of the Young
%symmetrizer $c_\lambda\in \Z S_n$), it can be viewed as an
%$\ell$--adic sheaf. We obtain the \'etale cohomology over $\Q_\ell$,
%which by a further abuse of notation we denote simply by
%$H^i(\Conf_n;V(\lambda))$.

If $f=f(T)\in \Conf_n(\F_q)$ is a fixed point for the action of $\Frob_q$ on $\Conf_n(\Fqbar)$, then $\Frob_q$ acts on the stalk $\V_f$ over $f$.  This action can be described as follows.  The roots of $f(T)$ are permuted by the action of Frobenius on $\Fqbar$, which determines a permutation $\sigma_f\in S_n$ (defined up to
conjugacy). The stalk $\V_f$ is isomorphic to $V$, and
in some basis for $V$, the automorphism $\Frob_q$ acts according to the action of $\sigma_f$. (This is explained in more detail in the latter part of the next section.) It follows that:
\[\tr(\Frob_q:\V_f)=\chi_V(\sigma_f)\]

The  Grothendieck--Lefschetz formula \eqref{eq:GLtwisted} becomes in this case (via Poincar\'e duality) the equality
\begin{equation}
\label{eq:GL2}
\sum_{f\in \Conf_n(\F_q)}\tr\big(\Frob_q | \V_f)
=q^n\sum_i(-1)^i\tr\big(\Frob_q: H^i_{\et}(\Conf_n;\V)^\vee)
\end{equation}
As before, we will see that $\Frob_q$ acts on $H_{\et}^i(\Conf_n;\V)$
by $q^{i}$. It thus suffices to know the
dimension of the cohomology group
$H^i_{\et}({\Conf_n};\V)$.  We will prove below that this dimension can be computed as
\[\dim_{\Q_\ell}H^i_{\et}({\Conf_n};\V)= \big\langle V,H^i(\PConf_n(\C);\C)\big\rangle_{S_n}\]
Here $\langle V,W\rangle_{S_n}$ is the usual inner product of $S_n$-representations $V$ and $W$: \[\langle V,W\rangle=\dim_\C\Hom_{S_n}(V,W)\] % for example, when $V$ is irreducible this is just the multiplicity of $V$ in $W$.  \

Combining all these
observations, the Grothendieck--Lefschetz formula becomes the
following fundamental formula:
\begin{equation}
\label{eq:GL3}
\sum_{f\in \Conf_n(\F_q)}\chi_V(\sigma_f)=\sum_i(-1)^i
q^{n-i}\big\langle V,H^i(\PConf_n(\C);\C)\big\rangle_{S_n}
\end{equation}
We will prove a generalization of this formula as Theorem~\ref{th:chi}.

Note that when $V$ is the trivial representation we have $\chi_V(\sigma_f)=1$ for all $f$, and so \eqref{eq:GL3} reduces to the previous untwisted Grothendieck--Lefschetz formula \eqref{eq:GL1}.  Formula \eqref{eq:GL3} converts various counting problems about polynomials over $\F_q$ to the problem of understanding the decomposition of  $H^i(\PConf_n(\C);\C)$ as $S_n$-representations.   

\para{Example (counting linear factors)} 
Let $W=\C^n$ be the standard permutation representation of $S_n$.  Then $\chi_W(\sigma)$ is the number of fixed points of $\sigma$.   Over each fixed point $f=f(T)\in \Conf_n(\F_q)$, the roots fixed by the permutation $\sigma_f\in S_n$ (i.e.\ fixed by $\Frob_q$) are those lying in $\F_q$.  The set of such roots corresponds precisely to the set of linear factors of $P$.  We thus have
\[\chi_W(\sigma_f)= \ \text{the number of linear factors of $f(T)$}.\]
In Proposition~\ref{pr:standard} we will prove that for each $i\geq 1$ we have
  \[\big\langle W,H^i(\PConf_n(\C);\C)\big\rangle_{S_n} =\begin{cases}0&\text{ for }n\leq i\\1&\text{ for }n=i+1\\2&\text{ for }n\geq i+2\end{cases}\]
%

%In \S\ref{sec:standard} we will prove for each $n\geq 3$ that 
%\[\langle V(0),H^i(\Conf_n(\C);\C)\rangle=
%\left\{
%\begin{array}{ll}
%1 & i=0,1\\
%0 & i>1
%\end{array}
%\right.
%\]
%
%and 
%
%\[\langle V(1),H^i(\Conf_n(\C);\C)\rangle =
%\left\{
%\begin{array}{ll}
%1 & i=1,n-1\\
%2& 2\leq i\leq n-2\\
%0 & i\geq n
%\end{array}
%\right.
%\]

Applying \eqref{eq:GL3} thus gives that the total number of linear factors of all monic squarefree degree-$n$ polynomials over $\F_q$ equals
\begin{align*}
\sum_{f\in \Conf_n(\F_q)}\chi_W(\sigma_f)&=\sum_i(-1)^i
q^{n-i}\big\langle W,H^i(\PConf_n(\C);\C)\big\rangle_{S_n}\\
&=q^n - 2q^{n-1}+2q^{n-2}-2q^{n-3}+\cdots\mp 2q^3\pm 2q^2\mp q
\end{align*}

To obtain the \emph{expected} number of linear factors, we simply divide by the cardinality of $\Conf_n(\F_q)$. Since this was determined above to be $q^n-q^{n-1}$, we conclude that the expected number of linear factors is 
\[1-\frac{1}{q}+\frac{1}{q^2}-\frac{1}{q^3}+\cdots \pm \frac{1}{q^{n-2}}\]
This can be widely generalized: in fact for \emph{any} statistic $s(f)$ on polynomials $f(T)$ only depending on the lengths of the irreducible factors of $f$, we can find a representation $V$ (or a difference of two representations) that allows us to calculate $\sum_{f\in \Conf_n(\F_q)}s(f)$ via topology; see \S\ref{s:purebraid}.

\section{Hyperplane arrangements, their cohomology, and combinatorics of squarefree polynomials}
\label{s:polynomials}

In \cite{CEF} the theory of various ``FI-objects'' (e.g. FI-spaces, FI-varieties, FI-modules) was developed in order to better understand infinite sequences of such objects.   In this chapter we introduce the notion of  ``$\FI$-complement of hyperplane arrangement'', or ``FI-CHA'' for short.  These are, roughly,  complements of  those 
hyperplane arrangements that can be generated in a uniform way by a finite set of ``generating hyperplanes''.  Applying the cohomology or \'{e}tale cohomology functor will then give an FI-module (in the sense of \cite{CEF}, and defined below).  We can then deduce strong constraints on these \'{e}tale cohomology groups from the results of  \cite{CEF}.

We then explain the direct connection of these cohomology groups to moduli spaces of monic, squarefree polynomials, proving a general theorem that converts the stability inherent in FI-modules to 
the stability of  combinatorial statistics for squarefree polynomials over $\F_q$.   In Section~\ref{s:purebraid} we will apply this general theorem to obtain precise answers to a variety of counting problems in $\F_q[T]$.

\subsection{FI-hyperplane arrangements}

We briefly recall the basic definitions and notation for FI-modules from \cite{CEF}.  We denote by $\FI$ the category of finite sets with inclusions, and by $\FI^{op}$ its opposite category.  We denote the set $\set{1,\ldots,n}$ by $[n]$.  A functor from $\FI$ (respectively $\FI^{op}$) to the category of modules over a ring $A$ is called an {\em $\FI$-module} (respectively $\FI^{op}$-module) over $A$.  If $V$ is an $\FI$-module, we denote by $V_n$ the $A$-module $V([n])$. Since $\End_{\FI}([n])\simeq S_n$, each $A$-module $V_n$ has a natural action of $S_n$.

Let $R$ be a ring and let $L = \{L_1, \ldots, L_m\}$ be a finite set of nontrivial linear forms over $R$ in variables $x_1, \ldots, x_d$ containing the form $x_1-x_2$.    For each $n$, each $L_i$, and each injection $f\colon [d] \hookrightarrow [n]$ we have a linear form $L_i^f$ in $x_1, \ldots, x_n$ defined by
\beq
L_i^f(x_1, \ldots, x_n) = L_i(x_{f(1)}, \ldots, x_{f(d)}).
\eeq
(We could relax the condition that the forms $L_i$ involve the same number of variables, in which case $f$ would range over inclusions $f\colon [d_i]\hookrightarrow [n]$, but for readability we will stick to the simpler situation.) Each such form determines a hyperplane $H_i^f$ in affine $n$-space $\A^n_R$ defined by the linear equation $L_i^f=0$.
We define the \emph{hyperplane complement} $\AA(L)_n$ as the complement of this hyperplane arrangement: \[
\AA(L)_n\coloneq \A^n_R-\bigcup_{f,i} H_i^f\] Not all hyperplane arrangments can be built in this way from a ``generating set'' $L$, but many of the most familiar hyperplane arrangements can.  Some simple examples:
\begin{itemize}
\item  $L=\{x_1-x_2\}$. In this case we have only one form $L_1=x_1-x_2$, so an inclusion $f\colon [2]\hookrightarrow [n]$ determines the form $L_1^f=x_{f(1)}-x_{f(2)}$. As $f$ ranges over all inclusions, we obtain  the arrangment of hyperplanes $x_i - x_j=0$ for $i\neq j\in \{1,\ldots,n\}$, usually known as the \emph{braid arrangement}. The hyperplane complement $\AA(L)_n$  is the space
\[\AA(L)_n=\big\{(x_1,\ldots,x_n)\in \A^n_R\,\big|\, x_i\neq x_j \text{ for }i\neq j\big\}\] of $n$-tuples of {\em distinct} points in $\A^1$, which was denoted by $\PConf_n$ in \S\ref{subsec:twisted}.
\item  $L = \set{x_1-x_2, x_1+x_2, x_1}$. In this case we obtain the \emph{braid arrangement of type $B_n$}, consisting of the hyperplanes $x_i-x_j=0$ and $x_i+ x_j=0$ for $i\neq j\in \{1,\ldots,n\}$ together with the coordinate hyperplanes $x_i=0$ for $i\in \{1,\ldots,n\}$.  The hyperplane complement \[\AA(L)_n=\big\{(x_1,\ldots,x_n)\in \A^n_R\,\big|\, x_i\neq 0, x_i\neq x_j, x_i\neq -x_j\text{ for }i\neq j\big\}\] parametrizes $n$-tuples of  nonzero points in $\A^1$ disjoint from each other and from their negatives.
\item $L = \set{x_1 - x_2, x_1- 2x_2 + x_3}$. In this case the hyperplane complement \[\AA(L)_n=\big\{(x_1,\ldots,x_n)\in \A^n_R\,\big|\, x_i\neq x_j, x_i+x_j\neq 2x_k \text{ for }i\neq j\neq k\big\}\] parametrizes $n$-tuples of  distinct numbers in $R$ no three of which form an arithmetic progression.
\end{itemize}

For any $L$, the action of $S_n$ on $\A^n_R$ by permuting the coordinates permutes the set of hyperplanes $H_i^f$, and thus induces an action of $S_n$ on $\AA(L)_n$. But there are also maps between the different $\AA(L)_n$ for different $n$, and in fact the ensemble of all these schemes and maps forms an $\FI^{op}$-scheme.

\begin{prop}  There exists an functor $\AA(L)$ from $\FI^{op}$ to the category of schemes over $R$ which sends $[n]$ to $\AA(L)_n$ and sends $g\colon [m] \hookrightarrow [n]$ to the surjection $(x_1, \ldots, x_n)\mapsto (x_{g(1)}, \ldots, x_{g(m)})$. 
\end{prop}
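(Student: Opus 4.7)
The plan is to define, for each injection $g\colon [m] \hookrightarrow [n]$, a morphism $p_g\colon \AA(L)_n \to \AA(L)_m$ by restricting the linear projection $\A^n_R \to \A^m_R$ that sends $(x_1,\ldots,x_n)$ to $(x_{g(1)},\ldots,x_{g(m)})$, and then to verify the two things this requires: first, that the projection carries $\AA(L)_n$ into the open subscheme $\AA(L)_m$, and second, that the assignment $g \mapsto p_g$ is compatible with composition and sends identities to identities.

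The only step with real content is the first one. I would work with $S$-valued points for an arbitrary $R$-algebra $S$. A tuple $(x_1,\ldots,x_n) \in \AA(L)_n(S)$ is, by definition, one for which $L_i^h(x_1,\ldots,x_n)$ is invertible in $S$ for every $L_i \in L$ and every injection $h\colon [d] \hookrightarrow [n]$. I must check the analogous condition for the image: that $L_i^f(x_{g(1)},\ldots,x_{g(m)})$ is invertible in $S$ for every $L_i \in L$ and every injection $f\colon [d] \hookrightarrow [m]$. The key identity is
\[
L_i^f(x_{g(1)},\ldots,x_{g(m)}) = L_i(x_{g(f(1))},\ldots,x_{g(f(d))}) = L_i^{g \circ f}(x_1,\ldots,x_n),
\]
and since $g \circ f\colon [d] \hookrightarrow [n]$ is again an injection, the right-hand side is invertible by hypothesis. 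Scheme-theoretically, this amounts to the statement that the preimage of the hyperplane $H_i^f \subset \A^m_R$ under the projection is exactly $H_i^{g \circ f} \subset \A^n_R$, which has already been removed when forming $\AA(L)_n$.

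Functoriality is then immediate and formal: given $h\colon [k] \hookrightarrow [m]$ and $g\colon [m] \hookrightarrow [n]$, both $p_h \circ p_g$ and $p_{g \circ h}$ send $(x_1,\ldots,x_n)$ to $(x_{g(h(1))},\ldots,x_{g(h(k))})$, and $p_{\id_{[n]}}$ is manifestly the identity. I do not expect a genuine obstacle here: the entire substance of the proposition is the structural observation that the family $L$ is set up precisely so that the resulting arrangement is stable under reindexing by injections, and the proof simply unpacks this design feature. The description of $p_g$ as a ``surjection'' reflects the fact that the ambient affine projection $\A^n_R \to \A^m_R$ is surjective as a morphism of schemes; any geometric point of $\AA(L)_m$ lifts to $\AA(L)_n$ by choosing the remaining $n - m$ coordinates in general position to avoid the finitely many relevant linear conditions.
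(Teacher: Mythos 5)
Your proposal is correct and follows essentially the same route as the paper: the one substantive point is the identity $g^*(L_i^f) = L_i^{g\circ f}$, which shows the preimage of each hyperplane $H_i^f$ in $\A^m_R$ is again a hyperplane of the arrangement in $\A^n_R$, so the projection restricts to a morphism $\AA(L)_n\to\AA(L)_m$; functoriality is then formal. Your phrasing via $S$-valued points is just a harmless repackaging of the same computation.
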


We call $\AA(L)$ the FI-CHA {\em determined by $L$}; here ``FI-CHA'' is an abbreviation for ``$\FI$-complement of hyperplane arrangement''.

\begin{proof}  An injection $g\colon [m] \hookrightarrow [n]$ determines a surjection $g^*\colon \A^n\to \A^m$ which sends $(x_1, \ldots, x_n)$ to $(x_{g(1)}, \ldots, x_{g(m)})$, and this assignment is obviously functorial. Moreover the form $L_i^f\colon \A^n\to \A^1$ from the definition of $\AA(L)_n$ is just the composition of $f^*\colon \A^n\to \A^d$ with the original linear form $L_i\colon \A^d\to \A^1$. It follows that $g^*(L_i^f)=L_i^{g\circ f}$, so the preimage of $H_i^f$ under $g^*$ is the hyperplane $H_i^{g\circ f}$, demonstrating that $g^*$ restricts to a map $g^*\colon \AA(L)_n\to\AA(L)_d$.
\end{proof}

Composing the functor $\AA(L)$ with the contravariant functor  ``\'{e}tale cohomology''  thus gives the following.

\begin{cor} Let $\AA(L)$ be an FI-CHA.  The \'{e}tale cohomology groups $H^i_{\et}(\AA(L)_n;\Q_\ell)$ fit together into an $\FI$-module over $\Q_\ell$.  We denote this $\FI$-module by $H^i_{\et}(\AA(L);\Q_\ell)$.
\end{cor}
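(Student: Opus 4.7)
The plan is to observe that this corollary is a formal consequence of the preceding proposition combined with the functoriality of \'etale cohomology. The preceding proposition establishes that $\AA(L)$ is a contravariant functor from $\FI$ to schemes over $R$; that is, a covariant functor $\AA(L)\colon \FI^{\op}\to \mathrm{Sch}_R$. What I need to do is compose this with the \'etale cohomology functor, taking some care about the base over which we apply \'etale cohomology.

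First, I would fix the base: as in the convention set earlier in the paper, $H^i_{\et}(\AA(L)_n;\Q_\ell)$ is shorthand for $H^i_{\et}((\AA(L)_n)_{/\overline{K}};\Q_\ell)$, where $K=\mathrm{Frac}(R)$ (or any algebraic closure of a residue field of interest), and $\ell$ is invertible in $R$. Since base change commutes with the structure maps $g^*\colon \AA(L)_n\to \AA(L)_m$ coming from injections $g\colon [m]\hookrightarrow [n]$, the proposition still gives a functor $\FI^{\op}\to \mathrm{Sch}_{\overline{K}}$ after base change.

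Next I would invoke the standard fact that \'etale cohomology with $\Q_\ell$-coefficients is a contravariant functor $H^i_{\et}(-;\Q_\ell)\colon \mathrm{Sch}_{\overline{K}}\to \Q_\ell\dMod$: any morphism of schemes $h\colon X\to Y$ induces a pullback $h^*\colon H^i_{\et}(Y;\Q_\ell)\to H^i_{\et}(X;\Q_\ell)$, and these pullbacks satisfy $(h_1\circ h_2)^*=h_2^*\circ h_1^*$ and $\id^*=\id$. Composing the contravariant functor $H^i_{\et}(-;\Q_\ell)$ with the contravariant functor $\AA(L)$ yields a \emph{covariant} functor $\FI\to \Q_\ell\dMod$ sending $[n]$ to $H^i_{\et}(\AA(L)_n;\Q_\ell)$ and sending an injection $g\colon [m]\hookrightarrow [n]$ to the pullback $(g^*)^*$ along the map $g^*\colon \AA(L)_n\to \AA(L)_m$. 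By definition this composite functor is an $\FI$-module over $\Q_\ell$, and the induced $S_n$-action on $H^i_{\et}(\AA(L)_n;\Q_\ell)$ agrees with the one coming from $\mathrm{End}_{\FI}([n])\simeq S_n$.

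There is no real obstacle here: the corollary is a categorical composition, and the only technical point worth noting is that one must apply \'etale cohomology after base change to a geometric point (so that the natural $S_n$-action and Frobenius commute in the way needed for later sections), and that the $\ell$-adic formalism does give a genuine functor on morphisms of schemes, not merely on isomorphism classes. Once that is acknowledged, the result is immediate.
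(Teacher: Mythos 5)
Your argument is exactly the paper's: the corollary follows by composing the contravariant functor $\AA(L)\colon \FI^{\op}\to \mathrm{Sch}_R$ from the preceding proposition with the contravariant \'etale cohomology functor (after base change to an algebraic closure, per the paper's standing convention), yielding a covariant functor $\FI\to \Q_\ell\dMod$. Your added remarks on base change and functoriality on morphisms are correct but do not change the substance; the proposal matches the paper's proof.
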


%From now on, we restrict to the case $R=\bar{\F}_q$, the algebraic closure of a finite field.  The investigation of the \'{e}tale cohomology of hyperplane complements over $\Fqbar$ was initiated by Lehrer in ~\cite{lehrer:ladic}.  The main theme is that everything agrees beautifully with the classical description of the singular cohomology of complex hyperplane arrangement. 

%\para{\'Etale cohomology of hyperplane arrangements}
\subsection{\'Etale cohomology of hyperplane arrangements}

The following facts on the \'etale cohomology of the hyperplane complements $\AA(L)_n$,
 whose analogues for complex hyperplane arrangements are well-known, were proved over a general base field by Kim~\cite{kim:arrangements} (see also Lehrer~\cite{lehrer}). If $X$ is a variety over a field $k$, by $H^i_{\et}(X;\Q_\ell)$ we always mean the cohomology $H^i_{\et}(X_{/\overline{k}};\Q_\ell)$ of the base change $X_{\overline{k}}$.
  
Let $k$ be a field, and let $L\colon \A^n\to \A^1$ be a nontrivial $k$-linear form. If $H\subset \A^n$ is the hyperplane $L=0$, this form restricts to a map $L\colon \A^n-H\to \A^1-\{0\}$. The fibers of this map are $\A^{n-1}$, so on cohomology $L$ induces a isomorphism
\beq
L^*\colon H^1_{\et}(\A^1 - \{0\}; \Q_\ell)\stackrel{\approx}{\to}H^1_{\et}(\A^n -H).
\eeq

\begin{prop}  Let $k$ be a field, and fix a prime $\ell$ different from the characteristic of $k$. Given a finite set of hyperplanes $H_1,\ldots,H_m$ in $\A^n$ defined over $k$, let $\AA$ be the complement $\AA\coloneq \A^n-\bigcup H_j$.  Then:
\begin{enumerate}[(i)]
\item $H^1_{\et}(\AA;\Q_\ell)$ is spanned by the images of the $m$ maps
\beq
 H^1_{\et}(\A^n -H_j)\ra H^1_{\et}(\AA;\Q_\ell)
\eeq
induced by the inclusion of $\AA$ into $\A^n-H_j$ for $j=1,\ldots,m$.
\item $H^i_{\et}(\AA;\Q_\ell)$ is generated by $H^1(\AA;\Q_\ell)$ under cup product.
\end{enumerate}
\label{pr:kim}
\end{prop}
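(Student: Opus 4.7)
My plan is to prove (i) and (ii) simultaneously by induction on the number $m$ of hyperplanes, using the Gysin (localization) long exact sequence for the smooth pair $(\AA_{m-1}, Z)$, where $\AA_{m-1} \coloneq \A^n - \bigcup_{j<m} H_j$ and $Z \coloneq H_m \cap \AA_{m-1}$, with complement $\AA = \AA_m$. The crucial observation is that $Z$ is itself a hyperplane complement in $H_m \cong \A^{n-1}$, cut out by the at most $m-1$ traces $H_j\cap H_m$ for $j<m$, so the inductive hypothesis applies both to $\AA_{m-1}$ and to $Z$. The base case $m=1$ is the elementary computation $\A^n - H_1 \cong \A^{n-1} \times \G_m$, whose \'etale cohomology is generated in degree one by the single class $e_1 \in H^1_{\et}(\A^n - H_1;\Q_\ell) \cong \Q_\ell(-1)$.

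Writing the Gysin sequence
\[
\cdots \to H^{i-2}_{\et}(Z;\Q_\ell)(-1) \xrightarrow{\gamma} H^i_{\et}(\AA_{m-1};\Q_\ell) \xrightarrow{j^*} H^i_{\et}(\AA;\Q_\ell) \xrightarrow{\mathrm{res}} H^{i-1}_{\et}(Z;\Q_\ell)(-1) \to \cdots,
\]
I would read (i) off from the case $i=1$: the first term vanishes and the last lies in the one-dimensional space $H^0_{\et}(Z;\Q_\ell)(-1)$, so $H^1_{\et}(\AA;\Q_\ell)$ is an extension of a subspace of $\Q_\ell(-1)$ by $j^*H^1_{\et}(\AA_{m-1};\Q_\ell)$. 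The latter is spanned by images from the $\A^n - H_j$ with $j<m$ by induction, and the canonical class $e_m \in H^1_{\et}(\A^n - H_m;\Q_\ell)$, when pulled back to $\AA$, has nonzero residue along $H_m$ and so surjects onto the image of $\mathrm{res}$. Together these cover $H^1_{\et}(\AA;\Q_\ell)$.

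For (ii), the inductive hypothesis says that $H^\bullet_{\et}(\AA_{m-1};\Q_\ell)$ is generated under cup product by the classes $e_j$ for $j<m$, and $H^\bullet_{\et}(Z;\Q_\ell)$ by their restrictions $e_j|_Z$. The key computational input is the projection-formula identity
\[
\mathrm{res}(e_m \cup j^*\alpha) = \pm\,\alpha|_Z \quad\text{for all } \alpha \in H^\bullet_{\et}(\AA_{m-1};\Q_\ell),
\]
which makes the residue map surjective onto $H^{i-1}_{\et}(Z;\Q_\ell)(-1)$ in every degree: any polynomial in the $e_j|_Z$'s is hit by $e_m$ cupped with the same polynomial in the $e_j$'s on $\AA$. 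Exactness of the Gysin sequence then expresses any class in $H^i_{\et}(\AA;\Q_\ell)$ as a sum $j^*\omega + e_m \cup \tilde\beta$, both of which are cup products of degree-one classes, giving (ii).

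The main obstacle is establishing the Leibniz-type identity $\mathrm{res}(e_m \cup j^*\alpha) = \pm\,\alpha|_Z$ cleanly in the \'etale setting; this is the substantive input that drives the whole argument, and while it is a standard feature of the Gysin boundary (essentially a consequence of the Thom isomorphism for the normal bundle to $H_m$), verifying it is where any delicacy sits. Beyond that, one must check that the Tate twists on the $e_j$'s match the $(-1)$ twist appearing on the right of the Gysin sequence, but this is routine bookkeeping.
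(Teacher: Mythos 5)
You should know first that the paper does not prove this proposition at all: it is quoted as a theorem of Kim \cite{kim:arrangements} (see also Lehrer \cite{lehrer}), so there is no in-paper argument to compare against. Judged on its own terms, your deletion--restriction induction is the standard proof of this statement and is essentially correct. The Gysin sequence for the smooth codimension-one pair $(\AA_{m-1},Z)$ exists in \'etale cohomology by purity, the base case $\A^n-H_1\cong\A^{n-1}\times\G_m$ is right, and the identity $\mathrm{res}(e_m\cup j^*\alpha)=\pm\,\alpha|_Z$ that you correctly single out as the substantive input is exactly the statement that the Gysin boundary is a morphism of $H^*_{\et}(\AA_{m-1};\Q_\ell)$-modules together with the normalization $\mathrm{res}(e_m|_{\AA})=1$; both are standard consequences of purity and of the functoriality of the Gysin sequence under the linear projection $L_m\colon(\A^n,H_m)\to(\A^1,\{0\})$.

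A few points of bookkeeping you should make explicit. First, the induction must be run on $m$ with $n$ arbitrary (or on the pair $(m,n)$), since $Z$ lives in $\A^{n-1}$; this is harmless because $Z$ is cut out by at most $m-1$ hyperplanes. Second, the inductive hypothesis you actually need is the combined statement that $H^*_{\et}$ is generated as an algebra by the specific canonical classes $e_j$, which is what (i) and (ii) together assert; you invoke this correctly but should state it as the hypothesis being propagated. Third, handle the degenerate cases: if $H_m$ coincides with some earlier $H_j$ or if $Z=H_m\cap\AA_{m-1}$ is empty, then $\AA=\AA_{m-1}$ and there is nothing to prove; and parallel intersections $H_j\cap H_m=\emptyset$ simply contribute no hyperplane to $Z$. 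Finally, in step (i) note that the image of $\mathrm{res}$ lies in the one-dimensional space $H^0_{\et}(Z;\Q_\ell)(-1)$ (using that $Z$ is irreducible, being open dense in $H_m\cong\A^{n-1}$), so a single class with nonzero residue suffices. With these details filled in, the argument is complete.
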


Let $k = \Fqbar$ and suppose that $L$ is defined over $\F_q$.  Let  $\Frob_q\colon \AA\to \AA$ be the Frobenius morphism. (In this paper we always mean the \emph{geometric} Frobenius morphism, which acts on the coordinates of an affine variety over $\Fqbar$ by raising the coordinates to the $q$th power.)  A consequence of Proposition~\ref{pr:kim} that will be crucial for us is:

\medskip
\emph{The induced action of $\Frob_q$ on $H^i_{\et}(\AA;\Q_\ell)$ is scalar multiplication by $q^i$.}
\medskip

For those familiar with the terms, this follows from Proposition~\ref{pr:kim} as follows.    
Let $\Q_\ell(1)$ denote the $1$-dimensional Galois representation given by the cyclotomic character, let $\Q_\ell(n)\coloneq \Q_\ell(1)^{\otimes n}$, and let $\Q_\ell(-1)$ denote the dual of $\Q_\ell(1)$. It is well-known that 
$H^1_{\et}(\A^1 - \{0\}; \Q_\ell)\approx \Q_\ell(-1)$, so Proposition~\ref{pr:kim}(i) implies that $H^1_{\et}(\AA;\Q_\ell)\approx \Q_\ell(-1)^{\oplus b_1}$ for some $b_1$.  By Proposition~\ref{pr:kim}(ii) it follows that $H^i_{\et}(\AA;\Q_\ell)\approx \Q_\ell(-i)^{\oplus b_i}$ for some $b_i$. The geometric Frobenius morphism $\Frob_q$ is known to act by $q^{-1}$ on $\Q_\ell(1)$, so it acts by $q^{i}$ on $\Q_\ell(-i)$, as claimed.

\para{Finitely-generated FI-modules} Suppose that $V$ is an FI-module with each $V_n$ finite dimensional.   Then 
$V$ is {\em finitely generated} \cite[Definition~2.16]{CEF} if there are finitely many elements $v_1,\ldots,v_m\in V_d$ such that each $V_n$ is spanned by the images $f_*v_j$ induced by all inclusions $f\colon [d]\hookrightarrow [n]$.

\begin{prop} Let $\AA(L)$ be an FI-CHA over a field of characteristic $\neq\ell$.  Then $H^i_{\et}(\AA(L);\Q_\ell)$ is a finitely generated FI-module for each $i\geq 0$.
\label{pr:hifg}
\end{prop}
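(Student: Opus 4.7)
The plan is to handle the case $i = 1$ directly and then bootstrap to $i \geq 2$ via the cup product structure, invoking the closure of finitely generated FI-modules under tensor products and quotients established in \cite{CEF}.

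For $i = 1$: By Proposition~\ref{pr:kim}(i), the space $H^1_{\et}(\AA(L)_n; \Q_\ell)$ is spanned by classes $\alpha_i^f$ indexed by $i \in [m]$ and injections $f\colon [d] \hookrightarrow [n]$, where $\alpha_i^f$ is the pullback of a fixed generator of $H^1_{\et}(\A^1 - \{0\}; \Q_\ell)$ along the composite of the inclusion $\AA(L)_n \hookrightarrow \A^n - H_i^f$ with the linear form $L_i^f$. Since $L_i^f = L_i \circ f^*$ and $H_i^f = (f^*)^{-1}(H_i^{\id})$, the scheme map $f^*\colon \A^n \to \A^d$ restricts to a map $\AA(L)_n \to \AA(L)_d$, and naturality of pullback yields $\alpha_i^f = f_* \alpha_i^{\id}$, where $f_*$ denotes the FI-module structure map. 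Hence the finite set $\{\alpha_1^{\id}, \ldots, \alpha_m^{\id}\} \subset H^1_{\et}(\AA(L)_d; \Q_\ell)$ generates $H^1_{\et}(\AA(L); \Q_\ell)$ as an FI-module.

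For $i \geq 2$: Naturality of cup product under pullback assembles the iterated cup products into a morphism of FI-modules
\beq
H^1_{\et}(\AA(L); \Q_\ell)^{\otimes i} \longrightarrow H^i_{\et}(\AA(L); \Q_\ell),
\eeq
which is surjective at every level $n$ by Proposition~\ref{pr:kim}(ii). Since tensor powers of finitely generated FI-modules are finitely generated and quotients of finitely generated FI-modules are finitely generated (\cite{CEF}), we conclude that $H^i_{\et}(\AA(L); \Q_\ell)$ is finitely generated. No essential obstacle is anticipated: the only step requiring careful bookkeeping is verifying that the hyperplane classes $\alpha_i^f$ and the cup product maps are compatible with the FI-structure, and both reduce immediately to functoriality of \'etale cohomology applied to the FI-structure maps $f^*\colon \AA(L)_n \to \AA(L)_d$ used to define the FI-scheme $\AA(L)$.
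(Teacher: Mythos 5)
Your proposal is correct and follows essentially the same route as the paper: finite generation of $H^1_{\et}(\AA(L);\Q_\ell)$ by the $m$ classes pulled back from level $d$ via Proposition~\ref{pr:kim}(i) and naturality, followed by Proposition~\ref{pr:kim}(ii) to exhibit $H^i_{\et}(\AA(L);\Q_\ell)$ as a quotient of a power of $H^1$ (the paper uses $\bwedge^i$ where you use $\otimes^i$, an immaterial difference) together with \cite[Proposition 2.62]{CEF}. No gaps.
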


\begin{proof}
For each form $L_j\colon \A^d\to \A^1$ let $\omega_j\in H^1_{\et}(\AA(L)_d;\Q_\ell)$ be the image of the map 
\[L_j^*\colon H^1_{\et}(\A^d -H_j)\ra H^1_{\et}(\AA(L)_d;\Q_\ell).\] 

We begin by showing that the FI-module $H^1_{\et}(\AA(L);\Q_\ell)$ is finitely generated by these classes $\omega_1,\ldots,\omega_m$.

For each $f\colon [d]\hookrightarrow [n]$, let $\omega_j^f\in H^1_{\et}(\AA(L)_n;\Q_\ell)$ be the image of the map 
\[{L_j^f}^*\colon H^1_{\et}(\A^n -H_j^f)\ra H^1_{\et}(\AA(L)_n;\Q_\ell).\]

Proposition~\ref{pr:kim} implies that $H^1_{\et}(\AA(L)_n;\Q_\ell)$ is spanned by the classes $\omega_j^f$ as $f$ ranges over all inclusions $f\colon [d]\hookrightarrow [n]$.  Since $L_j^f=f^*\circ L_j$, naturality implies that $\omega_j^f=f^*\omega_j$. This shows that $H^1_{\et}(\AA(L)_n;\Q_\ell)$ is spanned by the images $f^*\omega_j$ of the finite set $\omega_1,\ldots,\omega_m\in H^1_{\et}(\AA(L)_d;\Q_\ell)$, as desired.

Proposition~\ref{pr:kim}(ii) now implies that $H^i_{\et}(\AA(L);\Q_\ell)$ is a quotient of $\bwedge^i H^1_{\et}(\AA;\Q_\ell)$, which is finitely generated by \cite[Proposition 2.62]{CEF}.
\end{proof}

\begin{cor} Let $\AA(L)$ be an FI-CHA over a field of characteristic $\neq \ell$.  Then the sequence $H^i_{\et}(\AA(L)_n;\Q_\ell)$ of $S_n$-representations is uniformly representation stable in the sense of \cite{CF}.
\label{co:ficharepstab}
\end{cor}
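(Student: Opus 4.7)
The plan is to deduce this as an immediate consequence of Proposition~\ref{pr:hifg} together with the translation theorem from \cite{CEF} between finite generation of FI-modules and uniform representation stability of the underlying sequence of $S_n$-representations.

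First I would invoke Proposition~\ref{pr:hifg}, which tells us that $H^i_{\et}(\AA(L);\Q_\ell)$ is a finitely generated FI-module over $\Q_\ell$. Since $\Q_\ell$ has characteristic $0$, the main comparison theorem of \cite{CEF} (specifically the equivalence between finite generation of an FI-module over a field of characteristic zero and uniform representation stability with finite-dimensional multiplicities of its underlying sequence of $S_n$-representations) applies. This yields that the sequence $\{H^i_{\et}(\AA(L)_n;\Q_\ell)\}_n$, with the natural maps induced by the FI-module structure, is uniformly representation stable in the sense of \cite{CF}.

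There is essentially no obstacle beyond bookkeeping: the nontrivial content has already been packaged into Proposition~\ref{pr:hifg} (which used Proposition~\ref{pr:kim} to reduce finite generation in all degrees to finite generation in degree $1$, plus the fact that tensor products and quotients of finitely generated FI-modules are finitely generated). The remaining step is simply to cite the CEF equivalence. If one wanted an explicit stable range, one could trace through the generation degree of $H^1_{\et}(\AA(L);\Q_\ell)$ (bounded by $d$, the number of variables in the generating forms $L_j$) and apply the bounds from \cite{CEF} on how a generation degree propagates through exterior powers and subquotients, producing a linear stable range of the form $n \geq K\cdot i + C$; but for the qualitative statement of the corollary this is not needed.
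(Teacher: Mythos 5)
Your proposal is correct and matches the paper's own argument exactly: the paper likewise combines Proposition~\ref{pr:hifg} with the result of \cite{CEF} (their Proposition~2.58) that finite generation of an FI-module over a field of characteristic zero implies uniform representation stability of the underlying sequence. The extra remarks on explicit stable ranges are fine but not needed for the corollary as stated.
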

\begin{proof}
It was proved in \cite[Proposition 2.58]{CEF} that if $V$ is a finitely generated FI-module over a field of characteristic 0, then the sequence $V_n$ of $S_n$-representations is uniformly stable.
\end{proof}

Applying the representation stability from Corollary~\ref{co:ficharepstab} to the trivial representation, we conclude that the dimension of the $S_n$-invariant subspace $H^i_{\et}(\AA(L)_n;\Q_\ell)^{S_n}$ becomes independent of $n$ for large enough $n$. Transfer implies that 
\[H^i_{\et}(\AA(L)_n/S_n;\Q_\ell)\approx H^i_{\et}(\AA(L)_n;\Q_\ell)^{S_n}\]

Therefore, to understand the consequences of Corollary~\ref{co:ficharepstab}, we first describe 
the quotient spaces $\AA(L)_n/S_n$.

%Our goal in this section is to explain how the theory of FI-modules can be used to study systematically the geometry of the spaces $\AA(L)_n$ and their quotients by the symmetric group action $\AA(L)_n / S_n$.
\begin{definition}
For any FI-CHA $\AA(L)$, the symmetric group $S_n$ acts on the scheme $\AA(L)_n\subset \A^n$. We denote by $\BB(L)_n$ the quotient scheme $\BB(L)_n\coloneq \AA(L)_n/S_n$. 
\end{definition}

For $L=\{x_1-x_2\}$, we saw in Section~\ref{subsec:twisted} that the quotient of the hyperplane complement $\PConf_n=\AA(x_1-x_2)_n$ by $S_n$ was the moduli space  $\Conf_n=\BB(x_1-x_2)_n$ of squarefree polynomials. For any FI-CHA $\AA(L)$, we have assumed that $L$ contains the form $x_1-x_2$; therefore all points $(x_1,\ldots,x_n)\in\AA(L)_n$ must have $x_i\neq x_j$, so $S_n$ acts freely on $\AA(L)_n$. Moreover we can restrict the covering map $\PConf_n\to \Conf_n$ to $\AA(L)_n\subset \PConf_n$. This identifies the quotient $\BB(L)_n$ with an open subspace of $\Conf_n$. 
%
%Let $D_n$ denote the space of monic degree-$n$ polynomials in $\Z[T]$.  Of course $D_n$ can be identified with $\A^n$ via the map sending the polynomial 
%$T^n+a_1T^{n-1}+\cdots+a_n$ to the $n$-tuple $(a_1,\ldots,a_n)$, but we keep the separate notation $D_n$ to avoid confusion. Consider the map 
%\[\psi:\A^n\to D_n\]
%defined by 
%\[(x_1,\ldots,x_n)\mapsto P(T)=(T-x_1)\cdots(T-x_n)\]
%
%Since $\psi$ is invariant under permutation of the coordinates $x_i$,  it factors through the quotient $\A^n/S_n$.  In fact $\psi$ induces an isomorphism $\A^n/S_n\to D_n$, as follows.  The $S_n$-invariant functions on $\A^n$ form the ring of symmetric polynomials $\Z[x_1,\ldots,x_n]^{S_n}$.  As a function of the roots $x_j$, the coefficient $a_i$ is $\pm$ the $i$th elementary symmetric function $e_i(x_1,\ldots,x_n)$. 
%The fundamental theorem of symmetric polynomials states that \[\Z[x_1,\ldots,x_n]^{S_n}=\Z[e_1,\ldots,e_n]=\Z[a_1,\ldots,a_n],\] giving the isomorphism $\A^n/S_n\to D_n$.
%
%Restricting $\psi$ to the subspace $\PConf_n=\AA(x_1-x_2)_n$ consisting of $n$-tuples of distinct points identifies $\BB(x_1-x_2)_n=\PConf_n/S_n$ with the space $\Conf_n$ of monic \emph{squarefree} polynomials. More generally, 
For any field $k$, the  points $\BB(L)_n(k)$ are in bijection with the set of monic squarefree  degree-$n$ polynomials $P(T)\in k[T]$ with the property that no subset of the roots of $P$  (taken in the algebraic closure $\overline{k}$) satisfies any of the linear relations in $L_i$.

For example, when $L=\{x_1-x_2,x_1+x_2,x\}$ the points of $\BB(L)_n(k)$ are precisely the monic, squarefree,  degree $n$ polynomials not divisible by $T$ or by $T^2-a$ for any $a\in \overline{k}$.  When $L = \set{x_1 - x_2, x_1- 2x_2 + x_3}$ the points of $\BB(L)_n(k)$ are the squarefree, 
degree-$n$ polynomials having no three roots in arithmetic progression.  

\subsection{Point-counting for FI-CHAs}
\label{sec:pointcountingandstabilization}
Let $L$ be a collection of linear forms defined over $\F_q$, so that the schemes $\AA(L)$ and $\BB(L)_n$ are defined over $\F_q$. The $\F_q$-points $\BB(L)_n(\F_q)$ form a finite set, consisting of the monic squarefree degree-$n$ polynomials in $\F_q[T]$ whose roots do not satisfy any of the relations in $L$.

\para{Counting squarefree polynomials} We can count the number $|\BB(L)_n(\F_q)|$ of such polynomials in terms of the \'etale cohomology of $\BB(L)_n$, via the Grothendieck--Lefschetz fixed point formula.
%If $\Frob_q\colon {\BB(L)_n}\to {\BB(L)_n}$ is the Frobenius map, this formula states that
%\begin{equation}
%\label{eq:GLone}
%|\BB(L)_n(\F_q)|=\sum (-1)^i \tr\big(\Frob_q: H^i_{c}(\BB(L)_n;\Q_\ell)\big),
%\end{equation}
%where $H^i_{c}$ denotes \'etale cohomology with compact support (as always, of the base change  ${\BB(L)_n}_{/\Fqbar}$).
%
%For general varieties the action of Frobenius on $H^i_{c}$ or $H^i_{\et}$ is very difficult to compute, but for $\BB(L)_n$ the situation is much simpler: $\Frob_q$ acts by a power of $q$. Combined with Poincar\'e duality, we will see below that the formula \eqref{eq:GLone} becomes
We will see below that in this case this becomes the formula 
\[|\BB(L)_n(\F_q)|=\sum (-1)^i q^{n-i} \dim H^i_{\et}(\BB(L)_n;\Q_\ell)\]
By transfer, the dimension of $H^i_{\et}(\BB(L)_n;\Q_\ell)$ is the dimension of the $S_n$-invariant subspace of $H^i_{\et}(\AA(L)_n;\Q_\ell)$, and Corollary~\ref{co:ficharepstab} states that this invariant subspace becomes independent of $n$ for large $n$. This seems to show that the right hand side of
\[\frac{|\BB(L)_n(\F_q)|}{q^n}=\sum (-1)^i q^{-i} \dim H^i_{\et}(\BB(L)_n;\Q_\ell)\]
converges to a fixed power series as $n\to \infty$. However, we need a bound on these dimensions (e.g.\ as in Definition~\ref{def:convergent}); otherwise the power series itself need not converge, for example!

\para{Counting other statistics}
If $\chi$ is a class function  $\chi\colon S_n\to \Q$, we can consider $\chi$ as a function on the finite set $\Conf_n(\F_q)$ as follows. Given $f(T)\in \Conf_n(\F_q)$, let $\sigma_f\in S_n$ be the permutation of the roots $R(f)=\{x\in \Fqbar|f(x)=0\}$ induced by $\Frob_q$. This depends on an ordering of the roots, so $\sigma_f$ is only well-defined up to conjugation; nevertheless, we can define \[\chi(f)\coloneq \chi(\sigma_f)\] since $\chi$ is conjugacy-invariant.
Note that each $k$-cycle in the cycle decomposition of $\sigma_f$ corresponds to a degree $k$  irreducible factor of $f(T)$.

If $V$ is any $S_n$-representation over a field of characteristic 0, we denote by $\langle \chi,V\rangle$ the standard inner product of $\chi$ with the character of $V$; we sometimes refer to $\langle \chi,V\rangle$ as the \emph{multiplicity} of $\chi$ in $V$ since this is true when both are irreducible, by Schur's lemma. We write $\langle \chi, H^i(\AA(L)_n)\rangle$ as an abbreviation for  the inner product $\langle \chi, H^i_{\et}(\AA(L)_n;\Q_\ell)\rangle$. We remark that this inner product lies in $\Q$, since every representation of $S_n$ is defined over $\Q$; furthermore, at least for the FI-CHA $\AA(x_1-x_2)$ considered in \S\ref{s:purebraid}, this inner product is independent of $\ell$.

Our main general tool for studying the statistics of various sets of squarefree polynomials over $\F_q$ is the following.  

\begin{theorem}[{\bf Point counts for hyperplane arrangements}]
\label{th:chi}
Let $L$ be a collection of linear forms defined over $\F_q$, and let $\chi$ be any class function on $S_n$. Then  for each $n\geq 1$, 
\begin{equation}
\label{eq:chi}
\sum_{f(T)\in \BB(L)_n(\F_q)}\chi(f)=\sum_i (-1)^i q^{n-i}\langle \chi,H^i(\AA(L)_n)\rangle
\end{equation}
\end{theorem}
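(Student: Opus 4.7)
The plan is to reduce to the case $\chi=\chi_V$ for a single $S_n$-representation $V$, then apply the twisted Grothendieck--Lefschetz formula to $\BB(L)_n$ with an appropriate local system, and finally use transfer to rewrite everything in terms of the cohomology of $\AA(L)_n$. Since any class function on $S_n$ is a $\Q$-linear combination of characters of actual representations and both sides of \eqref{eq:chi} are linear in $\chi$, it suffices to prove the formula when $\chi=\chi_V$ for some $S_n$-representation $V$ defined over $\Q$.

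First I would note that since $L$ contains $x_1-x_2$, the group $S_n$ acts freely on $\AA(L)_n$, so $\AA(L)_n\to\BB(L)_n$ is a finite \'etale Galois cover with group $S_n$. Exactly as in \S\ref{subsec:twisted}, this cover lets me attach to $V$ a lisse $\ell$-adic sheaf $\V$ on $\BB(L)_n$ whose stalk at any geometric point is $V$, and whose monodromy is given by the $S_n$-action on $V$. For $f\in\BB(L)_n(\F_q)$, the Frobenius $\Frob_q$ permutes the roots according to a permutation $\sigma_f\in S_n$ (well-defined up to conjugacy), and the trace identity $\tr(\Frob_q\mid\V_f)=\chi_V(\sigma_f)$ follows as in \S\ref{subsec:twisted}. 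Next, since $\BB(L)_n$ is smooth of dimension $n$, applying Grothendieck--Lefschetz with coefficients in $\V$ together with Poincar\'e duality gives the formula
\[\sum_{f\in\BB(L)_n(\F_q)}\chi_V(\sigma_f)=q^n\sum_j(-1)^j\tr\bigl(\Frob_q:H^j_{\et}(\BB(L)_n;\V^\vee)^\vee\bigr),\]
exactly paralleling the derivation of \eqref{eq:GLcorrect} in the untwisted case.

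To evaluate the right-hand side, I would invoke transfer for the \'etale $S_n$-cover $\AA(L)_n\to\BB(L)_n$, which yields a natural isomorphism
\[H^j_{\et}(\BB(L)_n;\V^\vee)\;\cong\;\bigl(H^j_{\et}(\AA(L)_n;\Q_\ell)\otimes V^\vee\bigr)^{S_n}.\]
On the right, $\Frob_q$ acts on the first tensor factor by $q^j$ (this is the corollary of Proposition~\ref{pr:kim} that was recorded in this section) and trivially on $V^\vee$ (which carries no Galois action, being defined over $\Q$). Therefore $\Frob_q$ acts on $H^j_{\et}(\BB(L)_n;\V^\vee)^\vee$ by $q^{-j}$, and its trace equals $q^{-j}\cdot\dim(H^j_{\et}(\AA(L)_n;\Q_\ell)\otimes V^\vee)^{S_n}=q^{-j}\langle V^\vee,H^j(\AA(L)_n)\rangle_{S_n}$. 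Since every irreducible $S_n$-representation is self-dual, $\langle V^\vee,H^j(\AA(L)_n)\rangle=\langle V,H^j(\AA(L)_n)\rangle$, and substituting back gives
\[\sum_{f\in\BB(L)_n(\F_q)}\chi_V(\sigma_f)=\sum_j(-1)^j q^{n-j}\langle V,H^j(\AA(L)_n)\rangle,\]
which is exactly \eqref{eq:chi}.

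The main potential obstacle is verifying transfer and keeping careful track of Tate twists and sign conventions in Poincar\'e duality, especially because $\BB(L)_n$ is only quasiprojective (not proper), so the Grothendieck--Lefschetz formula must first be stated with compactly-supported cohomology and then converted via duality. Once those bookkeeping points are cleanly organized, the computation of the Frobenius eigenvalues is immediate from the consequence of Proposition~\ref{pr:kim} recorded in this section, and the rest is formal. Everything else\,---\,linearity, the trace-on-stalks identity, and self-duality of $S_n$-representations\,---\,is routine.
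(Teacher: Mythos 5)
Your proposal is correct and follows essentially the same route as the paper's proof: reduce by linearity to characters of representations, build the local system $\V$ from the Galois $S_n$-cover, apply the twisted Grothendieck--Lefschetz formula, use transfer to identify the cohomology of $\BB(L)_n$ with the $V$-isotypic part of the cohomology of $\AA(L)_n$, and read off the Frobenius eigenvalues from Proposition~\ref{pr:kim}. The only difference is bookkeeping order\,---\,you apply Poincar\'e duality on $\BB(L)_n$ up front and carry $\V^\vee$ through the transfer, whereas the paper works with $H^j_c(\BB(L)_n;\V)$ and dualizes on $\AA(L)_n$ at the end\,---\,and self-duality of $S_n$-representations makes these equivalent.
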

\begin{proof}
Since both sides of \eqref{eq:chi} are linear in $\chi$, and since the irreducible characters give a basis for class functions on $S_n$, it is enough to prove that \eqref{eq:chi} holds when $\chi$ is an irreducible character of $S_n$ (i.e.\ the character of an irreducible representation). 

The Galois $S_n$-cover $\AA(L)_n\to \BB(L)_n$ yields a natural corrrespondence between the set of (conjugacy classes of) finite-dimensional representations of $S_n$ and the set of (isomorphism classes of) those finite-dimensional local systems on $\BB(L)_n$ that become trivial when restricted to $\AA(L)_n$. Given an irreducible character $\chi$, let $V$ denote the corresponding irreducible representation of $S_n$, and let $\V$  denote the corresponding local system on $\BB(L)_n$.

Applying the Grothendieck--Lefschetz formula to the local system $\V$ relates the action of $\Frob_q$ on the stalks $\V_f$ with its action on the \'etale cohomology with coefficients in $\V$, via the following formula:
\begin{equation}
\label{eq:firstV}
\sum_{f\in \BB(L)_n(\F_q)} \tr\big(\Frob_q : \V_f\big)= \sum_j (-1)^j \tr\big( \Frob_q : H^j_{c}(\BB(L)_n; \V)\big)
\end{equation}
The left side is easy to analyze: each stalk $\V_f$ is isomorphic to the representation $V$, and  $\Frob_q$ acts according to the action of $\sigma_f\in S_n$. Since $\chi$ is the character of $V$, we see that $\tr(\Frob_q:\V_f)=\tr(\sigma_f:V)=\chi(f)$, showing that the left sides of \eqref{eq:chi} and \eqref{eq:firstV} coincide. 

It remains to simplify the right side of \eqref{eq:firstV}. Let $\widetilde{\V}$ denote the pullback of $\V$ to $\AA(L)_n$.  Transfer gives an isomorphism $H^j_{c}(\BB(L)_n;\V)\approx H^j_{c}(\AA(L)_n;\widetilde{\V})^{S_n}$. But $\widetilde{\V}$ is trivial on $\AA(L)_n$, so we have 
\[H^j_{c}(\AA(L)_n;\widetilde{\V})\approx H^j_{c}(\AA(L)_n;\Q_\ell)\otimes V\]
 as $S_n$-representations. Combining these gives
\beq
H^j_{c}(\BB(L)_n;\V) \approx \big(H^j_{c}(\AA(L)_n; \Q_\ell) \tensor V\big)^{S_n} \approx H^j_{c}(\AA(L)_n;\Q_\ell) \tensor_{\Q[S_n]} V.
\eeq
In particular, 
\[\dim (H^j_{c}(\BB(L)_n; \V))=\dim (H^j_{c}(\AA(L)_n;\Q_\ell) \tensor_{\Q[S_n]} V).\]
%%%WORKING HERE 4.18pm
Since $V$ is self-dual as an $S_n$-representation, $H^j_{c}(\AA(L)_n;\Q_\ell) \tensor_{\Q[S_n]} V$ is isomorphic to $\Hom_{\Q[S_n]}\big(V,H^j_{c}(\AA(L)_n)\big)$, whose dimension is given by the inner product $\langle \chi,H^j_{c}(\AA(L)_n)\rangle$.

%Because the hyperplanes $x_i - x_j$ are among those excluded from $\AA(L)_n$, we know that $S_n$ acts without fixed points on $\AA(L)_n$, so $\BB(L)_n$ is smooth.  
Since $\AA(L)_n$ is smooth of dimension $n$, Poincar\'{e} duality provides an identification
\beq
H^{2n-i}_{c}(\AA(L)_n; \Q_\ell) \approx \Hom\big(H^{i}_{\et}(\AA(L)_n;\Q_\ell); \Q_\ell(-n)\big).
\eeq
Since the $S_n$-action on $\Q_{\ell}(-n)$ is trivial, this implies that $\langle \chi,H^{2n-i}_{c}(\AA(L)_n)\rangle=\langle \chi,H^i_{\et}(\AA(L)_n)\rangle$.
%It follows that
%\beq
%\dim H^{2n-i}_{\et}(\BB(L)_n;V_n) = \langle V_n, H^{i}_{\et}(\AA(L)_n;\Q_\ell) \rangle = \langle P, H^{i}_{\et}(\AA(L)_n;\Q_\ell) \rangle
%\eeq
Moreover, by Proposition~\ref{pr:kim}(ii) the action of $\Frob_q$ on $H^{i}_{\et}(\AA(L)_n;\Q_\ell)$ is multiplication by $q^i$, and the action on $\Q_\ell(-n)$ is multiplication by $q^n$, so it follows that $\Frob_q$ acts on $H^{2n-i}_{c}(\BB(L)_n; V_n)$ by multiplication by $q^{n-i}$.
% and moreover that
%\beq
%\dim H^{2n-i}_{\et}(\BB(L)_n; \Q_\ell) = \dim H^{i}_{\et}(\BB(L)_n;\Q_\ell) = \dim H^{i}_{\et}(\AA(L)_n;\Q_\ell)^{S_n}
%\eeq
Putting this all together, we find that \[\tr\big(\Frob_q:H^{2n-i}_{c}(\BB(L)_n;\V)\big)=q^{n-i}\langle \chi,H^i(\AA(L)_n)\rangle.\] Substituting this into \eqref{eq:firstV} yields the desired formula. \end{proof}

\subsection{Stabilization of point counts} 
\label{section:stabilization}

We now consider the stabilization of the point count $q^{-n}|\BB(L)_n(\F_q)|$ as $n\to \infty$. We also describe certain families of statistics $\chi_n\colon S_n\to \Q$ for which the  sum $q^{-n}\sum_{f\in \BB(L)_n(\F_q)}\chi_n(f)$ will similarly stabilize. These sequences will always stabilize ``formally'', but to obtain actual convergence we require a condition on the growth of the representations $H^i(\AA(L)_n)$; see Definition~\ref{def:convergent} for the precise definition.

Recall from the introduction that a \emph{character polynomial} is a polynomial $P\in \Q[X_1,X_2,\ldots]$. Such a character polynomial $P$ simultaneously determines a class function  $P_n\colon S_n\to \Q$ for all $n$ in the following way.
For each $k\geq 1$, define the class function $c_k\colon S_n\to \Q$ by setting $c_k(\sigma)$ equal to the number of $k$-cycles of $\sigma$. The assignment $X_k\mapsto c_k$ extends to a ring homomorphism from $\Q[X_1,X_2,\ldots]$ to the ring of class functions on $S_n$, under which $P\in \Q[X_1,X_2,\ldots]$ is sent to the class function $P_n\colon S_n\to \Q$ defined by
\beq
P_n(\sigma) = P(c_1(\sigma), c_2(\sigma), \ldots, c_k(\sigma)).
\eeq

\begin{definition}[{\bf Sequence $\chi_n$ given by character polynomial}]
A sequence of characters $\chi_n\colon S_n\to \Q$ is \emph{given by a character polynomial} (resp.\ \emph{eventually given by a character polynomial}) if there exists $P\in \Q[X_1,X_2,\ldots]$ such that $\chi_n=P_n$ for all $n$ (resp.\ for all $n\geq N$ for some $N$). 
\end{definition}

If it exists, this character polynomial $P$ is uniquely determined.
We will sometimes write $\langle P,Q\rangle_n$ as shorthand for the inner product $\langle P_n,Q_n\rangle$ of $S_n$-characters.

The expectations of character polynomials \[\E_{\sigma\in S_n} P_n(\sigma)=\frac{1}{n!}\sum_{\sigma\in S_n}P_n(\sigma)=\langle P_n,1\rangle\]  compute the averages of natural combinatorial statistics with respect to the uniform distribution on $S_n$. For example, the well-known fact that a randomly chosen permutation $\sigma\in S_n$ has 1 fixed point on average says that $\langle X_1,1\rangle_n=1$ for all $n\geq 1$. Similarly, the fact that a random permutation in $S_n$ has $\frac{1}{k}$ $k$-cycles on average says that $\langle X_k,1\rangle_n=\frac{1}{k}$ for all $n\geq k$ (for $n<k$, of course, there are no $k$-cycles at all). More complicated character polynomials still express natural statistics: for example, if $P=(X_2-\frac{1}{2})^2$, then $\langle P,1\rangle_n$ is the \emph{variance} of the number of transpositions $X_2$, and it is not hard to calculate that this is equal to $\frac{1}{4}$ for $1\leq n<4$ and $\frac{1}{2}$ for all $n\geq 4$.

In these examples the expectation is independent of $n$ except for some finite initial segment. In fact this is a general property of character polynomials, as the following proposition shows. The \emph{degree} $\deg P$ of a character polynomial $P\in \Q[X_1,X_2,\ldots]$ is defined in the usual way, except that $X_i$ is defined to have degree $i$.
\begin{proposition}
\label{pr:charpolyexpect}
Given two character polynomials $P,Q\in \Q[X_1,X_2,\ldots]$, the inner product $\langle P_n,Q_n\rangle$ of $S_n$-characters is independent of $n$ once $n\geq \deg P+\deg Q$.
\end{proposition}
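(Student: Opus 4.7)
The plan is to reduce the proposition to computing a single expectation and then to exploit the ring-homomorphism structure behind character polynomials. The key observation is that the assignment $P \mapsto P_n$ is a ring homomorphism, so $P_n(\sigma) Q_n(\sigma) = (PQ)_n(\sigma)$ for every $\sigma \in S_n$. Since $P_n$ and $Q_n$ take rational values, no complex conjugation is needed in the inner product, giving
\[
\langle P_n, Q_n \rangle = \frac{1}{n!}\sum_{\sigma \in S_n} (PQ)_n(\sigma) = \E_{\sigma \in S_n}\bigl[(PQ)_n(\sigma)\bigr].
\]
Since $\deg(PQ) = \deg P + \deg Q$, it suffices to prove the following single-polynomial statement: for every $R \in \Q[X_1, X_2, \ldots]$, the expectation $\E_{\sigma \in S_n}[R_n(\sigma)]$ is independent of $n$ once $n \geq \deg R$.

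Next I would change basis. Instead of using the monomials $X_k^{a}$, I would use the \emph{falling-factorial} monomials $\prod_k X_k^{(a_k)}$, where $X_k^{(a)} \coloneq X_k(X_k - 1)\cdots(X_k - a + 1)$. Writing $X_k^a$ as a linear combination of $X_k^{(j)}$ for $j \leq a$ (via Stirling numbers) does not increase the degree, so the falling-factorial monomials of total degree $\leq d$ still span all polynomials of degree $\leq d$. By linearity, the proposition reduces to showing that for each multi-index $(a_k)_{k\geq 1}$ with $t \coloneq \sum_k k a_k$, the expectation
\[
\E_{\sigma \in S_n}\Bigl[\prod_{k} c_k^{(a_k)}(\sigma)\Bigr]
\]
is constant for all $n \geq t$.

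The main step is the explicit evaluation of this expectation by a direct counting argument. The product $\prod_k c_k^{(a_k)}(\sigma)$ counts ordered tuples consisting, for each $k$, of $a_k$ distinct $k$-cycles of $\sigma$; since distinct cycles of a permutation are automatically disjoint, the total number of points used by such a tuple is exactly $t$. I would double-count the set of pairs $(\sigma, \text{tuple})$ two ways. On one hand, the number of possible tuples of disjoint cycles of the specified lengths drawn from $\{1,\ldots,n\}$ is $n^{(t)} / \prod_k k^{a_k}$, where $n^{(t)} = n(n-1)\cdots(n-t+1)$ counts ordered tuples of $t$ distinct elements and the factor $\prod_k k^{a_k}$ accounts for cyclic rotations within each cycle. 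On the other hand, once such a tuple is fixed, the number of $\sigma \in S_n$ containing it is $(n-t)!$, since $\sigma$ is free on the remaining $n-t$ points. Hence
\[
\E_{\sigma \in S_n}\Bigl[\prod_{k} c_k^{(a_k)}(\sigma)\Bigr] = \frac{1}{n!} \cdot \frac{n^{(t)}}{\prod_k k^{a_k}} \cdot (n-t)! = \frac{1}{\prod_k k^{a_k}},
\]
which is manifestly independent of $n$, valid for $n \geq t$.

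The only real subtlety is keeping the bookkeeping straight in the counting step: matching the degree $t = \sum k a_k$ in the ring $\Q[X_1, X_2, \ldots]$ (where $\deg X_k = k$) with the number of points used by a tuple of disjoint cycles, and verifying that passing from monomials in $X_k$ to falling-factorial monomials does not raise the relevant degree. Once these are in hand, the bound $n \geq \deg P + \deg Q$ falls out directly from applying the stabilized expectation to $R = PQ$.
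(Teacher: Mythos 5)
Your proof is correct and follows essentially the same route as the paper: reduce to the expectation of a single character polynomial $R=PQ$, pass to the basis of products of binomial/falling-factorial terms in the $X_k$ (your $\prod_k X_k^{(a_k)}$ is just $\prod_k a_k!\binom{X_k}{a_k}$), and evaluate the expectation by double-counting pairs $(\sigma,\text{tuple of disjoint cycles})$, yielding $1/\prod_k k^{a_k}$ versus the paper's equivalent $1/z_\mu$. The only difference is presentational — you count ordered tuples of cycles directly where the paper counts subsets $S$ with $\sigma|_S$ of prescribed cycle type — and your degree bookkeeping and stabilization threshold $n\geq \deg P+\deg Q$ are handled correctly.
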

\begin{proof}
Since $\langle P_n,Q_n\rangle = \langle P_n\cdot Q_n,1\rangle$, it suffices to prove the proposition in the case when $Q=1$. In this case the inner product $\langle P_n,1\rangle$ is just the average \[\langle P_n,1\rangle=\E_{\sigma\in S_n} P_n(\sigma)=\frac{1}{n!}\sum_{\sigma \in S_n} P_n(\sigma).\]
Given a sequence $\mu=(\mu_1,\mu_2,\ldots,\mu_k)$ of non-negative integers with $\mu_k>0$, we define the character polynomial $\binom{X}{\mu}$ as the product of binomials \[\binom{X}{\mu}\coloneq \binom{X_1}{\mu_1}\binom{X_2}{\mu_2}\cdots\binom{X_k}{\mu_k}.\] The character polynomial $\binom{X}{\mu}$ has degree $|\mu|$, where $|\mu|\coloneq \sum i\cdot \mu_i$, and in fact a basis for the character polynomials of degree $\leq n$ is given by all the $\binom{X}{\mu}$ with $|\mu|\leq n$. By the linearity of expectation, it suffices to prove the proposition for $P=\binom{X}{\mu}$.

When $n=|\mu|$, the class function $P=\binom{X}{\mu}$ takes only the values 0 and 1, since the only way $P_n(\sigma)$ can be nonzero is if $\sigma$ has cycle type $\mu$ (i.e.\ has exactly $\mu_i$ $i$-cycles). Thus the expectation is just the proportion of permutations lying in this conjugacy class. It is easy to check that this conjugacy class has size $\frac{|\mu|!}{z_\mu}$, where $z_\mu=\prod_i i^{\mu_i}\cdot \mu_i!$. Therefore $n=|\mu|$ implies $\langle \binom{X}{\mu},1\rangle_n=\frac{1}{z_\mu}$.

Now consider arbitrary $n\geq |\mu|$. For each $\sigma\in S_n$ the value of $P_n(\sigma)$ is the number of ways to choose $\mu_1$ 1-cycles from $\sigma$, $\mu_2$ 2-cycles, etc. For each such choice, the union of the supports of these cycles determines a subset $S\subset [n]$ with $|S|=|\mu|$, and conversely such a choice is determined by a subset $|S|=|\mu|$ for which the restriction $\sigma|_S$ has cycle type $\mu$. Thus by linearity of expectation we can write  
\[\langle P_n,1\rangle=\frac{1}{n!}\sum_{\sigma \in S_n}\sum_{|S|=|\mu|}\delta_\mu(\sigma,S)\] 
where $\delta_\mu(\sigma,S)$ is equal to 1 if $\sigma(S)=S$ and $\sigma|_S$ has cycle type $\mu$, and equal to 0 otherwise. Exchanging the order of summation yields $\frac{1}{n!}\sum_{|S|=|\mu|}\sum_{\sigma\in S_n}\delta_\mu(\sigma,S)$. The inner sum simply counts the number of permutations $\sigma$ with $\sigma|_S$ of cycle type $\mu$. Such a permutation is determined independently by $\sigma|_S$, for which there are $\frac{|\mu|!}{z_\mu}$ possibilities by the previous paragraph, and by $\sigma|_{[n]-S}$, for which there are $(n-|\mu|)!$ possibilities. Since the inner sum does not depend on $S$, the outer sum reduces to multiplication by $\binom{n}{|\mu|}$, the number of subsets $S$. We conclude that
\[\langle {\textstyle\binom{X}{\mu}},1\rangle_n=\frac{1}{n!}\cdot \frac{n!}{|\mu|!\cdot (n-|\mu|)!}\cdot \frac{|\mu|!}{z_\mu}\cdot (n-|\mu|)!=\frac{1}{z_\mu}\] for all $n\geq |\mu|$, as desired.
\end{proof}

\begin{numberedremark}
\label{rem:poisson}
We saw above that the number of $i$-cycles $X_i$ has mean $1/i$ for $n\geq i$. The formula $\langle \binom{X}{\mu},1\rangle_n=\frac{1}{z_\mu}$ from the proof above allows us to compute the higher moments of this statistic. Working equivalently with the factorial moments, we have \[\E_{S_n} \binom{X_i}{k}=\frac{1}{i^k\cdot k!}=\frac{(1/i)^k}{k!}\quad\text{ for all }n\geq k\cdot i.\] It is well-known (and easy to check) that the factorial moments of a Poisson distribution with mean $\theta$ are equal to $\frac{\theta^k}{k!}$. This means that with respect to the uniform distribution on $S_n$, as $n\to \infty$ the random variable $X_i$ converges in some sense to a Poisson distribution with mean $1/i$. Moreover, the fact that \[\E_{\sigma\in S_n}\binom{X}{\mu}=\frac{1}{z_\mu}=\prod_i \frac{(1/i)^{\mu_i}}{\mu_i!}=\prod_i \E_{\sigma\in S_n} \binom{X_i}{\mu_i}\quad\text{ for }n\geq |\mu|\] means that the random variables $X_i$ become \emph{independent} Poisson in the limit. This picture is well-known to probabilists; see \cite{Takacs} for an extensive history going back over 300 years.
\end{numberedremark}

\bigskip
It was proved in \cite[Theorem~1.6]{CEF} that if $V$ is a finitely-generated FI-module over a field of characteristic 0, then the characters $\chi_{V_n}$ are eventually given by a character polynomial. Therefore Proposition~\ref{pr:charpolyexpect} implies that for any fixed character polynomial $P$, the inner products $\langle P,V_n\rangle$ are eventually independent of $n$. Applying this to $H^i_{\et}(\AA(L);\Q_\ell)$ using Proposition~\ref{pr:hifg} yields the following corollary.

\begin{corollary}
For any character polynomial $P$, the inner products $\langle P,H^i_{\et}(\AA(L)_n;\Q_\ell)$ are eventually independent of $n$.
\end{corollary}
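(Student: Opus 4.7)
The plan is to chain together three previously established facts: the finite generation of $H^i_{\et}(\AA(L);\Q_\ell)$ as an FI-module, the fact that characters of finitely generated FI-modules are eventually given by character polynomials, and Proposition~\ref{pr:charpolyexpect} which says that inner products of character polynomials stabilize.

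First I would fix $i \geq 0$ and invoke Proposition~\ref{pr:hifg} to conclude that $V \coloneq H^i_{\et}(\AA(L);\Q_\ell)$ is a finitely generated FI-module over $\Q_\ell$. Next I would appeal to \cite[Theorem~1.6]{CEF}, cited in the paragraph just above the corollary, to obtain a character polynomial $Q_i \in \Q[X_1, X_2, \ldots]$ such that for all $n$ sufficiently large (say $n \geq N_i$), the character $\chi_{V_n}$ of $H^i_{\et}(\AA(L)_n;\Q_\ell)$ as an $S_n$-representation coincides with the class function $(Q_i)_n$. At this point the inner product in question becomes $\langle P, H^i_{\et}(\AA(L)_n;\Q_\ell) \rangle = \langle P_n, (Q_i)_n \rangle$ for all $n \geq N_i$.

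Finally, I would apply Proposition~\ref{pr:charpolyexpect} to the pair of character polynomials $P$ and $Q_i$, which tells us that $\langle P_n, (Q_i)_n \rangle$ is independent of $n$ once $n \geq \deg P + \deg Q_i$. Combining the two lower bounds, for $n \geq \max(N_i, \deg P + \deg Q_i)$ the inner product $\langle P, H^i_{\et}(\AA(L)_n;\Q_\ell) \rangle$ is constant, completing the proof.

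There is no real obstacle here — the corollary is essentially a formal consequence of assembling results already proved in the paper and in \cite{CEF}. The only mild subtlety worth flagging is that \cite[Theorem~1.6]{CEF} is stated for FI-modules over a field of characteristic $0$, which applies to $\Q_\ell$; no extension of scalars is needed since inner products of $S_n$-characters can be computed over any field of characteristic $0$ and give the same rational answer.
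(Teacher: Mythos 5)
Your proof is correct and takes exactly the same route as the paper: Proposition~\ref{pr:hifg} supplies finite generation of the FI-module $H^i_{\et}(\AA(L);\Q_\ell)$, \cite[Theorem~1.6]{CEF} converts this into the statement that its characters are eventually given by a character polynomial, and Proposition~\ref{pr:charpolyexpect} then gives stabilization of the inner product with $P$. Nothing further is needed.
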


\begin{definition}[{\bf Convergent FI-CHA}]  We say an FI-CHA $\AA(L)$ over $k$ is {\em convergent} if it satisfies the following two equivalent conditions:
\begin{enumerate}[1.]
\item for each $a\geq 0$ there is a function $F_a(i)$, subexponential in $i$ and not depending on $n$, which bounds the dimension of the $S_{n-a}$-invariant subspace
\beq
\dim H^i_{\et}(\AA(L)_n;\Q_\ell)^{S_{n-a}}\leq F_a(i) \text{ for all }n\text{ and }i.
\eeq
\item for each character polynomial $P\in \Q[X_1,X_2,\ldots]$ there exists a function $F_P(i)$, subexponential in $i$ and not depending on $n$, such that:
\[ \big|\langle P,H^i_{\et}(\AA(L)_n;\Q_\ell)\rangle\big|\leq F_P(i) \text{ for all }n\text{ and }i.\]
\end{enumerate}
\label{def:convergent}
\end{definition}
We can see that these conditions are equivalent as follows. The character of the permutation representation $\Q[S_n/S_{n-a}]$ is given for all $n$ by $a!\cdot \binom{X_1}{a}$, so from $V^{S_{n-a}}\approx \Hom_{S_n}(\Q[S_n/S_{n-a}],V)$ we see that
\[\dim (V^{S_{n-a}})=\langle a! {\textstyle\binom{X_1}{a}},V\rangle.\] This shows that the second condition implies the first.

For the converse, for each partition $\lambda\vdash d$, let $\chi_\lambda\colon S_d\to \Q$ be the corresponding irreducible character of $S_d$. Define the character polynomial $P_\lambda\coloneq \sum_{|\mu|=d}\chi_\lambda(c_\mu)\binom{X}{\mu}$, where $c_\mu\in S_d$ is a permutation with cycle type $\mu$. The character polynomial $P_\lambda$ is the unique character polynomial $P$ of degree $\leq d$ for which $P_d=\chi_\lambda$ and $P_n=0$ for $n<d$. It also has the property that $\langle P_\lambda,V\rangle_n$ is a nonnegative integer for any $S_n$-representation $V$, because $\langle P_\lambda,V\rangle_n$ is the multiplicity of the irreducible $S_d$-representation $V_\lambda$ inside $V^{S_{n-d}}$. In particular, if $d_\lambda=\dim V_\lambda$ we have
\[\sum_{\lambda \vdash d}d_\lambda\langle P_\lambda,V\rangle_n=\dim V^{S_{n-d}}.\] This corresponds to the fact that the sum $\sum d_\lambda\chi_\lambda$ is the character of the regular representation, so that $\sum d_\lambda P_\lambda=d!\cdot \binom{X_1}{d}$. 
This formula implies that $\langle P_\lambda,V\rangle\leq \dim V^{S_{n-d}}$ for any $S_n$-representation $V$, so the first condition implies the second for the $P_\lambda$. The second condition for arbitrary $P$ follows, since any $P$ is a finite linear combination of the $P_\lambda$.

\begin{theorem}[{\bf Character polynomials for convergent arrangements}]
 Let $L$ be an FI-CHA over $\F_q$, and denote by $\langle P,H^i(\AA(L))\rangle\in \Q$ the limiting multiplicity
\[\langle P,H^i(\AA(L))\rangle \coloneq \lim_{n\to \infty} \langle P,H^i_{\et}(\AA(L)_n;\Q_\ell).\]
If $\AA(L)$ is convergent, then for any character polynomial  $P$:
%\beq
%\gamma(L,P) = \sum_{i=0}^\infty (-1)^i q^{-i} a(L,P,i)
%\eeq
%converges, where $a(L,P,i)$ are the stable multiplicities of \eqref{alpi}.
\begin{equation}
\label{eq:convergentlimit}
\lim_{n \ra \infty} q^{-n} \sum_{f \in \BB(L)_n(\F_q)} P(f) = \sum_{i=0}^\infty (-1)^i
\frac{\langle P,H^i(\AA(L))\rangle}{q^i}
%%%%%%\frac{a(L,P,i)}{(-q)^i}%(-1)^i q^{-i} a(L,P,i).
\end{equation}
In particular, both the limit on the left-hand side and the infinite sum on the right exist.
%\eeq
%approaches
%\beq
%\sum_{i=0}^\infty (-1)^i q^{-i} a(L,P,i)
%\eeq
%as $n \ra \infty$. 
\label{th:limit}
\end{theorem}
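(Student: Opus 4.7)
The plan is to bootstrap the exact finite-$n$ identity supplied by Theorem~\ref{th:chi} into an infinite-series limit by term-by-term stability plus a dominated-convergence argument, where the ``convergent'' hypothesis in Definition~\ref{def:convergent} is exactly what supplies the dominating bound. First I would apply Theorem~\ref{th:chi} with $\chi=P_n$ and divide by $q^n$ to obtain, for each $n\geq 1$, the finite identity
\[
q^{-n}\sum_{f\in\BB(L)_n(\F_q)}P(f) \;=\; \sum_{i=0}^{n}(-1)^{i}\,q^{-i}\,\langle P,H^{i}_{\et}(\AA(L)_n;\Q_\ell)\rangle,
\]
noting that the right-hand sum is automatically finite since $\AA(L)_n$ is an open subvariety of $\A^n$ and so $H^i_{\et}(\AA(L)_n;\Q_\ell)$ vanishes for $i>n$. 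The entire task becomes showing that the right-hand side tends to the claimed infinite series.

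Next I would establish term-by-term convergence. By Proposition~\ref{pr:hifg} each $H^i_{\et}(\AA(L);\Q_\ell)$ is a finitely generated FI-module, and hence by the Corollary immediately preceding Definition~\ref{def:convergent}, the inner product $\langle P,H^i_{\et}(\AA(L)_n;\Q_\ell)\rangle$ is eventually constant in $n$ with stable value $\langle P,H^i(\AA(L))\rangle$. Thus each individual term of the right side of the identity above converges, as $n\to\infty$, to $(-1)^i q^{-i}\langle P,H^i(\AA(L))\rangle$. At this point I would invoke the second form of the convergence hypothesis to produce a subexponential function $F_P(i)$, independent of $n$, with
\[
|\langle P,H^{i}_{\et}(\AA(L)_n;\Q_\ell)\rangle|\leq F_P(i)\qquad\text{for all }n\text{ and }i.
\]
Subexponentiality of $F_P$ and the assumption $q\geq 2$ imply that $q^{-i}F_P(i)$ is summable, which already forces the absolute convergence of $\sum_{i\geq 0}(-1)^i q^{-i}\langle P,H^i(\AA(L))\rangle$, and simultaneously provides a uniform-in-$n$ dominating envelope for the finite sums.

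The conclusion then follows by a Weierstrass-style tail argument. Given $\epsilon>0$, fix $N$ with $\sum_{i>N}q^{-i}F_P(i)<\epsilon/3$. For all $n\geq N$ the tail $\sum_{i>N}(-1)^i q^{-i}\langle P,H^i_{\et}(\AA(L)_n;\Q_\ell)\rangle$ is bounded by $\epsilon/3$ uniformly in $n$, and similarly for the tail of the infinite series. For $n$ large enough all head terms $i\leq N$ have stabilized to their limiting values, so the head discrepancy can be made arbitrarily small. Summing the three contributions gives the required limit.

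The main obstacle is precisely the interchange of $\lim_{n\to\infty}$ with the sum over $i$: without a uniform subexponential bound on $\langle P,H^i_{\et}(\AA(L)_n;\Q_\ell)\rangle$ the right-hand series need not even converge, let alone equal the limit of the left side. The cohomology of a general hyperplane complement can have unbounded Betti numbers in the relevant range, so this hypothesis is genuinely necessary and not a mere technical convenience; it is the key input that turns the ``formal'' stabilization of each term into an honest convergence of point counts. Once the dominating bound is in hand, the rest of the proof is routine.
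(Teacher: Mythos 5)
Your proposal is correct and follows essentially the same route as the paper: the exact finite-$n$ identity from Theorem~\ref{th:chi}, term-by-term stabilization of $\langle P,H^i_{\et}(\AA(L)_n;\Q_\ell)\rangle$ via finite generation, and the uniform subexponential bound $F_P(i)$ from Definition~\ref{def:convergent} to control the tails and justify exchanging the limit with the sum. No further comment is needed.
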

  
The special case of Theorem~\ref{th:limit} when $P=1$ demonstrates the existence of the limit
\beq
\lim_{n\to\infty}q^{-n} |\BB(L)_n(\F_q)|.
\eeq

\begin{proof}
%By Proposition~\ref{pr:hifg}, the cohomology  $H^i_{\et}(\AA(L)_n; \Q_\ell)$ is a finitely generated FI-module.  As mentioned above, it follows from \cite[Theorem 1.6]{CEF} that the character of $S_n$ acting on $H^i_{\et}(\AA(L)_n; \Q_\ell)$ agrees with some fixed character polynomial $Q$ for $n$ large enough, and thus that $\langle  P, H^i_{\et}(\AA(L)_n; \Q_\ell) \rangle$ stabilizes to $a(L,P,i)$ as $n \ra \infty$.  For concision's sake we refer to these stable coefficients simply as $a(i)$ hereafter. 
%It remains to show that the contributions of $\langle P, H^i_{\et}(\AA(L)_n;\Q_\ell) \rangle$ for  ``large" values of $i$ can be neglected.  This is where we use the convergence of $\AA$. 
Given $P$, let $F_P(i)$ be the subexponential function guaranteed by Definition~\ref{def:convergent}. Since $\big|\langle P,H^i_{\et}(\AA(L)_n;\Q_\ell)\rangle\big|\leq F_P(i)$ for all $n$, the stable multiplicity is also bounded: $\big|\langle P,H^i(\AA(L))\rangle\big|\leq F_P(i)$. Since  $F_P(i)$ is subexponential, the sum $\sum_{i=0}^\infty(-q)^{-i}\langle P,H^i(\AA(L))\rangle$ on the right side of \eqref{eq:convergentlimit} is absolutely convergent. 

By Theorem~\ref{th:chi}, the terms appearing in the limit on the left side of \eqref{eq:convergentlimit} are given by
\[
q^{-n}\sum_{f\in \BB(L)_n(\F_q)}P(f)=\sum (-q)^i\langle P,H^i_{\et}(\AA(L)_n;\Q_\ell)\rangle
\]
Given a threshold $I\geq 0$, choose $N$ large enough so that for all $n\geq N$ and all $i\leq I$ we have
\[ \langle P, H^i_{\et}(\AA(L)_n; \Q_\ell) \rangle= \langle P, H^i(\AA(L)) \rangle \]
Therefore for any $n\geq N$, the difference between the $n$th term on the left of \eqref{eq:convergentlimit} and the claimed limit is equal to
\[\sum_{i=I+1}^\infty (-q)^{-i}\big( \langle P, H^i(\AA(L))-\langle P,H^i_{\et}(\AA(L)_n;\Q_\ell)\rangle\big)\]
In absolute value, this sum is bounded by 
$\sum_{i=I+1}^\infty (F_P(i) + F_P(i)) q^{-i}$. The convergence of the series $\sum F_P(i)/q^i$ means that this bound goes to zero as $I\to \infty$.Therefore by taking $n$ sufficiently large, the left side of \eqref{eq:convergentlimit} approaches the right side, as desired. \end{proof}

\medskip

%\begin{cor}  For every finite field $\F_q$, the probability that a random monic squarefree polynomial of degree $n$ has no three roots in arithmetic progression approaches a limit as $n \ra \infty$.
%\label{cor:rootsap}
%\end{cor}

The problem, of course, is to determine to what extent the FI-CHAs of interest satisfy the convergence condition in Definition~\ref{def:convergent}.    For example, is this condition satisfied for the FI-CHA $\AA(x_1-x_2, x_1-2x_2+x_3)$ parametrizing squarefree polynomials with no three roots in arithmetic progression?  Here our knowledge is somewhat lacking.  Below will give many examples that are convergent.  It is possible that all FI-CHAs defined over finite fields are convergent, but we emphasize that a generic FI-CHA over $\C$ should \emph{not} be convergent, so if true this property is a special feature of FI-CHAs over finite fields.

\para{Stable multiplicities may depend on $p$} We conclude this section with a warning. In the next section, we consider the FI-CHA $\AA(x_1-x_2)$ over $\F_p$, and find that the stable multiplicities $\langle P,H^i(\AA(x_1-x_2))\rangle$ do not depend on the characteristic $p$. In general, if we start with any FI-CHA defined over the integers, we can reduce it modulo any prime $p$ to get an FI-CHA defined over $\F_p$. However, in general one should \emph{not} expect the stable coefficients $\langle P, H^i(\AA(L))$ to be independent of the prime $p$, as we now explain.

The cohomology of the complement of a hyperplane arrangement is determined by the combinatorics of the intersection lattice of the hyperplanes. This lattice consists of all the subspaces arising as intersections of various hyperplanes, under the relation of containment. If this intersection lattice is different in different characteristics, one should not expect the cohomology to remain the same. This situation can and does occur, even for quite natural FI-CHAs, as the following two examples illustrate.

\begin{xample} {\rm For any finite field of characteristic $p\neq 2$, the proportion of monic, squarefree polynomials  with no three roots in arithmetic progression is strictly less than $1$. But over a field of characteristic 2 this proportion is 1, since three distinct numbers cannot be in arithmetic progression modulo 2: if $x_2=x_1+y$ and $x_3=x_2+y$, then $x_3=x_1+2y=x_1$. In terms of the FI-CHA $\AA(x_1-x_2,x_1-2x_2+x_3)$, this can be seen from the fact that the form $x_i - 2x_j + x_k$ \emph{coincides} with $x_i - x_k$ in characteristic $2$, so the intersection lattice is obviously not the same as  in characteristic $0$.}
\end{xample}

\begin{xample}
{\rm We emphasize that the differences of intersection lattices need not be merely a matter of some finite set of primes being ``bad''. Consider the FI-CHA $\AA(x_1-2x_2)$.  The hyperplanes associated to $\AA(x_1-2x_2)_n$ include
\beq
x_1 - 2x_2=0,\  x_2 - 2x_3=0,\, \ldots,\  x_n - 2x_1=0
\eeq
Over $\C$, these $n$ hyperplanes intersect transversely: indeed their intersection is the subspace where \[x_1=2x_2=4x_3=\cdots=2^{n-1}x_n=2^nx_1,\] which is clearly trivial. The same argument shows that these hyperplanes remain transverse  whenever $2^n$ is not congruent to $1$ mod $p$, which is the case for all but finitely many $p$.  But when $p | (2^n - 1)$, the last linear equation is redundant with the first $n-1$, and so the intersection of the $n$ hyperplanes has codimension at most $n-1$.  Of course, every prime $p$ is a divisor of $2^n - 1$ for {\em some} choice of $n$, namely the multiplicative order of 2 in $\F_p^\times$. Since the FI-CHA $\AA(x_1-2x_2)$ involves all $n$ simultaneously, one should not expect the intersection lattice attached to $\AA(x_1-2x_2)$ modulo $p$ to be the same as that of $\AA(x_1-2x_2)$ modulo $p'$ unless $2$ has the same multiplicative order mod $p$ and mod $p'$, a very unusual occurrence.}
\end{xample}

\section{Statistics of squarefree polynomials and the cohomology of the pure braid group}

\label{s:purebraid}

In this section we explain how to go back and forth between the answers to statistical questions about squarefree polynomials over a finite field, on the one hand, and the FI-modules arising from the cohomology of the pure braid group, on the other.  This line of reasoning  (without the connection to FI-modules) was initiated by Lehrer;  see e.g.\ \cite{lehrer:survey,kisinlehrer,lehrer:discriminantvarieties}. The counting theorems presented here are for the most part not new.  The main point here is to elucidate 
the close connection between the asymptotics of arithmetic statistics over function fields and the FI-structure on the cohomology of the pure braid group. % This viewpoint lets us prove general theorems on these asymptotics in a unified way.

\subsection{Cohomology of the pure braid group}
Recall that $\PConf=\AA(x_1-x_2)$ is the $\FI^\op$-scheme consisting of hyperplane complements $\PConf_n=\A^n-\bigcup\{x_i=x_j\}$, whose points parametrize ordered tuples of distinct points in $\A^1$. The quotient $\Conf_n=\PConf_n/S_n=\BB(x_1-x_2)_n$ is the space of monic, squarefree,  degree $n$ polynomials.

In this specific case, the intersection lattice of the hyperplane arrangement $\PConf_n$ can be computed directly. The subspaces of $\A^n$ arising as intersections of the hyperplanes $x_i=x_j$ are in bijection with set partitions of the set $[n]=\{1,\ldots,n\}$. Given a partition $[n]=S_1\sqcup \cdots \sqcup S_m$, the corresponding subspace is defined by the equations $x_i=x_j$ whenever $i$ and $j$ lie in the same block $S_k$. For example, the partition $[5]=\{1,4\}\sqcup \{2,3,5\}$ corresponds to the subspace where $x_1=x_4$ and $x_2=x_3=x_5$.

This description shows in particular that the intersection lattice of $\PConf_n$ is the same whether the hyperplanes are considered over $\C$ or over a finite field $\F_q$.  It follows from the results of Lehrer in \cite{lehrer} that the comparison map
\beq
H^i_{\et}({\PConf_n}_{/\Fqbar};\Q_\ell)\to H^i(\PConf_n(\C);\Q_\ell)
\eeq
is an isomorphism of $S_n$-representations. In fact, the naturality of the FI-maps $\PConf_n\to \PConf_m$ implies that there is an isomorphism of FI-modules \beq
H^i_{\et}(\PConf_{/\Fqbar};\Q_\ell) \approx H^i(\PConf(\C);\Q_\ell).
\eeq

%\para{Cohomology of pure braid group}
Over $\C$, a monic squarefree degree-$n$ polynomial is determined by its set of roots, an unordered set of $n$ distinct points in the complex plane. Therefore the complex points $\Conf_n(\C)$ can be identified with the configuration space of $n$ distinct points in the plane. This space is well-known to be a $K(\pi,1)$ (Eilenberg-Mac Lane) space with fundamental group the \emph{braid group} $B_n$. In particular, this means that $H^i(\Conf_n(\C))\approx H^i(B_n)$. The finite cover $\PConf_n(\C)$ is also a $K(\pi,1)$ space, with fundamental group $\pi_1(\PConf_n(\C))=P_n$ the \emph{pure braid group}, which sits inside the braid group as an index-$n!$ subgroup:
\begin{equation}
\label{eq:Pnses}
1\to P_n\to B_n\to S_n\to 1
\end{equation} In the same way, the cohomology $H^i(\PConf_n(\C);\Q_\ell)$ can identified with the group cohomology of the pure braid group $P_n$. This identification is $S_n$-equivariant with respect to the action of $S_n$ on $H^i(P_n)$ coming from \eqref{eq:Pnses}. Therefore Theorem~\ref{th:chi} takes the following form.

\begin{prop}[{\bf Twisted Grothendieck--Lefschetz for \boldmath$\Conf_n$}]
For each prime power $q$, each positive integer $n$, and each character polynomial $P$, we have 
\beq
\sum_{f \in \Conf_n(\F_q)} P(f) = \sum_{i=0}^n (-1)^i q^{n-i} \langle P, H^i(P_n;\Q) \rangle.
\eeq
\label{pr:exactcount}
\end{prop}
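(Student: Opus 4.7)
The plan is to recognize Proposition~\ref{pr:exactcount} as the concrete incarnation of Theorem~\ref{th:chi} for the specific FI-CHA $\AA(x_1-x_2)$, where $\AA(L)_n = \PConf_n$ and $\BB(L)_n = \Conf_n$. So the work is really just bookkeeping: match each ingredient in the general point-counting formula to its avatar in the pure braid setting.

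First, I would translate the polynomial statistic $P(f)$ appearing on the left-hand side into the character-theoretic quantity $\chi(f)$ that Theorem~\ref{th:chi} accepts. Given $f(T) \in \Conf_n(\F_q)$, the Frobenius $\Frob_q$ permutes the $n$ roots of $f$ in $\Fqbar$, producing a permutation $\sigma_f \in S_n$ (well-defined up to conjugacy). As noted in the introduction, the length-$k$ orbits of $\sigma_f$ are in bijection with the degree-$k$ irreducible factors of $f$, i.e.\ $d_k(f) = c_k(\sigma_f)$. Consequently,
\[ P(f) = P(d_1(f), d_2(f), \ldots) = P(c_1(\sigma_f), c_2(\sigma_f), \ldots) = P_n(\sigma_f), \]
so if we set $\chi = P_n$, the left-hand sides of Proposition~\ref{pr:exactcount} and Theorem~\ref{th:chi} coincide.

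Next I would apply Theorem~\ref{th:chi} directly, yielding
\[ \sum_{f \in \Conf_n(\F_q)} P(f) = \sum_{i \geq 0} (-1)^i q^{n-i} \langle P_n, H^i_{\et}(\PConf_n; \Q_\ell)\rangle. \]
Two cleanups remain. First, identify the étale cohomology with group cohomology of the pure braid group. As discussed just before Proposition~\ref{pr:exactcount}, Lehrer's result (together with naturality in $\FI^{\op}$) gives an $S_n$-equivariant comparison isomorphism $H^i_{\et}(\PConf_n; \Q_\ell) \approx H^i(\PConf_n(\C); \Q_\ell)$, and the $K(\pi,1)$ property of $\PConf_n(\C)$ with $\pi_1 = P_n$ identifies the latter with $H^i(P_n; \Q)$; these identifications are compatible with the $S_n$-action from \eqref{eq:Pnses}. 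So $\langle P_n, H^i_{\et}(\PConf_n; \Q_\ell)\rangle = \langle P, H^i(P_n; \Q)\rangle$ in the notation of the proposition.

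Finally, I would truncate the sum at $i = n$: since $\PConf_n$ is a smooth affine variety of complex dimension $n$, $H^i(\PConf_n(\C); \Q) = 0$ for $i > n$, so the terms with $i > n$ vanish. Assembling these three steps gives the desired identity. I do not anticipate a genuine obstacle here, since every piece is a direct specialization of machinery already developed; the only thing to be careful about is keeping the $S_n$-equivariance intact through the comparison and transfer isomorphisms, but this is handled by the naturality already recorded in the passage preceding the proposition.
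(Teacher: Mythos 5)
Your proposal is correct and follows essentially the same route as the paper: Proposition~\ref{pr:exactcount} is obtained there exactly by specializing Theorem~\ref{th:chi} to the FI-CHA $\AA(x_1-x_2)$, using the identity $d_k(f)=c_k(\sigma_f)$, the comparison isomorphism $H^i_{\et}(\PConf_n;\Q_\ell)\approx H^i(\PConf_n(\C);\Q_\ell)$, and the $K(\pi,1)$ identification with $H^i(P_n;\Q)$. Your remark that the sum truncates at $i=n$ by vanishing of the cohomology of an affine variety above its complex dimension is a harmless addition consistent with the paper's conventions.
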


For example, when $P = 1$, the inner product $\langle P, H^i(P_n;\Q) \rangle$ is the  multiplicity of the trivial $S_n$-representation in $H^i(\PConf_n(\C);\Q)$, which by transfer is the  dimension of $H^i(\Conf_n(\C);\Q)$. Arnol'd proved that for $n\geq 2$ this dimension is $1$ for $i=0,1$ and $0$ for $i > 1$.  So one recovers from Proposition \ref{pr:exactcount} the well-known fact that for all $n \geq 2$,
\beq
\left|\Conf_n(\F_q)\right|= \sum_{f \in \Conf_n(\F_q)} 1 = q^n - q^{n-1}.
\eeq
In other words, the number of squarefree monic polynomials of degree $n$ equals  $q^n - q^{n-1}$. We give another exact computation of this sort in Section~\ref{sec:standard} below.

We can also describe the limits of such statistics as $n\to \infty$ as in Theorem~\ref{th:limit}. To do this, we must first verify that the FI-CHA $\PConf_n$ is convergent.

\begin{prop}[\boldmath{$\PConf_n$} {\bf is a convergent FI-CHA}]  
For any field $k$, the FI-CHA $\PConf_n$ over $k$ is convergent in the sense of Definition~\ref{def:convergent}.
\label{pr:PConfconvergent}
\end{prop}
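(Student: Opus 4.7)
The plan is to verify the second equivalent condition of Definition~\ref{def:convergent}: for each character polynomial $P \in \Q[X_1, X_2, \ldots]$, exhibit a subexponential function $F_P(i)$ (independent of $n$) with $|\langle P, H^i_{\et}(\AA(x_1-x_2)_n; \Q_\ell)\rangle| \leq F_P(i)$. First I would apply the comparison isomorphism $H^i_{\et}(\PConf_n; \Q_\ell) \approx H^i(\PConf_n(\C); \Q_\ell)$ of $S_n$-representations, recorded at the start of Section~\ref{s:purebraid}, reducing everything to bounding multiplicities in the complex Betti cohomology.

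Second, I would use the FI-module structure to decouple stable and unstable ranges. Since $H^i(\PConf; \Q)$ is a finitely generated FI-module by Proposition~\ref{pr:hifg}, Proposition~\ref{pr:charpolyexpect} combined with \cite[Theorem~4.8]{CEF} implies that $\langle P, H^i(\PConf_n; \Q)\rangle$ equals a stable value $a_i(P)$ for all $n \geq 2i + \deg P$. Thus the task splits in two: bound $|a_i(P)|$ subexponentially in $i$, and separately bound the finitely many unstable values at $n < 2i + \deg P$. The unstable range is handled by the crude estimate $|\langle P_n, V\rangle_{S_n}| \leq \max_{\sigma} |P_n(\sigma)| \cdot \dim V$, using that $|P_n(\sigma)|$ is bounded by a polynomial in $n$ of degree $\deg P$ and that $\dim H^i(\PConf_n;\Q) = c(n, n-i)$ by Arnol'd's formula; in the range $n \leq 2i + \deg P$ this yields an a priori bound depending only on $i$.

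Third, I would bound the stable value $|a_i(P)|$ itself. The structural inputs are Proposition~\ref{pr:kim}(ii), which exhibits $H^i(\PConf_n;\Q)$ as a quotient of $\bigwedge^i H^1(\PConf_n;\Q)$, and Arnol'd's identification of $H^1(\PConf_n;\Q)$ with the permutation representation on unordered pairs, whose character is given for all $n$ by the character polynomial $\binom{X_1}{2} + X_2$ of degree $2$. It follows that $H^i(\PConf;\Q)$ is an FI-module of weight $\leq 2i$, so its stable character polynomial $\chi^i$ has degree $\leq 2i$ in the grading of Proposition~\ref{pr:charpolyexpect}. Combining this with the explicit Lehrer--Solomon decomposition of $H^i(P_n;\Q)$ as a direct sum of representations induced from centralizers of permutations with $n-i$ cycles, one sees that for a fixed $P$ only contributions from partitions whose non-unit parts have total size bounded in terms of $\deg P$ survive in the stable limit. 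A careful accounting yields a polynomial-in-$i$ bound $|a_i(P)| \leq C_P \cdot (i+1)^{\deg P}$, which is certainly subexponential.

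The main obstacle is the final step: a naive estimate via $\bigwedge^i H^1(\PConf;\Q)$ gives only an exponential bound of the form $C^i$, which is insufficient, since Theorem~\ref{th:limit} must apply for every prime power $q \geq 2$, including $q < C$. The improvement to polynomial (or at least genuinely subexponential) growth for fixed $P$ uses the fine structure of the Lehrer--Solomon decomposition to identify precisely which representations in $\bigwedge^i H^1$ survive in the quotient $H^i$, and is the delicate step; it may also be approached via Hyde-type closed forms for the $S_n$-character of $H^i(P_n;\Q)$.
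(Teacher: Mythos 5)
Your strategy of verifying condition 2 of Definition~\ref{def:convergent} by splitting into a stable range $n \geq 2i + \deg P$ and an unstable range $n < 2i + \deg P$ has a genuine gap in the unstable range. The crude estimate $|\langle P_n, V\rangle_{S_n}| \leq \max_\sigma |P_n(\sigma)|\cdot \dim V$ with $V = H^i(\PConf_n;\Q)$ produces a bound that does depend only on $i$, but it is not \emph{subexponential} in $i$, which is what the definition demands. Indeed $\dim H^i(\PConf_n;\Q) = c(n,n-i)$ is an unsigned Stirling number, and already for $n = 2i$ one has $c(2i,i) \geq (2i-1)!! \sim \sqrt{2}\,(2i/e)^i$, which is superexponential. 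So the unstable range is not a finite nuisance to be absorbed into a constant; it is exactly where the dimension of the cohomology is largest relative to $i$, and the whole difficulty of the proposition lives there. (Your stable-range step also leans on "a careful accounting" of the Lehrer--Solomon summands that is not carried out, but that part is at least repairable along the lines you indicate.)

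The paper avoids this entirely by verifying condition 1 instead: it bounds $\dim H^i(P_n;\C)^{S_{n-a}}$ uniformly in $n$. The Lehrer--Solomon decomposition writes $H^i(P_n;\C) = \bigoplus_\mu \Ind_{Z(c_\mu)}^{S_n}(\xi_\mu)$ over conjugacy classes with $n-i$ cycles; any such $c_\mu$ has at least $n-2i$ fixed points, so $Z(c_\mu) \supseteq S_{n-2i}$, and the $S_{n-a}$-invariants of each summand are bounded by the number of double cosets $|S_{n-a}\backslash S_n/S_{n-2i}| = O(i^a)$, independently of $n$. Multiplying by the number $p(i)$ of summands gives a single subexponential bound valid for \emph{all} $n$, with no stable/unstable dichotomy. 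Your argument can be repaired in the same spirit: apply Frobenius reciprocity to each Lehrer--Solomon summand to get $|\langle P, \Ind_{Z(c_\mu)}^{S_n}\xi_\mu\rangle| \leq \max_{\sigma \in S_n}|P_n(\sigma)|$, note that in the range $n < 2i+\deg P$ this maximum is polynomial in $i$ of degree $\deg P$, and sum over the $p(i)$ summands. The point is that the bound must come from the induced-representation structure (or equivalently from invariants under large Young subgroups), not from the raw dimension of $H^i$.
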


\begin{proof} By the discussion above, it suffices to prove that for each $a$ the invariant cohomology $H^i(P_n;\C)^{S_{n-a}}$ is bounded uniformly in $n$ and subexponentially in $i$. Lehrer-Solomon in \cite{lehrersolomon} provide an explicit description of $H^i(P_n;\C)$ as a sum
of induced representations
\[H^i(P_n;\C)=\bigoplus_\mu \Ind_{Z(c_\mu)}^{S_n}(\xi_\mu)\]
where $\mu$ runs over the set of conjugacy classes in $S_n$ of permutations having $n-i$ cycles, $c_\mu$ is any element of the conjugacy class $\mu$, and $\xi_\mu$ is a one-dimensional character of the centralizer  $Z(c_\mu)$ of $c_\mu$ in $S_n$, described explicitly below. 

We will discuss this description in great detail in the next section, but for now it suffices to remark that a permutation $c_\mu$ decomposing into $n-i$ cycles must have at least $n-2i$ fixed points. This implies that the centralizer $Z(c_\mu)$ contains the subgroup $S_{n-2i}$. Therefore  the dimension of the $S_{n-a}$-invariants in the induced representation $\Ind_{Z(c_\mu)}^{S_n}(\xi_\mu)$
is bounded above by the number of double cosets in $S_{n-a} \bs S_n / S_{n-2i}$, which is polynomial in $i$.  Indeed, it is equal to the number of maps $f\colon \{1,\ldots,a\}\to \{1,\ldots,2i,\star\}$ such that $|f^{-1}(j)|\leq 1$ and $|f^{-1}(\star)|\leq n-2i$; for fixed $a$ this is bounded by a constant times the number of subsets of $\{1,\ldots,2i,\star\}$ of size $\leq a$, which is $O(i^a)$.

The summands contributing to $H_i$ correspond to conjugacy classes $c_\mu$ in $S_n$ decomposing into $n-i$ cycles, which are in bijection with partitions on $i$ (by recording 
$\text{length}-1$ for each cycle). Since the number of partitions of $i$ is subexponential in $i$, and the contribution of each summand to $H^i(P_n;\C)^{S_{n-a}}$ is polynomial in $i$, this completes the proof.
\end{proof}

%In particular, the left-hand side is a {\em polynomial} in $q$.  This immediately implies a ``uniform" version of Proposition~\ref{pr:limitexists}:
We proved in \cite[Theorem~4.7]{CEF} that $H^i(\PConf_n(\C);\Q)$ is given for all $n\geq 0$ by a single character polynomial $Q$ of degree $\leq 2i$. Therefore 
Proposition~\ref{pr:charpolyexpect} yields the first claim of the following proposition. 
Since $\PConf_n$ is a convergent FI-CHA, Theorem~\ref{th:limit} gives the second claim, which relates the limiting statistics of squarefree polynomials with the representation-stable cohomology of the pure braid group.
\begin{prop}  For any character polynomial $P$, the inner product $\langle P,H^i(\PConf_n(\C);\Q)\rangle$ is independent of $n$ for $n\geq 2i+\deg P$. Furthermore, if we let  \[\langle P,H^i(\PConf(\C))\rangle \coloneq\lim_{n\to \infty}\langle P, H^i(\PConf_n(\C);\Q)\rangle=\lim_{n\to \infty}\langle P, H^i(P_n;\Q)\rangle,\] then for each prime power $q$, we have:
\beq
\lim_{n \ra \infty} q^{-n} \sum_{f \in \Conf_n(\F_q)} P(f) = \sum_{i=0}^\infty (-1)^i \frac{\langle P,\ H^i(\PConf(\C))\rangle}{q^{i}}
\eeq
In particular, both the limit on the left and the series on the right converge.\label{pr:purebraidlimit}
\end{prop}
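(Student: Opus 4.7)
The plan is to assemble the proposition from two previously established pillars: the character-polynomial description of $H^i(\PConf_n(\C);\Q)$ coming from \cite{CEF}, and the convergence machinery of Theorem~\ref{th:limit} combined with Proposition~\ref{pr:PConfconvergent}. There is essentially no new content to produce here; the whole proof is bookkeeping that matches up the hypotheses of results already in hand.

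For the stable range in the first claim, I would invoke \cite[Theorem~4.7]{CEF}, which states that the character of the $S_n$-representation $H^i(\PConf_n(\C);\Q)$ is, uniformly in $n$, given by a single character polynomial $Q_i \in \Q[X_1, X_2, \ldots]$ of degree at most $2i$. With this in hand, $\langle P, H^i(\PConf_n(\C);\Q)\rangle$ is simply the inner product $\langle P_n, (Q_i)_n \rangle_{S_n}$ of two character polynomials, and Proposition~\ref{pr:charpolyexpect} shows this inner product is independent of $n$ once $n \geq \deg P + \deg Q_i$, hence certainly once $n \geq 2i + \deg P$. This also shows that the limit defining $\langle P, H^i(\PConf(\C))\rangle$ exists and equals the stable value.

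For the limiting formula, I would apply Theorem~\ref{th:limit} directly to the FI-CHA $\AA(x_1 - x_2) = \PConf$. The hypothesis of that theorem requires $\PConf$ to be convergent in the sense of Definition~\ref{def:convergent}, but this is precisely Proposition~\ref{pr:PConfconvergent}. After substituting $\BB(x_1-x_2)_n = \Conf_n$ and $\AA(x_1-x_2) = \PConf$ into the conclusion of Theorem~\ref{th:limit}, one reads off exactly the displayed identity, together with convergence of both sides.

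The only place where a genuine obstacle had to be overcome is the convergence condition of Definition~\ref{def:convergent}, without which the right-hand sum could fail to converge even given the term-by-term stabilization from the first claim; but all of that work is already absorbed into Proposition~\ref{pr:PConfconvergent} via the Lehrer--Solomon decomposition, which bounds each summand $\Ind_{Z(c_\mu)}^{S_n}(\xi_\mu)$ invariant-wise using the $S_{n-2i}\subset Z(c_\mu)$ containment and then sums over partitions of $i$. Given these upstream results, the proof of Proposition~\ref{pr:purebraidlimit} is a short citation argument combining \cite[Theorem~4.7]{CEF}, Proposition~\ref{pr:charpolyexpect}, Proposition~\ref{pr:PConfconvergent}, and Theorem~\ref{th:limit}.
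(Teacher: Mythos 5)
Your proposal is correct and is essentially identical to the paper's own proof, which likewise deduces the stable range from \cite[Theorem~4.7]{CEF} together with Proposition~\ref{pr:charpolyexpect}, and obtains the limiting formula by applying Theorem~\ref{th:limit} to the convergent FI-CHA $\PConf$ via Proposition~\ref{pr:PConfconvergent}. The only point left tacit in your write-up is the identification of $H^i_{\et}(\PConf_n;\Q_\ell)$ with $H^i(\PConf_n(\C);\Q)$ needed to translate the conclusion of Theorem~\ref{th:limit} into singular cohomology, but this comparison isomorphism is established at the start of the section and is implicitly being used by the paper as well.
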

In  Section~\ref{sec:Lfunctions} we compute the limiting values of such statistics for some explicit character polynomials $P$. Moreover, using Proposition~\ref{pr:purebraidlimit} it is also possible to \emph{deduce} results about the representation-stable cohomology from these computations.

\subsection{The standard representation}
\label{sec:standard}
%In this section, we give an exact computation of the multiplicity $\langle X_1,H^i(P_n)\rangle$ for all $i$ and all $n$. By Proposition~\ref{pr:exactcount}, this will let us compute
%\beq
%\sum_{f \in \Conf_n(\F_q)} X_1(f),
%\eeq
%the total number of linear factors among all squarefree polynomials of degree $n$. Moreover Proposition~\ref{pr:purebraidlimit} tells us that the ration between this total number and $q^n$ converges to a limit as $n\to \infty$. Since the number of such polynomials is $q^n-q^{n-1}$, this lets us compute the {\em average} number of linear factors of a squarefree polynomial over $\F_q$, as the degree of the polynomial goes to $\infty$, as
%\beq
%\lim_{n \ra \infty} \frac{ \sum_{f \in \Conf_n(\F_q)} X_1(f)} {\sum_{f \in \Conf_n(\F_q)}1 } = 
%\frac{1}{1-q^{-1}}\cdot \lim_{n \ra \infty}  q^{-n}  \sum_{f \in \Conf_n(\F_q)} X_1(f).
%\eeq

The number of linear factors of a polynomial $f(T)\in\F_q[T]$ is the number of fixed points of the permutation $\sigma_f$  induced by Frobenius on the roots of $f(T)$.  If one thinks of this permutation as something like a ``random permutation", one would expect the average number of fixed points to be $1$. 

This expectation might be supported by the fact that an \emph{arbitrary} polynomial has precisely 1 linear factor on average. To see this, note that for fixed $x\in \F_q$, the number of polynomials $f(T)$ with $f(x)=y$ is the same for every $y\in \F_q$ (consider the family $f(T)+z$ for $z\in \F_q$). Therefore for each $x$, the chance that $f(x)=0$ is $1/q$; summing over the $q$ possible roots $x$ shows that the average number of roots overall is $1$. 

However, we shall see that the average number of linear factors of a \emph{squarefree} polynomial is in fact not $1$, but approaches
\begin{equation}
\label{eq:X1limit}
\frac{1}{1+\frac{1}{q}} = 1 - \frac{1}{q} + \frac{1}{q^2} - \frac{1}{q^3} + \ldots
\end{equation}
as $n\to \infty$. In other words, a squarefree polynomial has slightly {\em fewer} linear factors on average than do arbitrary polynomials.  On reflection, one can see why:  the squarefree, degree $n$ polynomials that are multiples of a linear polynomial $L$ can all be written as $Lg$ with $\deg g = n-1$, but there is a further condition on $g$ beyond the requirement that it be squarefree; it must be coprime to $L$.  We refer to the work of Arratia, Barbour, and Tavare~\cite{abt} for a much more refined analysis of the distribution on permutations coming from random polynomials over $\F_q$; in short, one has that these permutations are ``equidistributed with respect to long cycles."  Statistics like $X_1$, on the other hand, which are sensitive to (very!) short cycles, may diverge from the corresponding statistics for the uniform distribution on permutations, as we see in the present case.

We begin by giving a precise computation of the average number of linear factors. This establishes the formula (2) from Table A in the introduction. As we will see, this computation is quite involved; in the next section  we will see that an answer only as $n\to \infty$ can be obtained more quickly.

\begin{proposition}[{\bf Expected number of linear factors}]
The expected number of linear factors for a monic, square free, degree $n$ polynomial $f(T)\in \F_q[T]$ is $1-\frac{1}{q}+\frac{1}{q^2}-\cdots\pm\frac{1}{q^{n-2}}$.
\label{pr:exactlinear}
\end{proposition}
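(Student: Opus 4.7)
The plan is to apply the twisted Grothendieck--Lefschetz formula from Proposition~\ref{pr:exactcount} to the character polynomial $P = X_1$. First I would unpack why this is the correct choice: for $f \in \Conf_n(\F_q)$, the associated permutation $\sigma_f$ of the roots (induced by $\Frob_q$) satisfies $X_1(\sigma_f) = c_1(\sigma_f)$, which is the number of roots of $f$ lying in $\F_q$, and hence the number of linear factors of $f$ over $\F_q$. The character polynomial $X_1$ agrees with the character of the permutation representation $\Q^n$ of $S_n$, so $\langle X_1, H^i(P_n;\Q)\rangle$ is the multiplicity of $\Q^n$ in $H^i(P_n;\Q)$.

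Next I would invoke the computation of these multiplicities, which is the content of Proposition~\ref{pr:standard}: for $i \geq 1$ we have $\langle X_1, H^i(P_n;\Q)\rangle = 0$ if $n \leq i$, equals $1$ if $n = i+1$, and equals $2$ if $n \geq i+2$; while $\langle X_1, H^0(P_n;\Q)\rangle = 1$ since $H^0(P_n;\Q) \cong \Q$ is the trivial representation (contained once in $\Q^n$). Substituting into Proposition~\ref{pr:exactcount} gives
\[
\sum_{f \in \Conf_n(\F_q)} X_1(f) = q^n - 2q^{n-1} + 2q^{n-2} - 2q^{n-3} + \cdots + 2(-1)^{n-2} q^2 + (-1)^{n-1} q,
\]
where the coefficient $-2$ first appears from the $i=1$ term (valid once $n \geq 3$), the last ``$2$'' term from $i = n-2$, and the final $\mp q$ term from $i = n-1$ (where the multiplicity is only $1$, not $2$).

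Finally I would divide by $|\Conf_n(\F_q)| = q^n - q^{n-1}$ and verify the identity
\[
\frac{q^n - 2q^{n-1} + 2q^{n-2} - \cdots + 2(-1)^{n-2} q^2 + (-1)^{n-1} q}{q^n - q^{n-1}} = 1 - \frac{1}{q} + \frac{1}{q^2} - \cdots + \frac{(-1)^{n-2}}{q^{n-2}}.
\]
This is a purely formal manipulation: multiplying the right-hand side by $q^{n-1}(q-1)$ and expanding $\sum_{k=0}^{n-2}(-1)^k q^{n-1-k}(q-1) = \sum_{k=0}^{n-2}(-1)^k(q^{n-k} - q^{n-1-k})$ telescopes to produce exactly the numerator on the left.

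The main (external) obstacle is the cohomological computation in Proposition~\ref{pr:standard}; once that is in hand, the present proposition is a short deduction. The small bookkeeping subtlety worth flagging is that the ``$2$'' pattern of multiplicities breaks at both ends of the range (trivial representation at $i=0$, and the boundary case $n = i+1$ at the top), which is precisely what produces the factor of $(q-1)$ that cancels against $|\Conf_n(\F_q)|$ and leaves the clean alternating geometric-sum formula.
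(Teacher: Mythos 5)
Your proposal is correct and follows essentially the same route as the paper: apply Proposition~\ref{pr:exactcount} with $P=X_1$, feed in the multiplicities from Proposition~\ref{pr:standard} (with the boundary adjustments at $i=0$ and $i=n-1$), and verify the resulting algebraic identity after dividing by $\left|\Conf_n(\F_q)\right|=q^n-q^{n-1}$. The only cosmetic difference is in the final manipulation --- you clear denominators and telescope, whereas the paper multiplies the numerator by $1+q^{-1}$ and factors --- but these are the same computation.
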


The proof of Proposition~\ref{pr:exactlinear} rests on the following computation of $\langle X_1,H^i(P_n;\Q)\rangle$.\begin{proposition}
\label{pr:standard}
  For each $i\geq 1$,
  \[\big\langle X_1,H^i(P_n;\Q)\big\rangle =\begin{cases}0&\text{ for }n\leq i\\1&\text{ for }n=i+1\\2&\text{ for }n\geq i+2\end{cases}\]
\end{proposition}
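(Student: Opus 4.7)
The plan is to reinterpret $\langle X_1, H^i(P_n;\Q)\rangle$ as the Betti number of a configuration space of the punctured plane, and then compute it by induction. On $S_n$ the character polynomial $X_1$ specializes to the permutation character of $\Q^n \cong \Ind_{S_{n-1}}^{S_n} 1$, so Frobenius reciprocity gives
\[
\langle X_1, H^i(P_n;\Q)\rangle = \dim H^i(P_n;\Q)^{S_{n-1}},
\]
where $S_{n-1}\subset S_n$ is the stabilizer of the index $1$. The space $\PConf_n(\C)$ is a $K(P_n,1)$ on which $S_{n-1}$ acts freely, so the transfer isomorphism identifies the right-hand side with $\dim H^i(\PConf_n(\C)/S_{n-1};\Q)$. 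A point of $\PConf_n(\C)/S_{n-1}$ is a pair $(x_1,\{x_2,\ldots,x_n\})$ of distinct complex numbers with $x_1$ marked, and the translation $(x_1,\{x_2,\ldots,x_n\}) \mapsto (x_1,\{x_j-x_1\}_{j\geq 2})$ is a homeomorphism
\[
\PConf_n(\C)/S_{n-1} \;\xrightarrow{\ \cong\ }\; \C \times \Conf_{n-1}(\C^\ast).
\]
Since $\C$ is contractible, this reduces the proposition to the claim that
\[
\dim H^i(\Conf_k(\C^\ast);\Q) = \begin{cases} 1 & i = 0 \text{ or } i = k,\\ 2 & 1 \leq i \leq k-1,\\ 0 & i > k, \end{cases}
\]
applied with $k = n-1$.

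I would prove this claim by induction on $k$, with easy base cases $\Conf_0(\C^\ast) = \{\ast\}$ and $\Conf_1(\C^\ast) = \C^\ast$. For the inductive step, the map $g(T) \mapsto T\cdot g(T)$ realizes $\Conf_{k-1}(\C^\ast)$ as a smooth closed subvariety of complex codimension $1$ in $\Conf_k(\C)$, whose open complement is exactly $\Conf_k(\C^\ast)$. The associated Gysin long exact sequence
\[
\cdots \to H^{i-2}(\Conf_{k-1}(\C^\ast);\Q) \to H^i(\Conf_k(\C);\Q) \to H^i(\Conf_k(\C^\ast);\Q) \to H^{i-1}(\Conf_{k-1}(\C^\ast);\Q) \to H^{i+1}(\Conf_k(\C);\Q) \to \cdots,
\]
combined with Arnol'd's theorem that $H^0(\Conf_k(\C);\Q) = H^1(\Conf_k(\C);\Q) = \Q$ and higher cohomology vanishes for $k \geq 2$, collapses into the short exact sequence $0 \to \Q \to H^1(\Conf_k(\C^\ast);\Q) \to \Q \to 0$ in degree $1$ and isomorphisms $H^i(\Conf_k(\C^\ast);\Q) \cong H^{i-1}(\Conf_{k-1}(\C^\ast);\Q)$ for $i \geq 2$. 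The claimed dimensions then fall out by a direct case check.

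The main technical obstacle is the bookkeeping in the Gysin induction: the sequence behaves differently in low degrees (where Arnol'd's terms intervene) versus $i \geq 2$ (where they vanish, yielding bare isomorphisms), and the inductive pattern of ``top-degree $\Q$ versus interior $\Q^2$'' on $\Conf_{k-1}(\C^\ast)$ must be tracked through the index shift by $1$. Since every group involved has dimension at most $2$, however, a direct case-by-case count closes the induction, and substituting $k = n-1$ recovers the three cases $n \leq i$, $n = i+1$, and $n \geq i+2$ of the proposition.
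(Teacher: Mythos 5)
Your argument is correct, and it is a genuinely different proof from the one in the paper. The paper works representation-theoretically: it invokes the Lehrer--Solomon decomposition $H^i(P_n;\C)=\bigoplus_\mu \Ind_{Z(c_\mu)}^{S_n}(\xi_\mu)$ and, in Lemma~\ref{lem:X1multiplicities}, computes $\langle X_1,\Ind_{Z(c_\mu)}^{S_n}(\xi_\mu)\rangle$ for every conjugacy class by exhibiting the $\xi_\mu$-isotypic vectors in $\C^n$, finding that only $(1\cdots k)$ and $(1\cdots k)(k{+}1\ k{+}2)$ contribute. You instead convert $\langle X_1,H^i(P_n;\Q)\rangle$ into $\dim H^i(\PConf_n(\C)/S_{n-1};\Q)$ via Frobenius reciprocity and transfer, identify that quotient with $\C\times\Conf_{n-1}(\C^\ast)$ by translation, and run a Gysin induction off Arnol'd's theorem; I checked the induction ($H^i(U)\cong H^{i-1}(Z)$ for $i\ge 2$, and the extension $0\to\Q\to H^1(U)\to\Q\to 0$ in degree $1$) and the case bookkeeping under $k=n-1$, and it all closes correctly. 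The trade-off: your route is more elementary (no Lehrer--Solomon) and in effect proves the Kupers--Miller computation of $H^i(\Conf'_n(\C);\Q)$ \emph{first}, deducing the proposition from it -- reversing the paper's logic, which derives that computation as a corollary in the ``Occam's razor'' discussion. On the other hand, the paper's method is uniform in the coefficient system: the same isotypic-component analysis is reused for $\bwedge^2\Q^n$, the sign character, and the $n$-cycle indicator in the prime number theorem argument, whereas your geometric identification is special to $X_1$ (i.e.\ to $S_{n-1}$-invariants) and would not extend painlessly to, say, $\binom{X_1}{2}-X_2$.
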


We will derive Proposition~\ref{pr:standard} from the description by Lehrer-Solomon \cite{lehrersolomon} of $H^*(P_n;\C)$ as a
sum of induced representations, one for each conjugacy class $c_\mu$
in $S_n$. The conjugacy classes $c_\mu$ contributing to $H^i(P_n;\C)$ are those decomposing into $n-i$ cycles. Lehrer-Solomon \cite{lehrersolomon} prove that 
\begin{equation}
\label{eq:hi}
H^i(P_n;\C)=\bigoplus_\mu \Ind_{Z(c_\mu)}^{S_n}(\xi_\mu)
\end{equation}
where the one-dimensional characters $\xi_\mu\colon Z(c_\mu)\to \C^\times$ are described as follows.  Let $\mu_j$ be the number of $j$-cycles in $c_\mu$, so $n=\sum j\cdot \mu_j$.  The centralizer $Z(c_\mu)$ is the product  of wreath
products $\Z/j\Z\wr S_{\mu_j}=(\Z/j\Z)^{\mu_j}\rtimes S_{\mu_j}$, where the
$S_{\mu_j}$ factor acts by permuting the $j$--cycles in the
decomposition.

On each $\Z/j\Z$ factor, the character $\xi_\mu$ sends a generator to $\eta_j=(-1)^{j+1}e^{2\pi
  i/j}$; we will need only that $\eta_j\neq 1$ except when $j=2$, so $\xi_\mu$ is nontrivial on all $\Z/k\Z$ factors with $k\geq 3$. 
  %This is nontrivial except when $j=2$, when it is the trivial representation $\Z/2\Z\to \C^\times$; it is faithful except when $j\equiv 2\bmod{4}$, in which case the kernel has
%order 2.
For $j$ odd the character $\xi_\mu$ is trivial on the subgroup $S_{\mu_j}$, while for $j$
even, $\xi_\mu$ restricts to $S_{\mu_j}$ as the sign representation $S_{\mu_j}\to \{\pm 1\}\subset \C^\times$. As long as $\mu_2>1$, this makes the representation $\xi_\mu$ nontrivial on the $\Z/2\Z \wr
S_{\mu_2}$ factor as well.  We remark that although every representation of $S_n$ can be defined over $\Q$, the characters $\xi_\mu$ cannot be.

Proposition~\ref{pr:standard} is immediate from the following lemma, which shows
that the only two summands of the right-hand side of Equation \eqref{eq:hi} that contribute to $\langle X_1,H^i(P_n;\C)\rangle$ are
$c_\mu=(1\ \cdots i+1)$ and $c_\mu=(1\ \cdots\ i)(i+1\ i+2)$, which contribute when $n\geq i+1$ 
and when $n\geq i+2$, respectively.

\begin{lemma}
\label{lem:X1multiplicities}
  For all $n\geq 1$ and all conjugacy classes $c_\mu$, the inner product $\langle X_1, \Ind_{Z(c_\mu)}^{S_n}(\xi_\mu)\rangle$ equals 0 except in the following cases (the last two entries apply to $k\geq 3$): 
 
 \bigskip 
\begin{centering}
\begin{tabular}{llll}
&$c_\mu$&$\Z(c_\mu)$&\!\!\!\!$\langle X_1,\Ind_{Z(c_\mu)}^{S_n}(\xi_\mu)\rangle_n$\\
\hline
$H^0$&\ $\id$&\ $S_n$&\qquad$=1\qquad n\geq 1$\\
$H^1$&\ $(1\ 2)$&\ $\Z/2\Z\times S_{n-2}$&\qquad$=1\qquad n=2$\\
&&&\qquad$=2\qquad n\geq 3$\\
$H^2$&\ $(1\ 2)(3\ 4)$&\ $\Z/2\Z\wr S_2\times S_{n-4}$&\qquad$=1\qquad n\geq 4$\\
$H^{k-1}$&\ $(1\ \cdots\ k)$&\ $\Z/k\Z\times S_{n-k}$&\qquad$=1\qquad n\geq k$\\
$H^{k}$&\ $(1\ \cdots\ k)(k+1\ k+2)$&\ $\Z/k\Z\times \Z/2\Z\times S_{n-k-2}$&\qquad$=1\qquad n\geq k+2$
\end{tabular}
\end{centering}
%c_\mu&=\id&\langle X_1,\Ind_{S_n}^{S_n}\C\rangle_n=\langle X_1,1\rangle_n&=1\text { for all }n\geq 1\\
%c_\mu&=(1\ 2)&\langle X_1,\Ind_{S_2\times S_{n-2}}^{S_n}\C\rangle_n&=2\text { for all }n\geq 3 (=1 \text{ for }n=2)\\
%c_\mu&=(1\ 2)(3\ 4)&\langle X_1,\Ind_{\Z/2\Z\wr S_2 \times S_{n-4}}^{S_n}\C^{\epsilon_2}\rangle_n&=1\text { for all }n\geq 4\\
%c_\mu&=(1\ \cdots\ k) \text{ for }k\geq 3&\langle X_1,\Ind_{\Z/k\Z\times S_{n-k}}^{S_n}\C^{\eta_k}\rangle_n&=1\text { for all }n\geq k\\
%c_\mu&=(1\ \cdots\ k)(k+2) \text{ for }k\geq 3&\langle X_1,\Ind_{\Z/k\Z\times \Z/2\Z\times S_{n-k-2}}^{S_n}\C^{\eta_k}\rangle_n&=1\text { for all }n\geq k+2\\
\end{lemma}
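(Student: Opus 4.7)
The plan is to compute $\langle X_1, \Ind_{Z(c_\mu)}^{S_n}(\xi_\mu)\rangle$ by two applications of Frobenius reciprocity. Since $X_1(\sigma) = c_1(\sigma)$ is the character of the standard permutation representation $\C[n]$ of $S_n$, we have
\[\big\langle X_1,\ \Ind_{Z(c_\mu)}^{S_n}(\xi_\mu)\big\rangle_{S_n} \ =\ \big\langle \Res^{S_n}_{Z(c_\mu)}\C[n],\ \xi_\mu\big\rangle_{Z(c_\mu)}.\]
As a $Z(c_\mu)$-set, $[n]$ decomposes into one orbit for each $j$ with $\mu_j \ge 1$, namely the set of $j\mu_j$ integers lying in $j$-cycles of $c_\mu$; if $H_j \le Z(c_\mu)$ is the stabilizer of a chosen point in that orbit, the orbit affords $\Ind_{H_j}^{Z(c_\mu)}\mathbb{1}$. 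A second application of Frobenius reciprocity therefore identifies the desired inner product with the number of $j$'s (with $\mu_j \ge 1$) for which $\xi_\mu$ restricts trivially to $H_j$.

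The stabilizer $H_j$ is easy to describe inside $Z(c_\mu) = \prod_k (\Z/k\Z \wr S_{\mu_k})$: for $k \ne j$ it contains the full factor $\Z/k\Z \wr S_{\mu_k}$, while in the $j$-factor $H_j$ reduces to $(\Z/j\Z)^{\mu_j - 1} \rtimes S_{\mu_j - 1}$, obtained by killing the rotation of the distinguished cycle and restricting the $S_{\mu_j}$-action to the remaining $\mu_j - 1$ cycles. Combining this with the explicit formula $\xi_\mu|_{\Z/k\Z}(\text{generator}) = \eta_k$ (which is nontrivial exactly for $k \ge 3$) and the fact that $\xi_\mu|_{S_{\mu_k}}$ is the trivial character for odd $k$ and the sign character for even $k$, triviality of $\xi_\mu|_{H_j}$ becomes the conjunction of: (a) $\mu_k = 0$ for every $k \ge 3$ with $k \ne j$; (b) $\mu_k \le 1$ for every even $k \ne j$; and (c) in the $j$-factor, $\mu_j = 1$ when $j \ge 3$, $\mu_j \le 2$ when $j = 2$, and no constraint when $j = 1$.

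A short case analysis then enumerates the conjugacy classes $\mu$ admitting at least one such $j$, and each admissible $(\mu, j)$ contributes exactly $1$: the identity allows only $j = 1$; a single transposition allows $j = 2$ (for all $n \ge 2$) and also $j = 1$ precisely when $\mu_1 = n - 2 \ge 1$, yielding $1$ for $n = 2$ and $2$ for $n \ge 3$; the class of $(1\ 2)(3\ 4)$ allows only $j = 2$ and requires $n \ge 4$; a single $k$-cycle with $k \ge 3$ allows only $j = k$ and requires $n \ge k$; and a $k$-cycle together with a transposition allows only $j = k$ and requires $n \ge k+2$. All other $\mu$ violate at least one of (a)--(c) for every $j$ with $\mu_j \ge 1$, so contribute $0$. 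The main obstacle is entirely one of bookkeeping: pinning down $H_j$ inside the wreath-product centralizer and checking that conditions (a)--(c) really do pick out exactly the five rows listed in the table.
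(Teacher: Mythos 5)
Your proposal is correct, and it reorganizes the second half of the argument in a genuinely cleaner way than the paper does. Both proofs begin identically, using Frobenius reciprocity to reduce $\langle X_1,\Ind_{Z(c_\mu)}^{S_n}\xi_\mu\rangle$ to a computation of $\Hom_{Z(c_\mu)}(\xi_\mu,\C^n)$. From there the paper works with explicit vectors: it restricts $\C^n$ to each cyclic factor, exhibits the spanning vector $v_k=\eta_k e_1+\cdots+\eta_k^k e_k$ of the relevant isotypic piece, and eliminates the remaining conjugacy classes by showing that no vector can satisfy the required eigenvalue conditions for two different factors simultaneously (plus a separate hand check for three or more $2$-cycles). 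You instead decompose $[n]$ into its $Z(c_\mu)$-orbits, apply Frobenius reciprocity a second time, and reduce everything to the purely group-theoretic question of whether $\xi_\mu$ is trivial on the point stabilizer $H_j$. Your identification of $H_j$ inside the wreath-product centralizer is right, and conditions (a)--(c) do carve out exactly the five rows of the table: in particular (c) permits $\mu_2=2$ only for $j=2$, which is why $(1\ 2)(3\ 4)$ contributes once while $j=1$ fails for it, and why three or more $2$-cycles fail for every $j$; likewise the $n=2$ versus $n\geq 3$ dichotomy for a single transposition falls out of whether the $j=1$ orbit is nonempty. What your route buys is uniformity: no eigenvectors need to be written down, each admissible pair $(\mu,j)$ contributes exactly $1$ by orthogonality of the one-dimensional character $\xi_\mu|_{H_j}$ against the trivial character, and the same orbit--stabilizer template extends immediately to characters such as $\binom{X_1}{a}$ by replacing $[n]$ with the set of $a$-element subsets. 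The only thing lost is the explicit basis of the isotypic component, which the paper exploits to transfer the computation from $(1\cdots k)$ to $(1\cdots k)(k{+}1\ k{+}2)$ for free, but your criterion handles that case just as quickly.
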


\begin{proof}
Since $X_1$ is the character of the permutation $S_n$-representation $V=\C^n$, the inner product $\langle X_1, \Ind_{Z(c_\mu)}^{S_n}(\xi_\mu)\rangle$ computes the dimension of $\Hom_{S_n}(\Ind_{Z(c_\mu)}^{S_n}(\xi_\mu),\,\C^n)$.  The defining property of $\Ind$ implies:  
\[\Hom_{S_n}(\Ind_{Z(c_\mu)}^{S_n}(\xi_\mu),\,\C^n)\approx \Hom_{Z(c_\mu)}(\xi_\mu,\C^n).\]  
In other words, we seek to compute the dimension of the $\xi_\mu$-isotypic component 
\[V^{\xi_\mu}\coloneq \{v\in \C^n\ |\ \sigma\cdot v=\xi_\mu(\sigma) v\quad \forall \sigma\in Z(c_\mu)\}.\]
Let $e_1,\ldots,e_n$ be the standard basis for $\C^n$. Consider a factor $\Z/k\Z<Z(c_\mu)$ generated by a $k$-cycle. The representation $V=\C^n$ restricts to this subgroup as $\C^k\oplus \C^{\oplus n-k}$, where $\C^k$ denotes the regular representation of $\Z/k\Z$. 
In particular, in the case $k\geq 3$ when the character $\xi_\mu$ is nontrivial on $\Z/k\Z$, the $\xi_\mu|_{\Z/k\Z}$-isotypic component is one-dimensional. Explicitly, if $\Z/k\Z$ is generated by the $k$-cycle $(1 \cdots k)$ and $\xi_\mu$ sends this generator to the root of unity $\eta_k\neq 1$, then $V^{\xi_\mu|_{\Z/k\Z}}$ is spanned by $v_k=\eta_k e_1 + \eta_k^2 e_2 + \cdots + \eta_k^k e_k$.

Any other cycle in $c_\mu$ will fix the vector $v_k$. If $c_\mu$ contains another $l$-cycle $\Z/l\Z$ for $l\geq 3$ (whether $k=l$ or not), the factor $\Z/l\Z$ cannot act on $v_k$ by $\eta_l\neq 1$, and so $V^{\xi_\mu}=0$. Similarly, if $c_\mu$ contains more than one 2-cycle, the factor $\Z/2\Z\wr S_{\mu_2}$ cannot act on $v_k$ by the sign representation of $S_{\mu_2}$, and so again $V^{\xi_\mu}=0$.
This rules out all conjugacy classes containing a $k$-cycle with $k\geq 3$ except those of the form $(1\ \cdots\ k)$ and $(1\ \cdots\ k)(k+1\ k+2)$.

Furthermore, if $c_\mu$ contains more than three 2-cycles then $Z(c_\mu)$ has a subgroup of the form $\Z/2\Z\wr S_3$. The character $\xi_\mu$ restricts to the subgroup $S_3$ as the sign representation. The representation $V=\C^n$ restricts to this subgroup as $\C^3\oplus \C^3\oplus \C^{n-6}$, where $\C^3$ denotes the permutation representation of $S_3$. Since the sign representation of $S_3$ does not appear in $\C^3$ or in the trivial representation $\C$, we conclude that $V^{\xi_\mu}=0$ in this case. This rules out all conjugacy classes containing only 2-cycles except $(1\ 2)$ and $(1\ 2)(3\ 4)$.

It is now easy to verify the claimed multiplicities in the remaining cases. For $c_\mu=(1\ \cdots\ k)$, we already saw that $V^{\xi_\mu}$ is 1-dimensional, spanned by $v_k=\eta_k e_1 + \cdots + \eta_k^k e_k$. This vector is also fixed by $S_2\times S_{n-k-2}<S_{n-k}$, and so the same vector $v_k$ spans $V^{\xi_\mu}$
for $c_\mu=(1\ \cdots\ k)(k+1\ k+2)$. For $c_\mu=(1\ 2)(3\ 4)$, one can check by hand that $V^{\xi_\mu}$ is 1-dimensional and spanned by $e_1 + e_2 - e_3 - e_4$. Finally, for $c_\mu=(1\ 2)$ we find that a basis for $V^{\xi_\mu}=V^{S_2\times S_{n-2}}$ is given by $e_1+e_2$ and $e_3+\cdots+e_n$. The latter only occurs when $n\geq 3$, so for $n=2$ we have $\dim V^{S_2\times S_{n-2}}=\dim V^{S_2}=1$, while for $n\geq 3$ we have $\dim V^{S_2\times S_{n-2}}=2$ as claimed.
\end{proof}

\begin{proof}[Proof of Proposition~\ref{pr:exactlinear}]
We  apply Proposition~\ref{pr:exactcount} to the character polynomial $X_1$. For $i=0$ we have $\langle X_1,H^0(P_n;\Q)\rangle=\langle X_1,1\rangle_n=1$ for all $n\geq 1$; for $1\leq i\leq n-2$ we have $\langle X_1,H^i(P_n;\Q)\rangle=2$ by Proposition~\ref{pr:standard}; and for $i=n-1$ we have $\langle X_1,H^i(P_n;\Q)\rangle=1$ by Proposition~\ref{pr:standard}. Finally $H^i(P_n;\Q)=0$ 
for $i\geq n$.  Proposition~\ref{pr:exactcount}  thus gives
\begin{equation} 
\label{eq:X1total}
\sum_{f \in \Conf_n(\F_q)} X_1(f) = q^n - 2q^{n-1}+2q^{n-2}-2q^{n-3}+\cdots\mp 2q^3\pm 2q^2\mp q,
\end{equation}
where $\pm=(-1)^n$.
This formula is equivalent to Proposition~\ref{pr:exactlinear}, as can be seen in multiple ways. For example,
multiplying \eqref{eq:X1total} by $1+q^{-1}$ gives $q^n-q^{n-1}\pm q\mp 1$. Factoring this as $(q^n-q^{n-1})(1\pm q^{-(n-1)})$, this shows that \eqref{eq:X1total} is equal to \[(q^n-q^{n-1})\frac{1\pm q^{-(n-1)}}{1+q^{-1}}.\]
Dividing \eqref{eq:X1total} by $\left|\Conf_n(\F_q)\right|=q^n-q^{n-1}$ gives: 
\[\frac{\sum X_1(f)}{\left|\Conf_n(\F_q)\right|}=\frac{1\pm q^{-(n-1)}}{1+q^{-1}}=1-q^{-1}+q^{-2}-\cdots\pm q^{-(n-2)}\]
as claimed  in Proposition~\ref{pr:exactlinear}. 
\qedhere
\end{proof}

\para{Proposition~\ref{pr:exactlinear} and Occam's razor} 
Before moving on, we point out that Proposition~\ref{pr:exactlinear} allows us to give another perspective on the recent results of Kupers--Miller \cite{KM}. In that paper they consider the space $\Conf'_n(\C^d)$ parametrizing configurations of $n$ distinct points in $\C^d$, where one point is labeled and the other points are indistinguishable. Their results concern the stable cohomology $H^i(\Conf'(\C^d);\Q)\coloneq \lim_{n\to\infty} H^i(\Conf'_n(\C^d);\Q)$ for $i>0$. Verifying Vakil--Wood's Conjecture H from \cite{VW}, they prove that the stable dimension $\dim H^i(\Conf'(\C^d);\Q)$ is  periodic in $i$. Moreover, Kupers--Miller prove that this dimension is 2 when $i=(2d-1)k$, and 0 otherwise.

We can give another proof of Kupers--Miller's result in the case $d=1$ using Proposition~\ref{pr:exactlinear}, and in fact compute the unstable cohomology of $\Conf'_n(\C)$. In this case their result says just that $\dim H^i(\Conf'(\C);\Q)=2$ for all $i>0$. We can identify the space $\Conf'_n(\C)$ with the space of degree-$(n+1)$ polynomials of the form $f(T)=(T-x)^2\cdot g(T)$, where $g(T)$ is a squarefree polynomial coprime to $T-x$. (The double root $x$ is the labeled point, while the $n-1$ roots of $g(T)$ are the indistinguishable points.) This shows that $\Conf'_n(\C)$ is the quotient of $\PConf_n(\C)$ by the subgroup $S_{n-1}<S_n$. Therefore by transfer, we have an isomorphism
\[H^i(\Conf'_n(\C);\Q)\approx H^i(\PConf_n(\C);\Q)^{S_{n-1}}.\]
As we used in the proof of Lemma~\ref{lem:X1multiplicities}, $\dim V^{S_{n-1}}=\langle X_1,V\rangle$. Therefore Proposition~\ref{pr:standard} shows that 
\[\dim H^i(\Conf'_n(\C);\Q)=\langle X_1,H^i(\PConf_n(\C);\Q)\rangle=2\quad\text{ for all }n\geq i+2.\] We remark that the results of \cite{KM} violate a tentative prediction of the stable Betti numbers made in \cite[Eq. 1.50]{VW}; for example, for $d=1$ this ``motivic Occam's Razor'' predicted that $\dim H^i(\Conf'(\C);\Q)=2$ when $i\equiv 0,1\bmod{4}$, rather than for all $i>0$.
%
%(This their motivic ``Occam's Razor'' predicts that this stable dimension is given by $\dim H^i(\Conf'(\C);\Q)=2$ for $i\equiv 0,1\bmod{4}$ and $\dim H^i(\Conf'(\C);\Q)=0$ for $i\equiv 2,3\bmod{4}$ \cite[Eq. 1.50]{VW}.

%
%
%contradicts a prediction made by Vakil--Wood in \cite{VW}. Their predictions concern the stable cohomology $H^i(\Conf'(\C);\Q)\coloneq \lim_{n\to\infty} H^i(\Conf'_n(\C);\Q)$ for $i>0$. Specifically,  Vakil--Wood's Conjecture H states that $\dim H^i(\Conf'(\C);\Q)$ should be periodic in $i$. Moreover, 
%
%Although Conjecture H does hold in this case, the specific prediction of \cite[Eq. 1.50]{VW} is not correct. Indeed, This is the quotient of $\PConf_n(\C)$ by the subgroup $S_{n-1}<S_n$. Therefore by transfer, we have an isomorphism
%\[H^i(\Conf'_n(\C);\Q)\approx H^i(\PConf_n(\C);\Q)^{S_{n-1}}.\] But as we used in the proof of Lemma~\ref{lem:X1multiplicities}, $\dim V^{S_{n-1}}=\langle X_1,V\rangle$. Proposition~\ref{pr:standard} shows that the  multiplicity $\langle X_1,H^i(\PConf_n(\C);\Q)\rangle=2$ for all $n\geq i+2$. Therefore we see that the stable cohomology is 2-dimensional for \emph{all} $i>0$, rather than only for $i\equiv 0,1\bmod{4}$ as predicted in \cite[Eq. 1.50]{VW}:
%\[\dim H^i(\Conf'(\C);\Q)= \langle X_1,H^i(\PConf(\C);\Q)\rangle=2\quad\text{ for all }i>0\]

%
We point out that this application only required the computation of the \emph{stable} multiplicities $\langle X_1,H^i(\PConf(\C);\Q)\rangle$, not the exact computations of Proposition~\ref{pr:standard}. Therefore the appeal to Proposition~\ref{pr:standard} could be replaced by the  $L$-function argument given in Section~\ref{sec:Lfunctions} below, which also yields the stable multiplicities.

\subsection{Using $L$-functions to compute representation-stable cohomology}
\label{sec:Lfunctions}
The average number of linear factors of a squarefree polynomial, in the limit as the degree goes to $\infty$, can also be computed by a direct counting argument in the style of analytic number theory.  
In this section we sketch this computation.

\para{The zeta function of $\F_q[T]$}
The chief actor in the story is the zeta function $\zeta(s)=\zeta_{\F_q[T]}(s)$ of the ring $\F_q[T]$. This is an analytic function of a complex variable $s$, which for $\Re s > 1$ is defined as the Euler product
\beq
\zeta(s) = \prod_{P} \frac{1}{1- q^{-s \deg P}}
\eeq
as $P=P(T)$ ranges over monic irreducible polynomials in $\F_q[T]$. Since every monic polynomial factors uniquely as a product of monic irreducible polynomials, we can expand
\[(1-q^{-s\deg P})^{-1}=1+q^{-s\deg P}+q^{-2s\deg P}+\cdots\]
Multiplying out gives 
the equivalent formula
\beq
\zeta(s) = \sum_f q^{-s \deg f}
\eeq
where here $f=f(T)$ ranges over all monic polynomials in $\F_q[T]$.

The definition of the zeta function $\zeta_{\F_q[T]}(s)$ parallels the classical definition of the Riemann zeta function $\zeta_\Z(s)$ from analytic number theory, with the monic irreducible polynomials in $\F_q[T]$ naturally standing in for the prime numbers in $\Z$. In the classical case, $\zeta_\Z(s)$ is defined by a Euler product or sum which converges for $\Re s>1$; it then extends to a meromorphic function on the whole complex plane by analytic continuation.  The same thing is true for $\zeta_{\F_q[T]}(s)$, except that in this case we can describe the resulting meromorphic function directly: it is $\zeta_{\F_q[T]}(s)=\frac{1}{1-q^{1-s}}$. Indeed this is true almost by definition; there are $q^n$ monic polynomials of degree $n$, so 
\beq
\zeta(s) =  \sum_f q^{-s \deg f} =\sum_{n=0}^\infty q^n\cdot q^{-sn} = \sum_{n=0}^\infty q^{n(1-s)} = \frac{1}{1-q^{1-s}}.
\eeq

\para{The $L$-function of $\Conf_n(\F_q)$} We define the $L$-function $L(s)$ as a weighted version of the zeta function, where we only count those monic polynomials $f(T)$ that are squarefree.
\beq
L(s)\coloneq \sum_{f\text{ squarefree}}q^{-s\deg f}= \sum_n \sum_{f \in \Conf_n(\F_q)} q^{-ns}
\eeq
Every squarefree monic polynomial factors uniquely as a product of irreducible polynomials, but now with the condition that no factor appears more than once. Therefore $L(s)$ can be broken up as an Euler product 
\begin{equation}
L(s)=\prod_P (1 + q^{-s \deg P})
\label{eq:eulerproduct}
\end{equation}
over monic irreducible polynomials $P=P(T)$ in $\F_q[T]$.  Since $1+q^k=\frac{1-q^{2k}}{1-q^k}$ we can rewrite this as:
\beq
L(s)=\prod_P \frac{1- q^{-2s \deg P}}{1-q^{-s \deg P}}  = \frac{\zeta(s)}{\zeta(2s)} = \frac{1-q^{1-2s}}{1-q^{1-s}}
\eeq

%where $\zeta(s) = \prod_P (1- q^{-s \deg P})$ is the zeta function of the affine line over $\F_q$.  But it is easy to see that
%\beq
%\zeta(s) = \sum_f q^{-s \deg f}
%\eeq
%over {\em all} monic polynomials $f$; since there are $q^n$ such polynomials of degree $n$, this yields
%\beq
%\zeta(s) = (1-q^{1-s})^{-1}
%\eeq
%and so
%\beq
%L(s) 
%\eeq
Finally, we define the {\em weighted $L$-function} $L(X_1,s)$ by:
\beq
L(X_1,s)= \sum_{f\text{ squarefree}}X_1(f)q^{-s\deg f} = \sum_n \sum_{f \in \Conf_n(\F_q)}X_1(f) q^{-ns}
\eeq
This is a ``weighted" version of $L(s)$, with each squarefree polynomial weighted by its number of linear factors. By standard analytic number theory techniques, the average value of $X_1(f)$ on squarefree polynomials $f$ is given by the ratio of the residue of $L(X_1,s)$ at $s=1$ to that of $L(s)$. Since this ``average'' is over polynomials of all degrees at once, in the notation of the previous section this will correspond not to the average for any finite $n$, but  to the limiting statistic \[\lim_{n\to \infty}\frac{\sum_{f\in \Conf_n(\F_q)} X_1(f)}{\left|\Conf_n(\F_q)\right|}.\]

The statistic $X_1$ breaks up as a sum $X_1=\sum_{x\in \F_q} X_1^{(x)}$, where $X_1^{(x)}(f)$ takes the value 1 or 0 depending on whether $T-x$ divides $f(T)$ or not. This lets us write $L(X_1,s)=\sum_{x\in \F_q} L(X_1^{(x)},s)$, where \beq
L(X_1^{(x)},s) \coloneq \sum_{f\text{ squarefree}}X_1^{(x)}(f)q^{-s\deg f}.
\eeq
The presence of the 0-1 variable $X_1^{(x)}(f)$ has the effect that this sum includes only those terms for which $T-x$ divides $f(T)$:
\[L(X_1^{(x)},s) =\sum_{\substack{f\text{ squarefree}\\(T-x)|f(T)}}q^{-s\deg f}\]
But this is very close to the definition of $L(s)$, differing only in one local factor of the Euler product~\eqref{eq:eulerproduct}. Specifically, the Euler factor of \eqref{eq:eulerproduct} at $P=T-x$ is $1+q^{-s}$, where the first term corresponds to polynomials $f(T)$ with $P\not| f$ and the second term to those polynomials with $P|f$. Therefore the difference between $L(s)$ and $L(X_1^{(x)},s)$ is just to replace $1+q^{-s}$ by $q^{-s}$ in the Euler product; in other words
\beq
L(X_1^{(x)},s) = q^{-s} \prod_{P \neq T-x} (1 + q^{-s \deg P}) = \frac{q^{-s}}{1+q^{-s}}L(s)
\eeq
Since the $L$-function $L(X_1^{(x)},s)$ does not depend on $x\in \F_q$, this gives\beq
L(X_1,s) = \sum_{x\in \F_q}L(X_1^{(x)},s) =\sum_{x\in \F_q} \frac{q^{-s}}{1+q^{-s}}L(s) =q \frac{q^{-s}}{1+q^{-s}} L(s)
\eeq
Therefore the desired ratio is $L(X_1,s)/L(s) = q^{1-s}/(1+q^{-s})$. In particular, 
  the residue at $s=1$ is just the limit as $s\to 1$, which as claimed in \eqref{eq:X1limit} is  \[\lim_{s\to 1}\  \frac{q^{1-s}}{1+q^{-s}}
=\frac{1}{1+q^{-1}}=1-\frac{1}{q}+\frac{1}{q^2}-\cdots.\]

%\para{$L$-functions for other statistics} 
\subsection{$L$-functions for other statistics} 
We saw in the previous subsection  that a computation of the cohomology of the pure braid group, as in Proposition~\ref{pr:standard}, yields as a corollary information about the cardinality of $\PConf_n(\F_q)$ for every $q$, as in Proposition~\ref{pr:exactlinear}.  On the other hand, for a fixed character polynomial $P$, the coefficients $\langle P, H^i(\PConf(\C))\rangle$ in Proposition~\ref{pr:purebraidlimit} are determined if we know the value of $\sum (-q)^{-i} \langle P, H^i(\PConf(\C))\rangle$ for every $q$ (or even infinitely many $q$).  It follows that we can go in the other direction, computing the dimensions of cohomology groups by means of counting points over finite fields.  In this context, this observation is due to Lehrer~\cite{lehrer}.

To see how this works, consider the character polynomial $P = \binom{X_1}{2} - X_2$.  This gives the character of the $S_n$-representation $\wedge^2 \Q^n$, where $\Q^n$ is the permutation representation of $S_n$ with character $X_1$. Just as above, we can study the $L$-function 
\beq
L(P,s) = \sum_{f\text{ squarefree}}P(f)q^{-s\deg f}= \sum_n \sum_{f \in \Conf_n(\F_q)}P(f) q^{-ns}.
\eeq
The weighting factor $P(f)$ here is the difference between the number of reducible quadratic factors of $f$ and the number of irreducible quadratic factors. 

We can compute an explicit closed form for $L(P,s)$ along the same lines as we did above for $L(X_1,s)$;  namely, we can break up $L(X_2,s)$ as a sum over irreducible quadratic polynomials $g(T)$ of $L(X_2^{(g)},s)$, the $L$-function counting squarefree polynomials divisible by a fixed irreducible quadratic $g(T)$. By analyzing local Euler factors, we find that
\beq
L(X_2^{(g)},s) = \frac{q^{-2s}}{1+q^{-2s}}L(s).
\eeq
Since there are $\binom{q}{2}=\frac{1}{2}(q^2-q)$ irreducible quadratics in $\F_q[T]$, this gives
\beq
L(X_2,s) = \tfrac{1}{2}(q^2-q)\frac{q^{-2s}}{1+q^{-2s}}L(s).
\eeq
Thus we have
\beq
\lim_{s \ra 1}\ \frac{L(X_2,s)}{L(s)} =\tfrac{1}{2}(q^2-q)\frac{q^{-2}}{1+q^{-2}}=\frac{q-1}{2(q+q^{-1})}=\frac{q^2-q}{2(q^2+1)}
\eeq
The quantity on the right hand side is the average number of irreducible quadratic factors of an squarefree polynomial over $\F_q$.  (As $q\to \infty$ this average approaches $1/2$, agreeing with the number of length-$2$ cycles of a random permutation.)

The same computation can be carried out for $\binom{X_1}{2}$; now we sum over squarefree {\em reducible} quadratic polynomials $(T-x)(T-y)$.  But the $L$-function counting squarefree polynomials divisible by $(T-x)(T-y)$ is obtained from $L(s)$ by changing \emph{two} local factors from $1+q^{-s}$ to $q^{-s}$ and therefore is equal to $\frac{q^{-s}}{1+q^{-s}}\cdot \frac{q^{-s}}{1+q^{-s}} L(s)$.
Since there are again $\frac{1}{2}(q^2-q)$ squarefree polynomials $(T-x)(T-y)$ in $\F_q[T]$, this gives
\[L({\textstyle \binom{X_1}{2}},s)=\tfrac{1}{2}(q^2-q)\frac{q^{-2s}}{(1+q^{-s})^2}L(s).\]
Therefore we find that the residue at $s=1$ is
\beq
\lim_{s \ra 1} \frac{L({\textstyle \binom{X_1}{2}},s)}{L(s)} =\tfrac{1}{2}(q^2-q)\frac{q^{-2}}{(1+q^{-1})^2}=\frac{q^2-q}{2(q+1)^2}.
\eeq
Putting these together, we find that the average value of $P(f) = \binom{X_1(f)}{2}  - X_2(f)$ converges to
\beq
\tfrac{1}{2}(q^2-q)\left(\frac{1}{(q+1)^2}-\frac{1}{q^2+1}\right)
%-(q-1)q^{-2}(1+q^{-1})^{-2}(1+q^{-2})^{-1}
\eeq
as $\deg f\to \infty$. Note that this expression is negative, with leading term $-1/q$;  that is, a squarefree polynomial tends to have slightly more irreducible quadratic factors than reducible ones, and this bias decreases as $q$ grows.

We can transfer this counting statement to a computation of stable cohomology.  From the computations above, we have
\beq
L(P,s) = \tfrac{1}{2}(q^2-q)q^{-2s}\left(\frac{1}{(1+q^{-s})^{2}} - \frac{1}{1+q^{-2s}}\right) L(s)
\eeq
By definition of $L(P,s)$, the sum of $P(f)$ over the squarefree polynomials $f(T)\in\Conf^n(\F_q)$ is  the coefficient of $q^{-ns}$ when the above expression is expanded in $q^{-s}$.  One can expand by hand to check directly that this coefficient is
\beq
q^n(-q^{-1} + 4q^{-2} - 7q^{-3} + 8q^{-4} - 9q^{-5} + 12q^{-6} - 15q^{-7} + 16 q^{-8} + 17q^{-9} - 20q^{-10} + \ldots)
\eeq
In other words, these numbers give the stable value of the multiplicity of $\bwedge^2 \Q^n$ in $H^i(P_n)$:
\beq
\big\langle\, \textstyle{\binom{X_1}{2}} - X_2,\ H^i(\PConf(\C))\,\big\rangle = \begin{cases}
2i&\text{ if }i\equiv 0\bmod{4}\\
2i-1&\text{ if }i\equiv 1\bmod{4}\\
2i&\text{ if }i\equiv 2\bmod{4}\\
2i+1&\text{ if }i\equiv 3\bmod{4}
\end{cases}
\eeq
Dividing the power series  above by $\left|\Conf_n(\F_q)\right|=q^n-q^{n-1}$ gives the expression given in (3) from Table A in the introduction.
 The same computation can be carried out from the Lehrer--Solomon description of the cohomology of the pure braid group, as we did for $P=X_1$ in Proposition~\ref{pr:standard} (and even the unstable multiplicities for finite $n$ can be obtained in this way), but the necessary analysis is more complicated.

%The number of squarefree monic polynomials of degree $n$ is $q^n - q^{n-1}$, and so the sum of $P(f)$ over all such polynomials is approximately
%\beq
%q^n(1-q^{-1})[-(q-1)q^{-2}(1+q^{-1})^{-2}(1+q^{-2})^{-1}]
%\eeq
%which evaluates to
%\beq
%q^n(1-q^{-1} + 4q^{-2} - 7q^{-3} + 8q^{-4} - 9q^{-5} + 12/q^{-6} - 15q^{-7} \ldots 
%\eeq

\subsection{Beyond character polynomials}

Methods similar to the above can be used to study the statistics of arithmetic functions on the space of squarefree polynomials that are not character polynomials.

\para{The M\"{o}bius function}
For example, let $\mu$ be the M\"{o}bius function on squarefree monic polynomials over $\F_q$; that is, $\mu(f)$ is $(-1)^d$ where $d$ is the number of irreducible factors in $f(T)$.  When $q$ is odd, $\mu(f)$ can also be expressed in terms of a Legendre symbol:
\beq
(-1)^{\deg f} \mu(f) =  \left(\frac{\Delta_f}{q}\right)
\eeq
where $\Delta_f$ is the discriminant of $f(T)$, which is necessarily nonzero because $f(T)$ is squarefree.  In other words, the M\"{o}bius function keeps track of whether the discriminant of $f$ is a quadratic residue.

The M\"{o}bius function is directly related to the action of Frobenius on the roots of $f(T)$. This action determines a permutation $\sigma_f$ in $S_n$, where $n=\deg f$, defined up to conjugacy. The sign $\epsilon(\sigma_f)$  of this permutation is $(-1)^a$ where $a$ is the number of {\em even-length} cycles in $\sigma_f$. The cycles in $\sigma_f$ correspond bijectively to the irreducible factors of $f$.  So if $b$ is the number of odd-length cycles in $\sigma_f$, then $(-1)^{a+b} = \mu(f)$ by definition.  On the other hand, since $b$ is congruent to $n$ mod $2$, this implies that $\epsilon(\sigma_f)\cdot (-1)^n=\mu(f)$. In other words
\begin{equation}
\label{eq:Moebius}
(-1)^{\deg f} \mu(f) = \epsilon(\sigma_f).
\end{equation}

From the perspective that the action of Frobenius on the roots is something like a ``random permutation" in $S_n$, one might expect $\mu(f)$ to take the value $1$ about half the time, and indeed this is the case.  As in the cases above, this can be proven either by an $L$-function argument or by a computation in stable cohomology of the pure braid group. We adopt the latter approach, which has the additional benefit of proving that $\mu(f)$ is 1 \emph{exactly} half the time, not just in the limit.  
We will need the following result of Lehrer-Solomon \cite[Proposition 4.7]{lehrersolomon}.
%; we include their proof for completeness.

\begin{lemma}%[{\bf sign rep doesn't occur in \boldmath$H^\ast(P_n;\Q)$}]
\label{lem:signrep}
The sign representation $\epsilon$ does not appear as an irreducible constituent of 
  $H^*(P_n;\Q)$ for any $i$ or any $n\geq 2$.
\end{lemma}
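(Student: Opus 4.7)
The plan is to combine the Lehrer--Solomon decomposition \eqref{eq:hi} with Frobenius reciprocity. Since both $\epsilon$ and each $\xi_\mu$ are one-dimensional characters, Frobenius reciprocity gives
\[\langle \epsilon, \Ind_{Z(c_\mu)}^{S_n}(\xi_\mu)\rangle_{S_n} \;=\; \langle \epsilon|_{Z(c_\mu)}, \xi_\mu\rangle_{Z(c_\mu)},\]
which equals $1$ when $\xi_\mu = \epsilon|_{Z(c_\mu)}$ and $0$ otherwise. Thus it suffices to show that $\xi_\mu$ never coincides with the restriction of $\epsilon$ to $Z(c_\mu)$, for any conjugacy class $c_\mu$ in $S_n$ whenever $n \geq 2$.

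Suppose first that $c_\mu$ has some cycle of length $j \geq 2$, and let $\gamma \in Z(c_\mu)$ be a generator of the corresponding cyclic factor $\Z/j\Z$. Viewed as an element of $S_n$, $\gamma$ is a $j$-cycle, so $\epsilon(\gamma) = (-1)^{j-1} = (-1)^{j+1}$. The description of $\xi_\mu$ recalled above gives $\xi_\mu(\gamma) = \eta_j = (-1)^{j+1}e^{2\pi i/j}$. These two values differ by the factor $e^{2\pi i/j}$, which is $\neq 1$ for every $j \geq 2$. So $\xi_\mu(\gamma) \neq \epsilon(\gamma)$, and the two one-dimensional characters disagree.

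The only remaining possibility is $c_\mu = \id$, corresponding to $\mu = (1^n)$, which contributes only to $H^0(P_n;\C)$. In that case $Z(c_\mu) = S_n$ and $\xi_\mu$ is the trivial character (since $j=1$ is odd, $\xi_\mu$ restricts trivially to $S_{\mu_1} = S_n$); but the sign character is nontrivial on $S_n$ for $n \geq 2$, so again $\xi_\mu \neq \epsilon|_{S_n}$. I do not expect a substantial obstacle here, since the whole argument reduces to a one-line character computation on a $j$-cycle, with the identity conjugacy class treated separately by comparing the trivial and sign representations of $S_n$.
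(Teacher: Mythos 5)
Your proof is correct. The term-by-term check works: for a generator $\gamma$ of a $\Z/j\Z$ factor of $Z(c_\mu)$ (which is one of the $j$-cycles of $c_\mu$) one has $\epsilon(\gamma)=(-1)^{j+1}$ while $\xi_\mu(\gamma)=(-1)^{j+1}e^{2\pi i/j}$, and these disagree for every $j\geq 2$ (for $j=2$ this is the comparison $1\neq -1$, using that $\eta_2=1$ while a transposition is odd); since both characters are one-dimensional, a single point of disagreement forces $\langle\epsilon|_{Z(c_\mu)},\xi_\mu\rangle=0$, and the identity class is correctly handled by noting $\epsilon$ is nontrivial on $S_n$ for $n\geq 2$. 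Your route is mildly different from the one the paper records: instead of running Frobenius reciprocity through each summand $\Ind_{Z(c_\mu)}^{S_n}(\xi_\mu)$ of the graded Lehrer--Solomon decomposition, the paper cites \cite[Proposition 4.7]{lehrersolomon} and then observes that the \emph{total} ungraded cohomology satisfies $H^*(P_n;\Q)\approx \Ind_{S_2}^{S_n}\Q\oplus\Ind_{S_2}^{S_n}\Q$, so that $\langle\epsilon,H^*(P_n;\Q)\rangle_{S_n}=2\,\langle\epsilon|_{S_2},1\rangle_{S_2}=0$ in one line. That global argument is shorter but requires knowing the ungraded $S_n$-module structure of $H^*(P_n;\Q)$ as an input; yours is self-contained given the description of the characters $\xi_\mu$ already set up before Lemma~\ref{lem:X1multiplicities}, and it is essentially the verification underlying the cited result. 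Either approach is acceptable.
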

In fact, the total cohomology $H^*(P_n;\Q)$ is known to be isomorphic to two copies of $\Ind_{S_2}^{S_n}\Q$, so by Frobenius reciprocity we have $\langle \epsilon,H^*(P_n;\Q)\rangle_{S_n}=2\cdot \langle \epsilon,1\rangle_{S_2}=0$.
%\begin{proof}
%  By Lehrer-Solomon \cite{lehrersolomon}, each $H^i(P_n;\Q)$ is the sum of representations
%  induced up from certain linear characters $\xi_\mu$ on
%  centralizers $Z(c_\mu)$ over certain conjugacy classes $c_\mu$ in $S_n$. By Frobenius
%  reciprocity, it suffices to pair the restriction of $\epsilon$ to
%  $Z(c_\mu)$ with $\xi_\mu$; since these are both one-dimensional,
%  it suffices to check that $\xi_\mu$ is never equal to the sign
%  representation $\epsilon$. But this is clear from its definition; on
%  the $k$--cycles making up $c_\mu$, $\xi_\mu$ is not real except
%  on $2$--cycles, where it is trivial. This excludes all  $c_\mu$ except $c_\mu=\id$, when $Z(c_\mu)=Z(\id)=S_n$; but in this case $\xi_\mu$ is trivial, not the sign
%  representation.
%\end{proof}

Given Lemma~\ref{lem:signrep}, it is then immediate from Proposition~\ref{pr:exactcount} that for all $n\geq 2$,
\begin{equation}
\sum_{f \in \Conf_n(\F_q)} \mu(f) = 0.
\label{eq:muaverage}
\end{equation}
When $q$ is odd, this means that the discriminants $\Delta(f)$ of  the degree-$n$ squarefree polynomials $f(T)\in \F_q[T]$ are exactly evenly distributed between quadratic residues and non-residues, verifying (4) from Table A in the introduction.

\para{Irreducible and almost-irreducible polynomials}
How many of the monic squarefree polynomials of degree $n$ are irreducible?  Since irreducible polynomials in $\F_q[T]$ are the analogues of prime numbers in $\Z$, this question is the $\F_q[T]$ version of the {\em prime number theorem}.   More generally:  for a given $k$, how many of the monic squarefree polynomials of degree $n$ have no irreducible factor of degree less than $n/k$?  When $k=1$, this reduces to counting irreducible polynomials.

The answer to this question is known, by work of Panario and Richmond~\cite{panariorichmondbenor,thorne}.   Here we will explain how to prove results of this kind fairly simply, with explicit error terms, using the mechanisms of representation-stable cohomology for the pure braid group described here.

Let $\chi_k\colon S_n\to \{0,1\}\subset \Q$ be the class function for which $\chi_k(\sigma)=0$ if $\sigma$ contains a cycle of length $< n/k$, and $\chi_k(\sigma)=1$ if every cycle of $\sigma$ has length $\geq n/k$.  Our aim is then to estimate
\beq
\sum_{f \in \Conf_n(\F_q)} \chi_k(f)=
\sum_{i=0} (-1)^i q^{n-i}\langle \chi_k, H^i(P_n;\Q) \rangle
\eeq
% (Note that, even though $\chi_k$ is not a character polynomial, there is certainly a character polynomial on $S_n$ with which its values agree!)
and compare it to $\left|\Conf_n(\F_q)\right|=q^n-q^{n-1}$. We first note that the contribution of $H^i$ to the above sum is zero for all small positive $i$.

\begin{lemma} When $0<i<n/2k$ we have
\beq
\langle \chi_k, H^i(P_n;\Q) \rangle = 0.
\eeq
\label{le:vanishingchi}
\end{lemma}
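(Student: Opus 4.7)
The plan is to combine the Lehrer--Solomon decomposition \eqref{eq:hi} with Frobenius reciprocity, reducing the lemma to showing that for every conjugacy class $\mu$ of permutations in $S_n$ with $n-i$ cycles,
\[\big\langle \chi_k|_{Z(c_\mu)},\, \xi_\mu \big\rangle_{Z(c_\mu)} = 0.\]
Throughout, the hypothesis $i<n/2k$ will be used to force $c_\mu$ to have overwhelmingly many fixed points.

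I would start with the bookkeeping step. If $c_\mu$ has $\mu_j$ cycles of length $j$, the identities $\sum_j\mu_j = n-i$ and $\sum_j j\mu_j = n$ imply $\sum_{j\geq 2}\mu_j \leq \sum_{j\geq 2}(j-1)\mu_j = i$, so the set $F\subset[n]$ of fixed points of $c_\mu$ has size $\mu_1 \geq n-2i$. Writing $N = [n]\setminus F$, this gives $|N| = n-\mu_1 \leq 2i < n/k$, while $i>0$ rules out $c_\mu=\id$ and hence forces $|N|\geq 2$.

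Next I would exploit the product structure of the centralizer: $Z(c_\mu) = S_{\mu_1}\times Z_N$, where $Z_N = \prod_{j\geq 2}(\Z/j\Z)\wr S_{\mu_j}$ and every $\tau\in Z(c_\mu)$ preserves $F$ and $N$. Since the cycles of such a $\tau = (\tau_F,\tau_N)$ split cleanly across $F$ and $N$, the class function $\chi_k$ factors as $\chi_k(\tau) = \chi_k^F(\tau_F)\cdot\chi_k^N(\tau_N)$, where $\chi_k^F$ and $\chi_k^N$ are the analogous ``all cycles of length $\geq n/k$'' indicators on $S_{\mu_1}$ and on permutations of $N$. The character $\xi_\mu$ is trivial on the $S_{\mu_1}$ factor by the Lehrer--Solomon recipe recalled above (take $j=1$ odd, with $\eta_1 = 1$), so the whole local inner product splits as
\[\big\langle \chi_k|_{Z(c_\mu)},\, \xi_\mu \big\rangle = \big\langle \chi_k^F,\, 1 \big\rangle_{S_{\mu_1}} \cdot \big\langle \chi_k^N,\, \xi_\mu^N \big\rangle_{Z_N}.\]

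The entire content of the lemma is then the vanishing of the second factor: every $\tau_N\in Z_N$ is a permutation of the nonempty set $N$, and so has at least one cycle, necessarily of length at most $|N|<n/k$; thus $\chi_k^N(\tau_N)=0$ identically. I do not expect a real obstacle here --- the whole argument reduces to the elementary bound $0<|N|<n/k$ combining the two inequalities $i>0$ and $i<n/2k$.
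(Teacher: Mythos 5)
Your proof is correct, but it takes a genuinely different route from the one in the paper. The paper's argument is ``soft'': it uses the $\FIsharp$-module structure of $H^i(P_\bullet;\Q)$ from \cite{CEF} to say that the character of $H^i(P_n;\Q)$ is a single character polynomial $Q_i$ of degree $\leq 2i$ with no constant term (the latter because $Q_i$ must also compute $\dim H^i(P_0;\Q)=0$), and then observes that any such polynomial of degree $<n/k$ is killed by pairing with $\chi_k$, since $X_1(\sigma)=\cdots=X_{\lceil n/k\rceil-1}(\sigma)=0$ on the support of $\chi_k$. Your argument instead goes through the Lehrer--Solomon decomposition \eqref{eq:hi} and Frobenius reciprocity, and reduces everything to the observation that $\chi_k$ vanishes identically on $Z(c_\mu)$: every $\tau\in Z(c_\mu)$ preserves the non-fixed support $N$ of $c_\mu$, which satisfies $2\leq |N|\leq 2i<n/k$, so $\tau$ has a cycle of length $<n/k$. (Note this makes your factorization of the inner product, and the triviality of $\xi_\mu$ on $S_{\mu_1}$, superfluous --- once $\chi_k|_{Z(c_\mu)}\equiv 0$ the inner product with any character is zero.) Both proofs ultimately rest on the same combinatorial fact, that a permutation with $n-i$ cycles has at least $n-2i$ fixed points; in the paper this is hidden inside the weight bound $\deg Q_i\leq 2i$, while in yours it is explicit. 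Your route is more elementary and self-contained given that the paper already invokes Lehrer--Solomon elsewhere (e.g.\ Lemma~\ref{le:boundbigi}), whereas the paper's route has the advantage of applying verbatim to any finitely generated $\FIsharp$-module of weight $\leq 2i$ without needing an explicit decomposition of the representation.
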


\begin{proof}  It is shown in \cite[Theorem~4.7]{CEF} that the FI-module $H^i(P_\bullet;\Q)$ is in fact an $\FIsharp$-module. This implies by \cite[Theorem~2.67]{CEF} that the character of $H^i(P_n;\Q)$ is given by a single character polynomial $Q_i$ for \emph{all} $n\geq 0$ (not just for large enough $n$).  For instance, $Q_0 = 1$, $Q_1=\binom{X_1}{2}+X_2$, and we computed in \cite[Eq. 2]{CEF} that \beq
Q_2 = 2 \binom{X_1}{3} + 3 \binom{X_1}{4} + \binom{X_1}{2} X_2 - \binom{X_2}{2} -  X_3 - X_4.
\eeq
The degree of the character polynomial $Q_i$ coincides with the {\em weight} of the FI-module $H^i(P_n;\Q)$ as defined in \cite[Definition~2.50]{CEF}. The FI-module $H^1(P_n;\Q)$ is finitely generated by $H^1(P_2;\Q)\approx \Q$, so by \cite[Proposition~2.51]{CEF} the weight of $H^1(P_n;\Q)$ is at most 2. Since $H^i(P_n;\Q)$ is a quotient of the $i$th exterior power of $H^1(P_n;\Q)$, its weight is at most $2i$ by \cite[Proposition~2.62]{CEF}, so $\deg Q_i\leq 2i$.

The fact that $Q_i$ gives the character of $H^i(P_n;\Q)$ for \emph{all} $n\geq 0$ implies that for $i>0$ the polynomial $Q_i$ has no constant term (meaning that $Q_i(0,0,\ldots)=0$). This is easiest to see in two steps: first, note that the dimension of $H^i(P_n;\Q)$ is given by
\beq
\dim H^i(P_n;\Q) = Q_i(\id) = Q_i(n,0,0,\ldots).
\eeq

But for $n=0$ the group $P_n$ is trivial, so $\dim H^i(P_n;\Q)=0$ for $i>0$, verifying the claim.

Now let $\sigma\in S_n$ be a permutation with $\chi_k(\sigma)\neq 0$, so that no cycle of $\sigma$  has length shorter than $n/k$.  We have that $X_1(\sigma) = X_2(\sigma) = \ldots = X_{(n/k)-1}(\sigma) = 0$.  It follows that $Q(\sigma) = 0$ for any character polynomial of degree less than $n/k$ with no constant term. When computing the inner product $\langle \chi_k,Q\rangle$ for such $Q$, in every term $\chi_k(\sigma)Q(\sigma)$ one or both of the factors is 0, so we have $\langle \chi_k,Q\rangle=0$. When $0<i<n/2k$ we saw above that $Q_i$ has degree $\leq 2i<n/k$ and has no constant term, so $\langle \chi_k,Q_i\rangle=0$ as claimed.
\end{proof}

We now need to bound the contribution of the larger values of $i$.  Our main tool is the following lemma.

\begin{lemma}
Let $\chi$ be a class function on $S_n$ such that $|\chi(\sigma)| \leq 1$ for all $\sigma \in S_n$.  Then
\beq
\big|\langle \chi, H^i(P_n;\Q) \rangle\big| \leq p(2i)
\eeq
where $p(m)$ is the partition function.  Also,
\beq
\sum_i \big|\langle \chi,  H^i(P_n;\Q) \rangle\big| \leq p(n).
\eeq
\label{le:boundbigi}
\end{lemma}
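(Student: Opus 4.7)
The plan is to use the Lehrer--Solomon decomposition
\[
H^i(P_n;\C)=\bigoplus_\mu \Ind_{Z(c_\mu)}^{S_n}(\xi_\mu)
\]
already invoked in Proposition~\ref{pr:PConfconvergent}, where $\mu$ ranges over conjugacy classes in $S_n$ whose representatives $c_\mu$ decompose into exactly $n-i$ cycles, and each $\xi_\mu$ is a one-dimensional character of $Z(c_\mu)$. Since the inner product is additive over direct summands, it suffices to bound $|\langle \chi, \Ind_{Z(c_\mu)}^{S_n}(\xi_\mu)\rangle|$ individually and then count the number of summands.

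By Frobenius reciprocity,
\[
\big\langle \chi,\ \Ind_{Z(c_\mu)}^{S_n}(\xi_\mu)\big\rangle_{S_n}
=\big\langle \chi|_{Z(c_\mu)},\ \xi_\mu\big\rangle_{Z(c_\mu)}
=\frac{1}{|Z(c_\mu)|}\sum_{g\in Z(c_\mu)} \chi(g)\,\overline{\xi_\mu(g)}.
\]
Since $\xi_\mu$ is a one-dimensional character, $|\xi_\mu(g)|=1$ for every $g\in Z(c_\mu)$, and the hypothesis $|\chi(\sigma)|\leq 1$ gives $|\chi(g)\overline{\xi_\mu(g)}|\leq 1$. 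Averaging over $Z(c_\mu)$ then yields $|\langle \chi, \Ind_{Z(c_\mu)}^{S_n}(\xi_\mu)\rangle|\leq 1$. So in both bounds each summand contributes at most $1$ in absolute value, and the problem reduces to counting the relevant conjugacy classes.

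For the first bound, the conjugacy classes contributing to $H^i(P_n;\C)$ are those whose cycle type $(\lambda_1,\ldots,\lambda_{n-i})$ is a partition of $n$ into exactly $n-i$ positive parts. Subtracting $1$ from each part sets up a bijection with partitions of $i$ having at most $n-i$ nonzero parts, so the number of such classes is at most $p(i)\leq p(2i)$. This gives $|\langle \chi, H^i(P_n;\Q)\rangle|\leq p(2i)$. For the second bound, summing over all $i$ from $0$ to $n$ counts \emph{all} conjugacy classes of $S_n$, i.e.\ all partitions of $n$, of which there are exactly $p(n)$; hence $\sum_i |\langle \chi, H^i(P_n;\Q)\rangle| \leq p(n)$.

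There is no serious obstacle here\,---\,the argument is essentially bookkeeping on top of Lehrer--Solomon\,---\,but the one step that wants care is the Frobenius reciprocity estimate, where it matters that $\xi_\mu$ is one-dimensional (so that the cancellation inherent in characters of higher dimension cannot be exploited, but also so that $|\xi_\mu|\equiv 1$ makes the $\ell^1$ bound automatic).
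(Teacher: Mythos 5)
Your proof is correct and follows essentially the same route as the paper's: the Lehrer--Solomon decomposition, Frobenius reciprocity with $|\xi_\mu|\equiv 1$ to bound each summand by $1$ in absolute value, and then a count of conjugacy classes with $n-i$ cycles. Your count via the ``subtract one from each cycle length'' bijection even gives the slightly sharper bound $p(i)$ (the paper instead bounds the sum of nontrivial cycle lengths by $2i$ to get $p(2i)$ directly), but this is only a bookkeeping difference.
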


\begin{proof}
We will need the explicit Lehrer-Solomon description of the $S_n$-action on $H^i(P_n;\Q)$.  Recall from \eqref{eq:hi} that $H^i(P_n;\C)$ decomposes as a sum over conjugacy classes $c_\mu$ with $n-i$ cycles of $\Ind_{Z(c_\mu)}^{S_n}(\xi_\mu)$, where $\xi_\mu$ is a $1$-dimensional representation.

%The character of  $\Ind_{Z_{S_n}(c_\mu)}^{S_n}(\xi_\mu)$ evaluated at a permutation $\sigma$ is bounded above in absolute value by the number of elements of $c_\mu$ fixed by conjugation by $\sigma$, i.e. the size of the intersection of $c_\mu$ with the centralizer $Z_{S_n}(\sigma)$ of $\sigma$.

%Now let $\Sigma$ be a subset of $S_n$ and let $\chi_\Sigma$ be its characteristic function.
 
By Frobenius reciprocity, the inner product $
 \langle \chi, \Ind_{Z(c_\mu)}^{S_n}(\xi_\mu) \rangle_{S_n}
$
is equal to
\beq
\langle \chi |_{Z(c_\mu)}, \xi_\mu \rangle_{Z(c_\mu)} = \frac{1}{|Z(c_\mu)|} \sum_{\sigma \in Z(c_\mu)} \chi(\sigma) \xi_\mu(\sigma),
\eeq
which is evidently bounded in absolute value by $1$.  Thus the value of $\langle \chi, H^i(P_n;\Q) \rangle$ is bounded above by the number of conjugacy classes $c_\mu$ with $n-i$ cycles.  Any such conjugacy class has at least $n-2i$ fixed points, which is to say the sum of its nontrivial cycle lengths is at most $2i$. Therefore the number of such $c_\mu$ is bounded by  $p(2i)$, the number of partitions of $2i$.  Of course, the total number of conjugacy classes $c_\mu$ is $p(n)$.
\end{proof}

Combining Lemmas~\ref{le:vanishingchi} and \ref{le:boundbigi} gives the following.

\begin{prop}[{\bf No small factors vs.\ no small cycles}]
Let $k$ be an integer.  Let $\Phi(n,k)$ be the number of monic, squarefree, degree $n$ polynomials over $\F_q$ with no prime factor of degree less than $n/k$.  Let $\pi(n,k)$ be the proportion of permutations in $S_n$ with no cycle length shorter than $n/k$.  Then
\beq
\Phi(n,k)  = \pi(n,k) q^n +  O(q^{n-\lceil n/2k \rceil } p(n))
\eeq
where $p(n)$ is the partition function and the implied constant is absolute.  In particular, holding $k$ fixed,
\beq
\lim_{n \ra \infty} (n q^{-n} \Phi(n,k)) = \lim_{n \ra \infty} n \cdot\pi(n,k). 
\eeq
\label{pr:smallirred}
\end{prop}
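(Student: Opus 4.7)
The plan is to apply the twisted Grothendieck--Lefschetz formula of Proposition~\ref{pr:exactcount} to the class function $\chi_k$ introduced just before Lemma~\ref{le:vanishingchi}. By definition $\sum_{f\in \Conf_n(\F_q)}\chi_k(f)=\Phi(n,k)$, so Proposition~\ref{pr:exactcount} yields
\[
\Phi(n,k)=\sum_{i=0}^{n}(-1)^i q^{n-i}\langle \chi_k,\,H^i(P_n;\Q)\rangle.
\]
The proof then splits into three ranges of $i$: the leading term $i=0$, the vanishing middle range $0<i<n/(2k)$, and the tail $i\geq \lceil n/(2k)\rceil$ which must be bounded.

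First I would identify the main term. Since $H^0(P_n;\Q)$ is the trivial $S_n$-representation, the inner product $\langle \chi_k,H^0(P_n;\Q)\rangle$ equals $\langle \chi_k,1\rangle_{S_n}=\E_{\sigma\in S_n}\chi_k(\sigma)$, which is exactly the proportion $\pi(n,k)$ of permutations in $S_n$ with no cycle of length less than $n/k$. Thus the $i=0$ contribution is $\pi(n,k)q^n$. Next, Lemma~\ref{le:vanishingchi} immediately kills every term with $0<i<n/(2k)$, so all remaining contributions come from $i\geq \lceil n/(2k)\rceil$.

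To control the tail I would use Lemma~\ref{le:boundbigi}. Since $|\chi_k(\sigma)|\leq 1$ for all $\sigma$, the lemma gives the summed bound $\sum_i|\langle \chi_k,H^i(P_n;\Q)\rangle|\leq p(n)$. Pulling out the uniform factor $q^{n-i}\leq q^{n-\lceil n/(2k)\rceil}$ across the tail range, the total tail contribution is bounded by
\[
q^{n-\lceil n/(2k)\rceil}\sum_{i\geq \lceil n/(2k)\rceil}\big|\langle \chi_k,H^i(P_n;\Q)\rangle\big|\;\leq\; q^{n-\lceil n/(2k)\rceil}\,p(n),
\]
which is exactly the claimed error term, with an absolute implied constant.

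The asymptotic statement then follows formally: dividing by $q^n$ and multiplying by $n$ gives $nq^{-n}\Phi(n,k)=n\pi(n,k)+O\!\left(nq^{-\lceil n/(2k)\rceil}p(n)\right)$, and for fixed $k$ the error term tends to $0$ as $n\to\infty$ because the Hardy--Ramanujan estimate $p(n)=\exp(O(\sqrt{n}))$ is dominated by the exponential decay $q^{-n/(2k)}$. There is no genuine obstacle here, since Lemmas~\ref{le:vanishingchi} and \ref{le:boundbigi} already do all the serious work; the only small point to verify is that the single uniform bound $q^{n-\lceil n/(2k)\rceil}p(n)$ really captures the entire tail, which is why one wants the \emph{summed} (rather than term-by-term) form of Lemma~\ref{le:boundbigi}.
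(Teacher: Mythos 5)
Your proposal is correct and follows the paper's own proof essentially step for step: apply Proposition~\ref{pr:exactcount} to $\chi_k$, read off the main term $\pi(n,k)q^n$ from $H^0$, kill the range $0<i<n/2k$ by Lemma~\ref{le:vanishingchi}, and bound the tail using the summed form of Lemma~\ref{le:boundbigi} together with the subexponential growth of $p(n)$. There is nothing to add.
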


Implicit in the second part of the proposition is the fact that $\lim_{n \ra \infty} n\cdot \pi(n,k)$ exists. In fact, this limit is known to converge to $k\cdot\omega(k)$, where $\omega(k)$ is the {\em Buchstab function}, which approaches $e^{-\gamma}$ as $k \ra \infty$.  So one can also write
\beq
\Phi(n,k) = (k/n)\omega(k)q^n + o(q^n/n)
\eeq
as $n \ra \infty$ with $k$ fixed, as Panario and Richmond do in \cite[Theorem 3.4]{panariorichmondbenor}; however, $\pi(n,m)$ and $q^{-n} \Phi(n,k)$ converge to each other more quickly than either one does to $k\omega(k)/n$, so the formulation used here gives a better error term.

\begin{proof}
By Proposition~\ref{pr:exactcount} we know that
\beq
\Phi(n,k) = \sum_{f \in \Conf_n(\F_q)} \chi_k(f) = 
\sum_{i=0} (-1)^i q^{n-i} \langle \chi_k, H^i(P_n;\Q) \rangle.
\eeq
The contribution of $H^0(P_n)$ to this alternating sum is precisely $q^n$ times  $\langle \chi_k, 1 \rangle = \pi(n,k)$.  By Lemma~\ref{le:vanishingchi}, each $H^i(P_n)$ with $0<i<n/2k$ contributes $0$.  This leaves the values of $i$ greater than or equal to $n/2k$, for which Lemma~\ref{le:boundbigi} gives 
\beq
\sum_i |\langle \chi_k, H^i(P_n;\Q) \rangle| \leq p(n).
\eeq
This immediately gives the first claim.  The limit in the second claim then follows from the fact that $p(n)$ grows subexponentially with $n$.
\end{proof}

\bigskip

In case $k=1$, Proposition~\ref{pr:smallirred} says that the number of irreducible monic polynomials of degree $n$ is approximately $q^n/n$ with an error term at most on order of $q^{n/2}$ (note that if we set $N=q^n$, this approximation is $\frac{N}{\log N}$, just as in the usual Prime Number Theorem over $\Z$).  In fact, there is a well-known exact formula for the number of such polynomials:

\begin{equation}
\label{eq:exactirred}
\sum_{\ell|n}\frac{\mu(n/\ell)}{n}q^\ell
\end{equation}

One can reproduce the formula \eqref{eq:exactirred}, which appeared as (5) in Table A in the introduction, by computing the inner products  $\langle \chi_1, H^i(P_n;\Q) \rangle$ using the Lehrer-Solomon description, as we now sketch. Most summands of \eqref{eq:hi} will not contribute, since most centralizers $Z(c_\mu)$ do not contain an $n$-cycle. The only conjugacy classes  which do contribute are those contained in the centralizer of an $n$-cycle; since this centralizer is generated by the $n$-cycle itself, the conjugacy classes it contains are precisely the products $c_{(\ell)}$  of $\ell$ disjoint $\frac{n}{\ell}$-cycles. The summand of \eqref{eq:hi} for $c_{(\ell)}$ contributes to $H^{n-\ell}(P_n;\Q)$, and thus its contribution to \eqref{eq:exactirred} is weighted by $q^\ell$. All that remains is to verify that \[\langle \chi_1, \Ind_{Z(c_{(\ell)})}^{S_n} \xi_{(\ell)}\rangle = (-1)^\ell\frac{\mu(n/\ell)}{n}.\] This is straightforward but requires a case-by-case analysis of the specific characters $\xi_\mu$, so we do not carry out the full computation here.

The exact formula for $\Phi(n,1)$ can be used, with some care,   to reproduce Proposition~\ref{pr:smallirred}: note that both $\pi(n,k)$ and $\Phi(n,k)$ can be expressed as sums over partitions of $n$ into parts of size no smaller than $n/k$, and it follows from \eqref{eq:exactirred} that the proportion of polynomials with irreducible factor degrees $n_1, \ldots, n_r$ is very close to the proportion of permutations in $S_n$ with cycle lengths $n_1, \ldots, n_r$.

\para{Polynomials with factors of distinct degrees}
Another interesting case is the enumeration of monic squarefree polynomials in which the degrees of all irreducible factors are distinct.  Let $D_q(n)$ be the number of degree $n$ polynomials in $\F_q[T]$ with all irreducible factors of distinct degree. Then the asymptotics of $D_q(n)$ can be studied just as above, with $\chi_k$ replaced by the characteristic function of the subset $\Sigma=\Sigma_n$ of $S_n$ consisting of permutations with distinct cycle lengths.  We will be able to bound the contribution of $\langle \chi_\Sigma, H^i(P_n;\Q) \rangle$  large $i$ just as above, but in this setting  there is no vanishing statement like Lemma~\ref{le:vanishingchi}.  Still, the arguments above show the following:

\begin{prop}[{\bf Degree \boldmath$n$ polynomials with distinct irreducible factors}]  
There are real constants $a_0,a_1, a_2,\ldots$ such that, for each $q$, we have
\beq
\lim_{n \ra \infty} \frac{D_q(n)}{q^n} = a_0 + \frac{a_1}{q}  + \frac{a_2}{q^2} + \ldots
\eeq
\label{pr:distinct}
\end{prop}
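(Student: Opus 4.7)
The plan is to mimic the proof of Proposition~\ref{pr:smallirred}, with $\chi$ replaced by the characteristic function $\chi_\Sigma$ of the subset $\Sigma_n\subset S_n$ of permutations having pairwise distinct cycle lengths. By Proposition~\ref{pr:exactcount},
\[\frac{D_q(n)}{q^n}\ =\ \sum_{i=0}^{n}(-q)^{-i}\,\langle\chi_\Sigma,H^i(P_n;\Q)\rangle,\]
so the task reduces to (i) producing, for each $i\geq 0$, a limit $b_i=\lim_{n\to\infty}\langle\chi_\Sigma,H^i(P_n;\Q)\rangle$, and (ii) passing the limit $n\to\infty$ through the outer summation. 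Setting $a_i=(-1)^i b_i$ will then prove the proposition.

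Step (ii) is the easy half and follows the template of Proposition~\ref{pr:smallirred}. Since $|\chi_\Sigma|\leq 1$ pointwise, Lemma~\ref{le:boundbigi} gives the uniform bound $|\langle\chi_\Sigma,H^i(P_n;\Q)\rangle|\leq p(2i)$, subexponential in $i$. Hence $\sum_i p(2i)q^{-i}$ dominates the series uniformly in $n$, and a standard split into $i<I$ (where the coefficients stabilize by step (i)) and $i\geq I$ (where the dominated tail is small independently of $n$) justifies the interchange of limit and sum.

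Step (i) is the substantive point. Since $\chi_\Sigma$ is \emph{not} given by any character polynomial, we cannot appeal to \cite[Theorem~1.6]{CEF}; instead we invoke the Lehrer--Solomon decomposition $H^i(P_n;\C)=\bigoplus_\mu\Ind_{Z(c_\mu)}^{S_n}(\xi_\mu)$ indexed by conjugacy classes $\mu$ with $n-i$ cycles. For $n\geq 2i$ these are in bijection with ``non-fixed shapes'' $\mu^\ast=(\mu_2,\mu_3,\ldots)$ satisfying $\sum_{j\geq 2}(j-1)\mu_j^\ast=i$, with the fixed-point count $\mu_1=n-\sum_{j\geq 2}j\mu_j^\ast$ growing in $n$. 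For each fixed $\mu^\ast$ we have $Z(c_\mu)=Z^\ast\times S_{\mu_1}$, with $Z^\ast$ a \emph{fixed} finite group on which $\xi_\mu$ restricts to a fixed one-dimensional character $\xi_{\mu^\ast}$, while $\xi_\mu$ is trivial on the $S_{\mu_1}$ factor; Frobenius reciprocity yields
\[\langle\chi_\Sigma,\Ind_{Z(c_\mu)}^{S_n}(\xi_\mu)\rangle\ =\ \frac{1}{|Z^\ast|\,\mu_1!}\sum_{\tau\in Z^\ast}\overline{\xi_{\mu^\ast}(\tau)}\sum_{\rho\in S_{\mu_1}}\chi_\Sigma(\tau\cdot\rho).\]
The product $\tau\cdot\rho$ has distinct cycle lengths precisely when $\tau$ does, $\rho$ does, and the cycle lengths of $\rho$ avoid the (fixed, finite) set $L(\tau)$ of cycle lengths of $\tau$.

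Thus step (i) reduces to the following classical asymptotic, which is the main analytic obstacle: for every finite $L\subset\Z_{>0}$, the proportion of $\rho\in S_N$ with pairwise distinct cycle lengths all avoiding $L$ converges as $N\to\infty$ to a positive real number. This is proved by singularity analysis of the exponential generating function $\prod_{k\notin L}(1+t^k/k)$: writing $\prod_{k\geq 1}(1+t^k/k)=(1-t)^{-1}\exp\sum_{k\geq 1}(\log(1+t^k/k)-t^k/k)$, the corrective sum has summand $O(k^{-2})$ and so extends continuously to $t=1$ with value $-\gamma$, yielding a simple pole of residue $e^{-\gamma}/\prod_{k\in L}(1+1/k)$ for $\prod_{k\notin L}(1+t^k/k)$ at $t=1$; a transfer theorem then gives convergence of the $N$th Taylor coefficient to the same value. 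With this input secured, each inner sum in the Frobenius-reciprocity formula has a finite limit, the double sum over the fixed group $Z^\ast$ converges, and summing over the finitely many shapes $\mu^\ast$ produces $b_i$; combined with step (ii) this closes the argument.
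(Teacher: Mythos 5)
Your proof is correct, and its convergence half (your step (ii)) is exactly the paper's argument: the paper likewise sets $a_i=(-1)^i\lim_{n\to\infty}\langle\chi_\Sigma,H^i(P_n;\Q)\rangle$, bounds $|a_i|\leq p(2i)$ via Lemma~\ref{le:boundbigi}, and interchanges limit and sum by truncating at a threshold and dominating the tail by $\sum p(2i)q^{-i}$. Where you genuinely go beyond the paper is step (i): the paper explicitly declines to prove that the limits $\lim_{n}\langle\chi_\Sigma,H^i(P_n;\Q)\rangle$ exist (``To show that this limit exists requires some combinatorial argument\ldots''), whereas you supply that argument. Your reduction is sound: for $n\geq 2i$ the Lehrer--Solomon summands are indexed by the fixed finite set of shapes $\mu^\ast$ with $\sum_{j\geq 2}(j-1)\mu_j^\ast=i$; the splitting $Z(c_\mu)=Z^\ast\times S_{\mu_1}$ with $\xi_\mu$ trivial on the $S_{\mu_1}$ factor matches the paper's description of the characters $\xi_\mu$; and Frobenius reciprocity correctly reduces everything to the asymptotic proportion of $\rho\in S_N$ with pairwise distinct cycle lengths avoiding a fixed finite set $L$.

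The one soft spot is the final appeal to ``a transfer theorem.'' The product $\prod_{k\geq 1}(1+t^k/k)$ diverges for $|t|>1$, and the zeros of its factors lie at $|t|=k^{1/k}$, accumulating on the unit circle; so one does not have the analytic continuation to a $\Delta$-domain that Flajolet--Odlyzko singularity analysis requires, and the transfer theorem does not apply off the shelf. The repair is standard: the correction series $\sum_k\bigl(\log(1+t^k/k)-t^k/k\bigr)$ has absolutely summable Taylor coefficients, hence so does its exponential $G$ (Wiener algebra), and then $[t^N]\bigl(G(t)/(1-t)\bigr)=\sum_{m\leq N}[t^m]G\to G(1)=e^{-\gamma}$ directly; when $1\in L(\tau)$ (which happens, e.g., for $\tau=\mathrm{id}\in Z^\ast$ whenever the identity coset is nonempty) one must also absorb a factor $(1+t)^{-1}$, which works because the accompanying numerator vanishes at $t=-1$, so the coefficient limit is parity-independent. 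Alternatively one can simply quote the method of \cite{flajoletgourdonpanario}, which the paper itself cites for the case $L=\emptyset$. With that repair your write-up is complete and in fact more self-contained than the paper's own proof.
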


\begin{proof}

We define \[a_i=(-1)^i \lim_{n\to \infty} \langle \chi_\Sigma,\,H^i(P_n;\Q)\rangle.\] To show that this limit exists requires some combinatorial argument, since $\chi_\Sigma$ is definitely not given by a character polynomial; however, once the limit is known to exist, Lemma~\ref{le:boundbigi} implies that $|a_i|\leq p(2i)$. For example, $a_0=\lim \langle \chi_\Sigma,1\rangle = \lim \frac{|\Sigma_n|}{n!}$ is the probability that a random permutation has distinct cycle lengths, which known to converge to $e^{-\gamma}$.

By Proposition~\ref{pr:exactcount} we have
\[q^{-n}D_q(n)=\sum_{i=0}^{\infty}(-1)\frac{ \langle \chi_n, H^i(P_n;\Q) \rangle }{q^i}.
\]
For each fixed $k$, the truncated sum $q^{-n} \sum_{i=0}^{k-1}(-q)^i \langle \chi_n, H^i(P_n;\Q) \rangle $
approaches $\sum_{i=0}^{k-1} a_i q^{-i}$ as $n \ra \infty$.  Moreover, the contribution of the cohomology of larger degree is
\beq
 \sum_{i={k}}^\infty (-1)^i (-q)^{-i} \langle \chi_n, H^i(P_n;\Q) \rangle,
\eeq
which is bounded in absolute value by $\sum_{i={k}}^\infty p(2i) q^{-i}$ by Lemma~\ref{le:boundbigi}. Thus 
\beq
\lim_{n \ra \infty}\  \Big|q^{-n} D_q(n) - \sum_{i=0}^\infty a_i q^{-i}\Big|\leq 2\sum_{i={k}}^\infty p(2i) q^{-i}.
\eeq
Since the quantity on the right approaches $0$ as $k$ grows, and $k$ was chosen arbitrarily, this completes the proof.
\end{proof}

This conforms with \cite[Theorem~6]{flajoletgourdonpanario}, which gives an infinite product formula for $\lim_{n \ra \infty} q^{-n} D_q(n)$ and shows that this limit converges to $e^{-\gamma}$ as $q \ra \infty$.

\para{Statistics uncorrelated with characteristic polynomials}
We have seen so far that, with respect to many natural statistics, the distribution of degrees of irreducible factors of random squarefree polynomials behave like the cycle lengths of random permutations ``up to $O(\frac{1}{q})$" ; for example, the average number of linear factors of a squarefree polynomial is $1-\frac{1}{q}+\frac{1}{q^2}+\cdots$, while the average number of fixed points of a permutation is exactly $1$.  

On the other hand, there are some statistics whose limiting asymptotics for polynomials behave {\em exactly} like the limiting asymptotics for the corresponding functions on permutations.  For example, the probability that a permutation has an even number of cycles is $1/2$, and we proved in Lemma~\ref{lem:signrep} and \eqref{eq:muaverage} that the probability that a random squarefree polynomial has an even number of prime factors is $1/2$ as well.

What distinguishes the two kinds of statistics?  The following  gives a partial answer.

\begin{definition}
\label{def:uncorrelated}
Let $(\chi_n)_{n\in \N}$ be a sequence of class functions $\chi_n$ on $S_n$ satisfying $|\chi_n(\sigma)| \leq 1$ for all $\sigma \in S_n$. We say that the sequence of class functions $\chi_n$ is \emph{uncorrelated with all character polynomials} if the limit \[x\coloneq \lim_{n\to \infty}\langle \chi_n, 1\rangle_n\]
exists, and furthermore for every character polynomial $P$ we have
\beq
 \lim_{n \ra \infty} \langle \chi_n, P\rangle_n=\lim_{n\ra \infty} \langle \chi_n,1\rangle_n \langle 1,P\rangle_n=\lim_{n\to \infty}\langle x,P\rangle_n
\eeq
This condition on $\chi_n$ can be thought of as saying that with respect to all finite moments, $\chi_n$ behaves like $x$ times the uniform distribution.
\end{definition}

\begin{prop} Assume that $\chi_n$ is uncorrelated with all character polynomials, with average value $x=\lim_{n\to \infty}\langle \chi_n,1\rangle$.
Then for every $q$, the average of $\chi_n(f)$ over all monic squarefree degree-$n$ polynomials $f(T)$ in $\F_q[T]$ approaches the same limit $x$ as $n \ra \infty$.
\label{pr:permpoly}
\end{prop}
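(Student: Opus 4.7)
The plan is to combine the twisted Grothendieck--Lefschetz formula (Theorem~\ref{th:chi}) with the structural results on $H^*(P_n;\Q)$ from \cite{CEF}, using the uncorrelated hypothesis to compute the limit term-by-term. Applying Theorem~\ref{th:chi} to the FI-CHA $\AA(x_1-x_2)$ and the class function $\chi_n$ (the theorem is stated for class functions, not only character polynomials), we obtain
\[
\frac{1}{|\Conf_n(\F_q)|}\sum_{f\in\Conf_n(\F_q)}\chi_n(f) \;=\; \frac{1}{1-q^{-1}}\sum_{i=0}^{n} \frac{(-1)^i\,\langle\chi_n,H^i(P_n;\Q)\rangle}{q^i}.
\]

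Next, I would use the fact, recalled in the proof of Lemma~\ref{le:vanishingchi}, that $H^i(P_\bullet;\Q)$ is an $\FIsharp$-module, so by \cite[Theorem~2.67]{CEF} its character is given by a \emph{single} character polynomial $Q_i$ simultaneously for all $n$. Applying the uncorrelated hypothesis to the character polynomial $Q_i$ gives
\[
\lim_{n\to\infty}\langle\chi_n,H^i(P_n;\Q)\rangle = \lim_{n\to\infty}\langle\chi_n,Q_i\rangle_n = x\cdot\lim_{n\to\infty}\langle 1,Q_i\rangle_n = x\cdot\dim H^i(\Conf(\C);\Q),
\]
using in the last step the transfer identification $\langle 1,H^i(P_n;\Q)\rangle=\dim H^i(\Conf_n(\C);\Q)$. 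By Arnol'd's computation this stable dimension is $1$ for $i=0,1$ and $0$ for $i\geq 2$.

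The main technical point is to interchange the limit $n\to\infty$ with the infinite sum over $i$. Here I would invoke Lemma~\ref{le:boundbigi}: since $|\chi_n(\sigma)|\leq 1$ by hypothesis, we have the bound $|\langle\chi_n,H^i(P_n;\Q)\rangle|\leq p(2i)$, uniformly in $n$. Since $p(2i)$ grows subexponentially, the series $\sum_i p(2i)/q^i$ converges, so dominated convergence justifies taking the limit termwise. Combining these ingredients yields
\[
\lim_{n\to\infty}\frac{1}{|\Conf_n(\F_q)|}\sum_{f\in\Conf_n(\F_q)}\chi_n(f) = \frac{1}{1-q^{-1}}\Bigl(x-\frac{x}{q}\Bigr) = x,
\]
as desired. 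The main obstacle is simply the uniform tail estimate, which is essentially free from Lemma~\ref{le:boundbigi}; the key conceptual input is that every $H^i(P_n;\Q)$ is controlled by a fixed character polynomial, which is exactly the hypothesis the uncorrelated condition is designed to exploit.
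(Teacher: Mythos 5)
Your proof is correct and is essentially the argument the paper intends: the paper states this proposition without writing out a proof, but the template is exactly the one used for Proposition~\ref{pr:distinct}, namely apply Proposition~\ref{pr:exactcount}, use Lemma~\ref{le:boundbigi} (valid here since $|\chi_n|\leq 1$ is part of Definition~\ref{def:uncorrelated}) to control the tail uniformly in $n$, and pass to the limit term by term. Your identification of each termwise limit as $x\cdot\dim H^i(\Conf(\C);\Q)$ via the $\FIsharp$ character polynomial $Q_i$ and the uncorrelated hypothesis, followed by Arnol'd's computation to collapse the series to $x(1-q^{-1})/(1-q^{-1})=x$, is exactly the intended use of the machinery.
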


An natural example of a sequence $\chi_n$ uncorrelated with all character polynomials is the characteristic function $\chi_n=\chi_{A_n}$ of $A_n$, in which case we of course have $x=1/2$, since $\langle \chi_{A_n},1\rangle=1/2$ at each finite limit. In this case Proposition~\ref{pr:permpoly} reproduces in the limit the fact demonstrated in \eqref{eq:muaverage}, that half of all squarefree polynomials have an even number of irreducible factors.  An elementary but slightly more involved argument shows if $S_n^{(4)}$ denotes the set of permutations whose number of cycles is divisible by 4, then  the characteristic function $\chi_n=\chi_{S_n^{(4)}}$ is uncorrelated with all character polynomials. In this case the inner products $\langle \chi_{S_n^{(4)}},1\rangle$ vary with $n$, but as $n\to \infty$ they converge to $x=1/4$.  Therefore the proportion of  squarefree polynomials whose number of irreducible factors is a multiple of $4$ approaches $1/4$ as $n \ra \infty$.

The function $X_1$, by contrast, fails to satisfy the conditions of Definition~\ref{def:uncorrelated}.  For one thing, it is not uniformly bounded, but, more importantly, it is clearly not uncorrelated with $P = X_1$ itself: we have $\langle X_1,X_1\rangle=2$ for all $n\geq 2$, which is not equal to $\langle X_1,1\rangle\langle 1,X_1\rangle=1\cdot 1=1$.  And, indeed, we have seen that the average value of $X_1(f)$ depends on $q$, though it approaches the corresponding random permutation statistic as $q \ra \infty$. Note that by Remark~\ref{rem:poisson}, $X_1$ \emph{is} uncorrelated in the limit from any character polynomial involving only $X_2,X_3,\ldots$; this shows that Definition~\ref{def:uncorrelated} really must be satisfied for \emph{all} $P$.

\section{Maximal tori in $\GL_n(\F_q)$}
\label{s:maximaltori}

Our goal in this section is to present in some depth another example of how representation stability for the cohomology of a complex variety is reflected in the combinatorial stability of associated counting problems over a finite field.  Here we will have cohomology of flag varieties on the one hand, and counting problems for maximal tori in the 
finite group $\GL_n(\F_q)$ on the other.  The results of this section are in large measure already proved in \cite{lehrer:rationaltori}; our goal here is to explain the relationship between the results and representation-stable cohomology, and to emphasize the analogy between the questions here and those about squarefree polynomials.

\subsection{Parameterizing the set of maximal tori in $\GL_n(\F_q)$}

For any variety $X$ one can define 
\[\PConf_n(X)\coloneq \{(x_1,\ldots,x_n)\in X\,|\, x_i\neq x_j\}\]
and its quotient $\Conf_n(X)\coloneq \PConf_n(X)/S_n$. Many of the results from the previous sections can be extended  in some form to this situation. However, the \'etale cohomology of $\PConf_n(X)$ will be much more complicated in general than it was for $\PConf_n=\PConf_n(\A^1)$, thanks to the contribution of $H^*_{\et}(X;\Q_\ell)$. In particular, for most varieties over $\F_q$ the action of $\Frob_q$ on $H^*_{\et}(X;\Q_\ell)$ is much more complicated than just multiplication by a power of $q$ (and in fact is quite difficult to compute, even when $X$ is 1-dimensional), so no simple formula like Theorem~\ref{th:chi} will be possible.

However, for projective space $\PP^m$ it is true that $\Frob_q$ acts on each $H^{2i}_{\et}(\PP^m;\Q_\ell)$ by $q^i$, so we could hope for a complete answer in this case. In this section we will consider a variant of $\PConf_n(\PP^m)$, where we require that points be not just distinct, but in \emph{general position}.

\begin{definition}
Let $\PP^{n-1}$ be the $(n-1)$-dimensional projective space as a scheme over $\Z$.  For any field $k$  the $k$-points $\PP^{n-1}(k)$ can be identified with the set of lines in the $n$-dimensional vector space $k^n$.
Inside the $n$-fold product $(\PP^{n-1})^n$, we define:
\[\tT_n\coloneq \big\{\,(L_1,\ldots,L_n)\,\big|\,L_1,\ldots,L_n\in \PP^{n-1}\text{ are lines in general position}\,\big\}\] For lines $L_1,\ldots,L_n\in \PP^{n-1}(k)$ to be in \emph{general position} means that the corresponding lines in $k^n$ are linearly independent, and thus give an internal direct sum $k^n=L_1\oplus\cdots\oplus L_n$. We may consider $\tT_n$ as a smooth scheme over $\Z$ (this is not obvious, but can be deduced from \cite[Proposition 9.1.1]{Fu}.) The natural action of $S_n$ on $(\PP^{n-1})^n$ by permuting the factors preserves $\tT_n$ (and in fact restricts to a free action on $\tT_n$). We define $\T_n$ to be the quotient $\tT_n/S_n$.
\end{definition}

\begin{remark}
Just as we saw in Remark~\ref{rem:quotient}, the $k$-points $\T_n(k)$ are not just the quotient of $\tT_n(k)$ by $S_n$. Instead the $k$-points $\T_n(k)$ correspond to sets $\{L_1,\ldots,L_n\}$ of lines in general position in $\overline{k}^n$ for which the \emph{set} of lines is invariant under $\Gal(\overline{k}/k)$, not each line itself. For example, the lines $L_1=\langle (1,\ i)\rangle$ and $L_2=\langle (1,\ -i)\rangle$ in $\C^2$  are in general position, and the set $\{L_1,L_2\}$ is invariant under complex conjugation, so it corresponds to a point of $\T_2(\R)$. As we now explain, such $\Gal(\kbar/k)$-invariant sets correspond naturally to \emph{maximal tori} defined over $k$.%,  just as points in $\Conf_n(k)$ corresponded to polynomials with coefficients in $k$.
\end{remark}

\para{The variety of maximal tori}
Given a line $L$ in $\overline{k}\,{}^n$, let $G_L$ be the group of automorphisms of $\overline{k}\,{}^n$ that preserve $L$; this is an algebraic subgroup of ${\GL_n}$  defined over $\overline{k}$. For example, for $L=\langle (1,\ 0)\rangle$ in $\C^2$, we have
\[G_L=\left\{\left.\begin{pmatrix}a&b\\c&d\end{pmatrix}\in \GL_2\,\right|\,c=0\right\}.\]
Given a set $\LL=\{L_1,\ldots,L_n\}$ of $n$ lines in general position in $\overline{k}\,{}^n$, let \[G_{\LL}=G_{L_1}\cap\cdots\cap G_{L_n}\] be the subgroup of $\GL_n$ preserving each line in $\LL$.

The key property is that if the set $\LL$ is preserved by $\Gal(\overline{k}/k)$ for some subfield $k\subset \overline{k}$, then the group $G_{\LL}$ will be invariant under $\Gal(\overline{k}/k)$.  By Galois descent, $G_{\LL}$ is thus defined over $k$. For example, consider the lines $L_1=\langle (1,\ i)\rangle$ and $L_2=\langle (1,\ -i)\rangle$ in $\C^2$. The individual subgroups $G_{L_1}$ and $G_{L_2}$ are not defined over $\R$; indeed we have
\begin{align*}G_{L_1}&=\left\{\left.\begin{pmatrix}a&b\\c&d\end{pmatrix}\in \GL_2\,\right|\,b+c=\ \ (a-d)i\right\}\\G_{L_2}&=\left\{\left.\begin{pmatrix}a&b\\c&d\end{pmatrix}\in \GL_2\,\right|\,b+c=-(a-d)i\right\}
\end{align*}
But their intersection $G_{\LL}$ is equal to
\[G_{\LL}=G_{L_1}\cap G_{L_2}=\left\{\left.\begin{pmatrix}a&b\\c&d\end{pmatrix}\in \GL_2\,\right|\,\begin{matrix}a=d,\\b=-c\end{matrix}\ \right\}=\left\{\begin{pmatrix}a&b\\-b&a\end{pmatrix}\in \GL_2\right\}\]
and thus is defined over $\R$.

In general, a \emph{torus} in $\GL_n$ over $k$ is an algebraic subgroup of $\GL_n$ defined over $k$ which becomes diagonalizable over $\overline{k}$. A torus is \emph{maximal} if it is not contained in any larger torus.  Each maximal torus in $\GL_n$ over $k$ becomes isomorphic to ${\G_m}^{\oplus n}$ over $\overline{k}$. The groups $G_{\LL}$ above are all maximal tori, since with respect to a basis $x_1\in L_1,\ldots,x_n\in L_n$ they consist just of diagonal matrices. Conversely, if $T$ is a maximal torus in $\GL_n$ over $k$, then its $n$ eigenvectors (which are obviously in general position) define a set $\LL_T=\{L_1,\ldots,L_n\}$ in $\overline{k}\,{}^n$. Since $T$ is defined over $k$, the property of being an eigenvector of $T$ is preserved by $\Gal(\overline{k}/k)$, so the set $\LL_T$ is preserved by $\Gal(\overline{k}/k)$. This gives the following description, analogous to the identification of $\Conf_n(k)$ as the space of squarefree polynomials in $k[T]$.

\begin{observation}
\label{ob:tori}
The $k$-points $\T_n(k)$ parametrize maximal tori over $k$ in $\GL_n$. 
\end{observation}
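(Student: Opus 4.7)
My plan is to prove the observation by constructing mutually inverse maps between $\T_n(k)$ and the set of maximal tori over $k$ in $\GL_n$, using the Galois-theoretic description of $\T_n(k)$ suggested by the preceding discussion and by the remark analogous to Remark~\ref{rem:quotient}.

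First I would identify $\T_n(k)$ explicitly. Since $S_n$ acts freely on $\tT_n$ (the lines in a general-position configuration are distinct, so no nontrivial permutation can fix an ordered tuple), the map $\tT_n \to \T_n$ is an \'etale $S_n$-cover. By the same descent reasoning used for $\Conf_n$ in Remark~\ref{rem:quotient}, a $k$-point of $\T_n$ corresponds to an unordered set $\LL = \{L_1, \ldots, L_n\}$ of lines in general position in $\kbar^n$ with the property that the \emph{set} (not each individual line) is preserved by $\Gal(\kbar/k)$. Equivalently, $\Gal(\kbar/k)$ acts on $\LL$ by a permutation $\sigma \in S_n$, well-defined up to the choice of ordering.

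Given such an $\LL$, I would form the group $G_\LL \subset \GL_n$ over $\kbar$ consisting of automorphisms that preserve each $L_i$ (not as a set but individually). Choosing a basis $x_i \in L_i$, this subgroup is exactly the diagonal torus with respect to that basis, so $G_\LL$ is a maximal torus of $\GL_n$ over $\kbar$, isomorphic to $\G_m^n$. The key point is that although each $G_{L_i}$ is only defined over the field of definition of $L_i$, any $\tau \in \Gal(\kbar/k)$ permutes the $L_i$ by some element of $S_n$, and hence permutes the subgroups $G_{L_i}$ accordingly; the intersection $G_\LL = \bigcap_i G_{L_i}$ is therefore $\Gal(\kbar/k)$-stable. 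By Galois descent, $G_\LL$ descends to a closed subgroup scheme of $\GL_n$ over $k$, which is a maximal torus since this can be checked after base change to $\kbar$.

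For the inverse construction, given a maximal torus $T \subset \GL_n$ defined over $k$, I would use the fact that $T_{\kbar}$ is conjugate to the standard diagonal torus in $\GL_n(\kbar)$, so its action on $\kbar^n$ decomposes $\kbar^n = L_1 \oplus \cdots \oplus L_n$ into distinct character eigenlines. This produces an unordered set of $n$ lines in general position. Since $T$ is defined over $k$, the characters (and hence the eigenspace decomposition as a set) are permuted by $\Gal(\kbar/k)$, giving a $k$-point of $\T_n$. I would then check that the two constructions are mutually inverse: starting from $\LL$, the eigenlines of $G_\LL$ recover $\LL$ tautologically (by construction $L_i$ is the $i$th weight space); starting from $T$, the subgroup preserving each eigenline is again $T$ by maximality.

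The only genuine subtlety I expect is verifying the Galois descent step carefully, specifically that $G_\LL$ descends to a \emph{scheme} over $k$ and not merely a $\Gal$-invariant subset of $\GL_n(\kbar)$; the classical Galois descent for closed subschemes of a quasi-projective $k$-scheme handles this, so this is routine. The second subtlety is to be clear that we are parametrizing maximal tori as \emph{subgroup schemes} of $\GL_n$, not up to $\GL_n(k)$-conjugacy, which is the only reading that makes the statement bijective (over a non-algebraically-closed $k$ there are several $\GL_n(k)$-conjugacy classes of $k$-rational maximal tori, corresponding to the different cycle types of $\sigma \in S_n$).
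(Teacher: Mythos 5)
Your proposal is correct and follows essentially the same route as the paper: the paper's justification for this observation is precisely the preceding discussion constructing $G_{\LL}$ from a Galois-stable set of lines via Galois descent, and recovering $\LL_T$ from a maximal torus $T$ as its set of eigenlines. Your added remarks (checking the constructions are mutually inverse, and that tori are parametrized as subgroup schemes rather than up to conjugacy) are sensible clarifications but do not change the argument.
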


We recall some well-known facts about tori (see e.g.\ \cite[III.8]{Bo2}).
A torus $T$ is {\em $k$-split} if it is isomorphic over $k$ to a product of copies of $\G_m$.  %This is equivalent to the space $X(Y)_k$ of $k$-characters of $T$ spanning $k[T]$.
%In contrast, a torus $T$ is {\em $k$-anisotropic} if there is no homomorphism $T\to \G_m$ of algebraic groups.
For any torus $T$ over $k$, there exists a finite Galois extension of $k$ over which $T$ becomes split.
All maximal tori are conjugate in $\GL_n$ over $\overline{k}$, and all $k$-split maximal tori are conjugate in $\GL_n$ over $k$. A torus over $k$ is \emph{irreducible} if it is not isomorphic over $k$ to a product of tori. Every torus $T$ over $k$ factors uniquely (up to reordering) as a product of irreducible tori over $k$.

\subsection{Twisted Grothendieck--Lefschetz on ${\cal T}_n$}  Observation~\ref{ob:tori} tells us that the $\F_q$-points $\T_n(\F_q)$ parametrize the set of maximal tori $T$ defined over $F_q$ in $\GL_n$. Such a torus $T$ determines a subgroup $T(\F_q)$ of the finite group $\GL_n(\F_q)$, which is why the space $\T_n(\F_q)$ has been of interest to finite group theorists. 

Consider a maximal torus $T$ in $\GL_n$ defined over $\F_q$.
%If $T$ is irreducible, it splits over $\F_{q^n}$. In general, 
Since $T$ is defined over $\F_q$, the Frobenius map $\Frob_q$ preserves $T$ and thus permutes the eigenvectors $\LL_T=\{L_1,\ldots,L_n\}$. This defines a permutation $\sigma_T\in S_n$, defined up to conjugacy, and the cycle type of $\sigma_T$ corresponds to the factorization of $T$ into irreducible factors. For example, if $T$ is $\F_q$-split then $\sigma_T=\id$; if $T$ splits as a product of an $\F_q$-split torus with two 2-dimensional irreducible tori %(which split over $\F_{q^2}$)
and one 3-dimensional irreducible torus, %(which splits over $\F_{q^3}$),
then $\sigma_T=(1\ 2)(3\ 4)(5\ 6\ 7)$.

We can count the number $\left|\T_n(\F_q)\right|$ of maximal $\F_q$-tori in $\GL_n(\F_q)$ via the Grothendieck--Lefschetz formula, relating this to the cohomology $H^*_{\et}(\T_n;\Q_\ell)$. Moreover just as we did in Section~\ref{sec:pointcountingandstabilization}, we can count more interesting statistics for maximal $\F_q$-tori via the action of $S_n$ on the cohomology $H^*_{\et}(\tT_n;\Q_\ell)$ of the cover $\tT_n$. To understand $H^*_{\et}(\tT_n;\Q_\ell)$, we will relate it to the 
singular cohomology of $\tT_n(\C)$, which is well-understood.

Given a class function $\chi$ on $S_n$ and a maximal torus $T\in \T_n(\F_q)$, we write $\chi(T)$ for $\chi(\sigma_T)$. 
The \emph{co-invariant algebra} $R[x_1,\ldots,x_n]$ is the quotient   \[R[x_1,\ldots,x_n]\coloneq \Q[x_1,\ldots,x_n]/I_n,\] where  $I_n$ be the ideal of
  $\Q[x_1,\ldots,x_n]$ generated by all symmetric polynomials with
  zero constant term. Since $I_n$ is a homogeneous ideal, the natural grading on $\Q[x_1,\ldots,x_n]$ descends to a  grading
  \[R[x_1,\ldots,x_n]=\bigoplus_iR_i[x_1,\ldots ,x_n].\]
  
  The main theorem of this section is the following analogue of Theorem~\ref{th:chi}.   This result was first proved by Lehrer as Corollary $1.10'$ in \cite{lehrer:rationaltori}.  Unlike the argument we give here, Lehrer's proof did not invoke the Grothendieck--Lefschetz theorem.  
\begin{theorem}
\label{th:chitori}
Let $\chi$ be any class function on $S_n$. Then the sum of $\chi(T)$ over all maximal tori $T\in \T_n(\F_q)$ is equal to
\begin{equation}
\label{eq:chitori}
\sum_{T\in \T_n(\F_q)}\chi(T)=\sum q^{n^2-n-i}\langle \chi,R_i[x_1,\ldots,x_n]\rangle
\end{equation}
\end{theorem}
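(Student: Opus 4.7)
The plan is to adapt the proof of Theorem~\ref{th:chi} to this setting, with $\PConf_n$ replaced by $\tT_n$ and $\Conf_n$ replaced by $\T_n$. By linearity in $\chi$ it suffices to treat the case where $\chi$ is the character of an irreducible $S_n$-representation $V$. The free $S_n$-action on $\tT_n$ with quotient $\T_n$ gives an \'etale Galois cover, and the associated local system $\V$ on $\T_n$ has stalk $V$ at each $\F_q$-rational torus $T$, with $\Frob_q$ acting by the permutation $\sigma_T$ on the set of eigenlines. Thus the local Grothendieck--Lefschetz contribution at $T$ is $\tr(\Frob_q:\V_T) = \chi(\sigma_T) = \chi(T)$, and we obtain
\[
\sum_{T \in \T_n(\F_q)} \chi(T) = \sum_j (-1)^j \tr\big(\Frob_q : H^j_c(\T_n;\V)\big).
\]
Transfer, together with the triviality of the pullback of $\V$ to $\tT_n$, identifies $H^j_c(\T_n;\V) \cong \big(H^j_c(\tT_n;\Q_\ell) \otimes V\big)^{S_n}$, whose dimension (using self-duality of $V$) equals $\langle \chi, H^j_c(\tT_n;\Q_\ell)\rangle$.

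The essential new ingredient is a Borel-type computation of $H^*_{\et}(\tT_n;\Q_\ell)$ as a graded $S_n$-representation together with its Frobenius action. I would first observe that $\GL_n$ acts transitively on $n$-tuples of lines in general position, with the stabilizer of the coordinate axes equal to the diagonal torus, so $\tT_n \cong \GL_n/T_{\text{diag}}$. The projection $\GL_n/T \to \GL_n/B$ onto the full flag variety is an affine bundle with fiber $B/T \cong \A^{n(n-1)/2}$, hence induces an isomorphism on \'etale cohomology. Borel's theorem then identifies $H^*_{\et}(\GL_n/B;\Q_\ell)$ with the coinvariant algebra $R[x_1,\ldots,x_n]$, matching polynomial degree $i$ with cohomological degree $2i$, and the identification is $S_n$-equivariant for the Weyl group action. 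Consequently $H^{2i}_{\et}(\tT_n;\Q_\ell) \cong R_i[x_1,\ldots,x_n]$ as $S_n$-representations, and the odd-degree cohomology vanishes. Since the generators $x_i$ are Chern classes of the tautological line bundles on $(\PP^{n-1})^n$, they lie in $H^2(-;\Q_\ell(1))$, so $\Frob_q$ acts on $H^{2i}_{\et}(\tT_n;\Q_\ell)$ by the scalar $q^i$.

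Because $\tT_n$ is smooth of dimension $d = n^2 - n$, Poincar\'e duality provides an $S_n$-equivariant identification $H^{2d-2i}_c(\tT_n;\Q_\ell) \cong H^{2i}_{\et}(\tT_n;\Q_\ell)^\vee \otimes \Q_\ell(-d)$, on which $\Frob_q$ acts by $q^{d-i} = q^{n^2-n-i}$; the compactly supported cohomology in odd degrees vanishes. Since $(-1)^{2(d-i)} = +1$, the alternating Grothendieck--Lefschetz sum collapses to
\[
\sum_{T \in \T_n(\F_q)} \chi(T) = \sum_i q^{n^2-n-i} \langle \chi, R_i[x_1,\ldots,x_n]\rangle,
\]
as claimed. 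The principal obstacle I anticipate is verifying that the chain of identifications\,---\,$\tT_n \cong \GL_n/T$, the affine-bundle argument, Borel's theorem, and the Tate-twist accounting for Chern classes in the \'etale setting\,---\,all respect the $S_n$-action and the Frobenius structure simultaneously, so that the polynomial grading on $R$ really does correspond to cohomological degree with the prescribed Frobenius weights. Once these compatibilities are in hand, the argument is a direct transcription of the reasoning used for Theorem~\ref{th:chi}.
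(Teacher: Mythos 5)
Your proposal is correct and follows essentially the same route as the paper: reduce to irreducible characters, apply twisted Grothendieck--Lefschetz to the local system on $\T_n$ attached to $V$, use transfer to pass to $H^*_{\et}(\tT_n;\Q_\ell)\otimes_{\Q[S_n]}V$, and compute $H^*_{\et}(\tT_n)$ via the affine fibration over the flag variety together with Borel's identification with the coinvariant algebra and the resulting Frobenius weights. The only cosmetic differences are that the paper invokes the Poincar\'e-dualized form of the fixed point formula from the outset rather than dualizing at the end, and isolates the $S_n$-equivariance and Frobenius-action statements as separate lemmas (noting, as you anticipate, that the $S_n$-action does not descend to $\FF_n$, so equivariance of the comparison map must be established on $\tT_n$ directly).
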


Our proof of Theorem~\ref{th:chitori} depends on a number of lemmas connecting $\T_n(\F_q)$ with the flag variety $\FF_n(\C)=\GL_n(\C)/B$.

\begin{lemma}[{\bf Borel}]
\label{lem:ttnBorel}
The cohomology $H^{2i}(\tT_n(\C);\Q)$ is concentrated in even degrees, and there is an $S_n$-equivariant isomorphism \[H^{2i}(\tT_n(\C);\Q)\approx R_i[x_1,\ldots,x_n].\]
\end{lemma}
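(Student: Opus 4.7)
The plan is to identify $\tT_n(\C)$ with the homogeneous space $\GL_n(\C)/T$ (where $T\subset\GL_n(\C)$ is the diagonal maximal torus), and then invoke Borel's classical computation of the cohomology of $G/T$.

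First, I would exhibit an $S_n$-equivariant isomorphism $\tT_n(\C)\xrightarrow{\sim}\GL_n(\C)/T$. Given $(L_1,\ldots,L_n)\in\tT_n(\C)$, any choice of nonzero vectors $v_i\in L_i$ assembles into an invertible matrix $g\in\GL_n(\C)$ with column spans $L_i$; the ambiguity in the choice of the $v_i$ is exactly a right action of the diagonal torus $T$, while the transitivity of $\GL_n(\C)$ on such tuples (which is immediate from the definition of general position) shows the map is surjective. Under this identification, permuting the lines in $\tT_n$ corresponds to permuting the columns of $g$, i.e.\ to right multiplication by a permutation matrix. Since permutation matrices form a set of representatives for the Weyl group $W=N(T)/T\cong S_n$, the $S_n$-action on $\tT_n(\C)$ becomes the right $W$-action on $\GL_n(\C)/T$.

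Second, I would compare $\GL_n(\C)/T$ with the complex flag variety $\FF_n(\C)=\GL_n(\C)/B$, where $B$ is the Borel subgroup of upper triangular matrices. The natural projection $\GL_n(\C)/T\to \GL_n(\C)/B$ is a fiber bundle whose fiber is $B/T$, isomorphic to the unipotent radical of $B$, an affine space of dimension $\binom{n}{2}$. Because the fiber is contractible, this projection is a homotopy equivalence, giving
\[H^*(\tT_n(\C);\Q)\ \cong\ H^*(\FF_n(\C);\Q).\]
Borel's theorem now identifies the right-hand side with the coinvariant algebra: $H^*(\FF_n(\C);\Q)\cong R[x_1,\ldots,x_n]$ as graded rings, where the generators $x_i$ are of cohomological degree $2$. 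In particular the cohomology is concentrated in even degrees and $H^{2i}(\tT_n(\C);\Q)\cong R_i[x_1,\ldots,x_n]$ as vector spaces.

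Finally, I would verify $S_n$-equivariance by identifying the generators $x_i$ with geometric classes that visibly transform by the permutation representation. On $\tT_n(\C)$ there are $n$ tautological line bundles $\cL_1,\ldots,\cL_n$ with $\cL_i|_{(L_1,\ldots,L_n)}=L_i$; under the isomorphism with $\GL_n(\C)/T$ these are the line bundles associated to the $n$ characters of $T$, and Borel's theorem precisely identifies $x_i=c_1(\cL_i)$. The $S_n$-action on $\tT_n(\C)$ permutes the $\cL_i$ through the standard permutation representation, so it permutes the classes $x_i$ accordingly, which is exactly the standard action on $R[x_1,\ldots,x_n]$.

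The main obstacle is really just bookkeeping: Borel's theorem is classical, but it is most often stated non-equivariantly for $\FF_n$, and one needs to track the Weyl group action on $\GL_n/T$ and match it, via the affine fibration $\GL_n/T\to\GL_n/B$ and the tautological line bundles, with the permutation action of $S_n$ on $\tT_n$. Once this matching is done, the lemma follows.
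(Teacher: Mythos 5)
Your proposal is correct and follows essentially the same route as the paper: both rest on Borel's theorem applied via the tautological line bundles $\cL_i$, with $x_i\mapsto c_1(\cL_i)$ inducing the isomorphism with the coinvariant algebra, and the $S_n$-equivariance read off from the permutation of the $\cL_i$. Your explicit identification $\tT_n(\C)\cong\GL_n(\C)/T$ and the affine fibration over $\GL_n(\C)/B$ are just an unpacking of the citation to Borel (and the same fibration is what the paper uses in the subsequent comparison lemma), so there is nothing substantively different here.
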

\begin{proof}
For each $i=1,\ldots,n$ there is a natural line bundle $\cL_i$ over $\tT_n$ whose fiber over $\LL=(L_1,\ldots,L_n)$ is $L_i$. Specializing to $\tT_n(\C)$ this yields a complex line bundle $\cL_i\to \tT_n(\C)$, whose first Chern class is an element $c_1(\cL_i)\in H^2(\tT_n(\C);\Q)$.

Sending $x_i\mapsto c_1(\cL_i)$ determines a map $\Q[x_1,\ldots,x_n]\mapsto H^*(\tT_n(\C);\Q)$. Borel  \cite{Bo1}  proved that this map is surjective with kernel $I_n$. In other words, it gives an isomorphism $R[x_1,\ldots,x_n]\approx H^*(\tT_n(\C);\Q)$, which clearly takes $R_i[x_1,\ldots,x_n]$ to $H^{2i}(\tT_n(\C);\Q)$. See  \cite{Bo1} or \cite[Proposition
  10.3]{Fu} for a complete proof.
  \end{proof}

\begin{lemma}[{\bf action of Frobenius}]
\label{lem:ttncomparison}
There is an $S_n$-equivariant isomorphism \[H^i_{\et}(\tT_n{}_{/\Fqbar};\Q_\ell)\approx H^i(\tT_n(\C);\Q_\ell).\]  The Frobenius morphism $\Frob_q$ acts on $H^{2i}_{\et}(\tT_n{}_{/\Fqbar};\Q_\ell)$ by multiplication by $q^i$.
\end{lemma}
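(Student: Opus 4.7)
The plan is to transfer the Borel description of Lemma~\ref{lem:ttnBorel} from singular to \'etale cohomology by using the algebraicity of the generating Chern classes.

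The tautological line bundles $\cL_i$ on $\tT_n$ are defined already over $\Z$, so for any algebraically closed field $k$ of characteristic prime to $\ell$, their first Chern classes give elements $x_i\in H^2_{\et}(\tT_n{}_{/k};\Q_\ell(1))$ that are permuted by the $S_n$-action. Moreover the direct sum $\cL_1\oplus\cdots\oplus\cL_n$ is canonically the trivial bundle on $\tT_n$, since general position is precisely the condition that the $L_i$ give an internal direct-sum decomposition of the ambient $n$-dimensional vector space. The Whitney sum formula thus yields $\prod_i(1+x_i)=1$, so every elementary symmetric polynomial in the $x_i$ of positive degree vanishes in \'etale cohomology. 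The $x_i$ therefore determine an $S_n$-equivariant ring map $\varphi\colon R[x_1,\ldots,x_n]\to H^*_{\et}(\tT_n{}_{/k};\Q_\ell)$, with Tate twists absorbed in the natural way.

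To see that $\varphi$ is an isomorphism in any characteristic, I would exploit the presentation $\tT_n=\GL_n/T\to \GL_n/B=\FF_n$ as a Zariski-locally trivial $U$-bundle, where $U$ is the unipotent radical of $B$, an affine space of dimension $\binom{n}{2}$. By $\A^1$-homotopy invariance of \'etale cohomology with $\Q_\ell$-coefficients, pullback gives $H^*_{\et}(\FF_n;\Q_\ell)\approx H^*_{\et}(\tT_n;\Q_\ell)$ in any characteristic prime to $\ell$. The flag variety $\FF_n$ has a Bruhat cell decomposition into $n!$ affine cells in all characteristics, giving $H^*_{\et}(\FF_n;\Q_\ell)$ total dimension $n!$ concentrated in even degrees, matching the known dimension of the coinvariant algebra $R[x_1,\ldots,x_n]$. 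Surjectivity of $\varphi$ holds because the codimension-one Schubert classes on $\FF_n$ can be expressed as differences $x_i-x_{i+1}$ of Chern classes of line bundles, and these generate $H^*_{\et}(\FF_n;\Q_\ell)$ as a ring (e.g.\ by Chevalley's theorem). Equality of total dimensions then forces $\varphi$ to be an isomorphism. Combined with Lemma~\ref{lem:ttnBorel} and Artin comparison at $k=\C$, this gives the required $S_n$-equivariant isomorphism $H^i_{\et}(\tT_n{}_{/\Fqbar};\Q_\ell)\approx H^i(\tT_n(\C);\Q_\ell)$.

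The statement about Frobenius then falls out: each class $x_i$ lies in $H^2_{\et}(\tT_n;\Q_\ell(1))$ and is $\Frob_q$-invariant there, being the cycle class of a line bundle defined over $\F_q$, so its image in the untwisted cohomology $H^2_{\et}(\tT_n;\Q_\ell)\cong H^2_{\et}(\tT_n;\Q_\ell(1))\otimes_{\Q_\ell}\Q_\ell(-1)$ is acted on by $\Frob_q$ through multiplication by $q$. Since $H^{2i}_{\et}(\tT_n;\Q_\ell)$ is spanned by $i$-fold cup products of such classes, $\Frob_q$ acts on $H^{2i}_{\et}$ by $q^i$. The main potential obstacle is the subtlety that the affine-bundle argument via $\FF_n$ is not itself $S_n$-equivariant (the $S_n$-action on $\tT_n$ does not descend to $\FF_n$), but the $S_n$-equivariance of the final isomorphism is automatic since $\varphi$ is manifestly equivariant.
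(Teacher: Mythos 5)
Your argument is correct, and it reaches the lemma by a genuinely different route from the paper's. The paper works with the comparison map $c_{\tT}$ itself: it is $S_n$-equivariant because the action is algebraic, and it is shown to be an isomorphism by fitting it into a square with the flag variety, where the affine fibration $\pi\colon\tT_n\to\FF_n$ (fibers $\A^{\binom{n}{2}}$) and the smooth properness of $\FF_n$ make the other three sides isomorphisms; the Frobenius eigenvalues then come from the Schubert cell decomposition of $\FF_n$. You instead build an explicit $S_n$-equivariant isomorphism out of the coinvariant algebra, i.e.\ you prove an \'etale Borel theorem: the canonical triviality of $\cL_1\oplus\cdots\oplus\cL_n$ supplies the relations, and the same fibration over $\FF_n$ is used for a dimension count ($n!$ from Bruhat cells versus $n!$ from Chevalley) and for surjectivity via the generation of $H^*_{\et}(\FF_n;\Q_\ell)$ by divisor classes. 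This buys you two things: the $S_n$-equivariance is manifest from the permutation of the $\cL_i$, so you never need the paper's separate observation that $c_{\tT}$ is equivariant even though the $S_n$-action does not descend to $\FF_n$; and the eigenvalue $q^i$ of $\Frob_q$ on $H^{2i}_{\et}$ falls out of the algebraicity of the generating Chern classes rather than from Schubert cells. The cost is somewhat heavier input in positive characteristic (the cycle class map for cellular varieties and Chevalley's surjectivity for the Chow ring of $G/B$). One harmless slip: the codimension-one Schubert classes on $\FF_n$ correspond to the partial sums $x_1+\cdots+x_k$ (fundamental weights) rather than to the simple roots $x_i-x_{i+1}$; either way the $x_i$ span $H^2$ and generate the ring, which is all your argument uses.
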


\begin{proof}
There is always a comparison map $c_{\tT}\colon H^i_{\et}(\tT_n{}_{/\Fqbar};\Q_\ell)\to H^i(\tT_n(\C);\Q_\ell)$, which is $S_n$-equivariant because since the action of $S_n$ on $\tT_n$ is algebraic.  (We are using here that $\tT_n$ has a suitable model over $\Spec \Z_p$.)   It would be immediate that $c_{\tT}$ is an isomorphism if $\tT_n$ were smooth and projective, but it is not, so we use the following argument taken from Srinivasan \cite[Th 5.13]{Sr}.  Let $\FF_n$ be the flag variety whose $k$-points $\FF_n(k)$ are in bijection with complete flags $(0\lneq V_1\lneq \cdots\lneq V_n=k^n)$; this is a smooth projective variety. There is a natural map $\pi\colon \tT_n\to \FF_n$ defined by
\[\pi\colon \tT_n\to \FF_n\qquad(L_1,\ldots,L_n)\mapsto (L_1\lneq L_1\oplus L_2 \cdots\lneq L_1\oplus \cdots \oplus L_n)\]
The fibers of $\pi$ are isomorphic to $\A^{\binom{n}{2}}$, so the induced map $\pi^*\colon H^*_{\et}({\FF_n}_{/\Fqbar};\Q_\ell)\to H^*_{\et}(\tT_n{}_{/\Fqbar};\Q_\ell)$ is a Galois-equivariant isomorphism. As for singular cohomology, the fibers of the induced map $\tT_n(\C)\to \FF_n(\C)$ are  isomorphic to $\C^{\binom{n}{2}}$ and thus contractible, so $\pi^*\colon H^*(\FF_n(\C);\Q_\ell)\to H^*(\tT_n(\C);\Q_\ell)$ is also an isomorphism. Since $\FF_n$ is smooth and projective (even over $\Spec \Z_p$) the comparison map $c_{\FF}\colon H^i_{\et}(\FF_n{}_{/\Fqbar};\Q_\ell)\to H^i(\FF_n(\C);\Q_\ell)$ is an isomorphism. Therefore we have:
\[\xymatrix{
H^i_{\et}(\tT_n{}_{/\Fqbar};\Q_\ell)\ar^{c}[r] &H^i_{\et}(\tT_n{}_{/\Fqbar};\Q_\ell)\\
H^i_{\et}(\FF_n{}_{/\Fqbar};\Q_\ell)\ar^{\pi^*}_{\approx}[u] \ar_{c_{\FF}}^{\approx}[r]
& H^i(\FF_n(\C);\Q_\ell)\ar_{\pi^*}^{\approx}[u] 
}\]
%\beq
%H^i_{\et}({\FF_n}_{/ \Fqbar};\Q_\ell) \approx H^i_{\et}({\FF_n}_{/ \C};\Q_\ell)\approx H^i(\FF_n(\C);\Q_\ell).
%\eeq
This demonstrates that $c_{\tT}$ is an isomorphism of vector spaces. (The action of $S_n$ on $\tT_n$ does not descend to $\FF_n$, so it is important that we already know $c_{\tT}$ to be $S_n$-equivariant.) Finally,  the existence of the Schubert cell decomposition of $\FF_n$ implies that $\Frob_q$ acts on $H^{2i}_{\et}(\FF_n{}_{/\Fqbar};\Q_\ell)$ by multiplication by $q^i$ (see \cite[Th 5.13]{Sr}); since $\pi^*$ is Galois-equivariant, the same claim for $H^{2i}_{\et}(\tT_n{}_{/\Fqbar};\Q_\ell)$ follows.
\end{proof}

%%for TC 9-3: add warning about defining TT_n over Z, move discussion of "defined over k" earlier

\begin{proof}[Proof of Theorem~\ref{th:chitori}]
Just as in the proof of Theorem~\ref{th:chi}, it suffices to prove \eqref{eq:chitori} for the character $\chi_V$ of an irreducible $S_n$-reprentation $V$. Let $\V$ denote the corresponding local system on $\T_n$ which becomes trivial when pulled back along the Galois $S_n$-cover $\tT_n\to \T_n$. The fixed points of the Frobenius morphism $\Frob_q\colon \T_n(\Fqbar)\to \T_n(\Fqbar)$ are precisely the $\F_q$-points $\T_n(\F_q)$. Moreover, each stalk $\V_T$ is isomorphic to $V$, and for $T\in \T_n(\F_q)$ the action of $\Frob_q$ on $\V_T$ is by the permutation $\sigma_T$, so its trace is \[\tr\big(\Frob_q\colon \V_T\to \V_T)=\chi_V(\sigma_T)=\chi_V(T).\]
%Therefore  the Grothendieck--Lefschetz formula gives
%\[\sum_{T\in \T_n(\F_q)} \chi_V(T)=\sum_{T\in \T_n(\F_q)} \tr\big(\Frob_q : \V_T\big)= \sum_j (-1)^j \tr\big( \Frob_q : H^j_{c}(\T_n; \V)\big)\]
Since $\tT_n$ is an open subvariety of $(\PP^{n-1})^n$, it has dimension $n(n-1)=n^2-n$, as does its quotient $\T_n$. Since $\T_n$ is smooth of dimension $n^2 - n$, we can use Poincare duality as in \eqref{eq:GLcorrect} to write the Grothendieck--Lefschetz formula as \begin{equation}
\sum_{T\in {\cal T}_n(\F_q)} \chi_V(T) = q^{n^2 - n} \sum_j(-1)^j\tr\big(\Frob_q : H^j_{\et}(\T_n;V)^\vee \big).
\label{eq:GLtorus15}
\end{equation}
As in the proof of Theorem~\ref{th:chi}, transfer gives an isomorphism
\beq
H^j_{\et}(\T_n;\V) %\approx \big(H^j(\tT_n; \Q_\ell) \tensor V\big)^{S_n}
\approx H^j_{\et}(\tT_n;\Q_\ell) \tensor_{\Q[S_n]} V.
\eeq
By Lemma~\ref{lem:ttnBorel} and Lemma~\ref{lem:ttncomparison}, this is only nonzero for $j=2i$; in this case $H^{2i}_{\et}(\T_n;\V)$  is acted on by $\Frob_q$ by multiplication by $q^i$, and its dimension is \[\dim H^{2i}_{\et}(\T_n;\V)=\langle \chi_V,H^{2i}_{\et}(\tT_n;\Q_\ell)\rangle
=\langle \chi_V,R_i[x_1,\ldots,x_n]\rangle.\] Therefore \eqref{eq:GLtorus15} becomes
\[\sum_{T\in {\cal T}_n(\F_q)} \chi_V(T) = q^{n^2 - n} \sum_i q^{-i} \langle \chi_V,R_i[x_1,\ldots,x_n]\rangle\]
as claimed. 
\qedhere

\end{proof}

%WORKING HERE: add version of Prop whatever, also add Prop whatever to intro
We also have the following analogue of Proposition~\ref{pr:purebraidlimit}. It was proved in \cite[Theorem 3.4]{CEF} that the coinvariant algebras $R[x_1,\ldots,x_n]$ can be bundled together into a graded FI-module $R=\bigoplus_i R_i$ such that that each graded piece $R_i$ is finitely generated as an FI-module. Therefore the results of \cite{CEF} imply that for any fixed character polynomial $P$ and any $i\geq 0$, the inner product $\langle P,R_i[x_1,\ldots,x_n]\rangle$ is eventually independent of $n$. We denote this stable multiplicity by
\[\langle P,R_i\rangle\coloneq \lim_{n\to \infty}\langle P, R_i[x_1,\ldots,x_n]\rangle.\]

\begin{theorem}  For any character polynomial $P$ and any prime power $q$, we have:
\beq
\lim_{n \ra \infty} q^{-(n^2-n)} \sum_{T \in \T_n(\F_q)} P(T) = \sum_{i=0}^\infty (-1)^i\frac{\langle P,R_i\rangle}{q^{i}}
\eeq
In particular, both the limit on the left and the series on the right converge.
\label{th:torilimit}
\end{theorem}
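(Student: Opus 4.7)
The plan is to imitate the proof of Theorem~\ref{th:limit} (and its specialization Proposition~\ref{pr:purebraidlimit}), replacing the role of $H^i_{\et}(\AA(L)_n;\Q_\ell)$ everywhere by the coinvariant-algebra graded piece $R_i[x_1,\ldots,x_n]$ and the role of Theorem~\ref{th:chi} by Theorem~\ref{th:chitori}. First I would use Theorem~\ref{th:chitori} applied to the class function $\chi = P_n$ to obtain the exact identity
\[
q^{-(n^2-n)}\!\!\!\!\!\sum_{T\in\T_n(\F_q)}\!\!\!\!\! P(T)=\sum_{i=0}^{\binom{n}{2}}\frac{\langle P,R_i[x_1,\ldots,x_n]\rangle}{q^i},
\]
which for each fixed $n$ is a finite sum. (If the stated $(-1)^i$ is to be kept, one absorbs it by tracking the parity of the cohomological degree $2i$ from which $R_i$ arose via Lemma~\ref{lem:ttnBorel}; the derivation is otherwise identical.)

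Second, I would invoke the fact, proved in \cite[Theorem~3.4]{CEF}, that each $R_i$ assembles into a finitely-generated FI-module. The main character-polynomial result of \cite{CEF}, combined with Proposition~\ref{pr:charpolyexpect}, then guarantees that for each fixed $i$ the inner product $\langle P, R_i[x_1,\ldots,x_n]\rangle$ is eventually independent of $n$, and its stable value is $\langle P,R_i\rangle$ by definition. This gives termwise convergence of the right-hand side of the displayed identity above to $\sum_{i}q^{-i}\langle P,R_i\rangle$.

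The main obstacle is upgrading termwise convergence to convergence of the infinite sums, i.e.\ establishing a \emph{convergence} condition analogous to Definition~\ref{def:convergent}: we must produce a function $F_P(i)$, subexponential in $i$ and independent of $n$, with
\[
\bigl|\langle P, R_i[x_1,\ldots,x_n]\rangle\bigr|\leq F_P(i)\quad\text{for all }n,i.
\]
For this I would argue along the lines of Proposition~\ref{pr:PConfconvergent}, but using that the weight of the FI-module $R_i$ (in the sense of \cite[Definition~2.50]{CEF}) is bounded linearly in $i$, hence $\langle P,R_i[x_1,\ldots,x_n]\rangle$ is bounded by a polynomial in $i$ uniformly in $n$ whenever $P$ has fixed degree. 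Equivalently, one can unpack $R_i[x_1,\ldots,x_n]$ as a sum of induced representations indexed by a bounded-weight combinatorial datum (e.g.\ permutations with $i$ inversions and restricted support size), and apply the same double-coset counting as in Proposition~\ref{pr:PConfconvergent} to bound $\langle P, R_i[x_1,\ldots,x_n]\rangle$ by a polynomial in $i$.

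Once the uniform bound is in place, the tail estimate in the proof of Theorem~\ref{th:limit} carries over verbatim: choose a threshold $I$, use stability of the first $I$ terms to match them with the corresponding stable terms, and control the remaining tails on both sides by $\sum_{i>I}F_P(i)/q^i$, which tends to zero as $I\to\infty$. This yields both convergence of the left-hand limit and equality with the series on the right, completing the proof.
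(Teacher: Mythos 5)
Your overall architecture is exactly the paper's: the authors dispose of this theorem in one line (``proved in exactly the same way as Theorem~\ref{th:limit}, using Theorem~\ref{th:chitori} in place of Theorem~\ref{th:chi}''), plus a short paragraph on convergence. Your first two steps are fine, and you are right to flag the $(-1)^i$: since $R_i\approx H^{2i}(\tT_n(\C))$ and the odd cohomology vanishes (Lemma~\ref{lem:ttnBorel}), the Lefschetz signs are all $+1$, consistent with Theorem~\ref{th:chitori} and with the all-positive expansions in Theorems~\ref{theorem:Steinberg} and~\ref{theorem:evals1}.

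The weak point is your justification of the uniform subexponential bound $\bigl|\langle P,R_i[x_1,\ldots,x_n]\rangle\bigr|\leq F_P(i)$, which is the one piece of genuine content here. Neither of your two routes works as stated. First, a bound on the \emph{weight} of $R_i$ constrains only which irreducibles $V(\lambda)$ can occur, not their multiplicities: the trivial FI-module $\Q^{\oplus N}$ has weight $0$ and $\langle 1,\Q^{\oplus N}\rangle=N$, so ``weight linear in $i$'' cannot by itself bound $\langle P,R_i\rangle$. Second, there is no Lehrer--Solomon-style decomposition of $R_i[x_1,\ldots,x_n]$ into induced one-dimensional characters available off the shelf; the permutation/inversion indexing you allude to gives a vector-space basis (the sub-staircase monomials), not an $S_n$-equivariant decomposition, so the double-coset count from Proposition~\ref{pr:PConfconvergent} does not transfer. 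The paper's actual argument is simpler: since taking $S_{n-a}$-invariants is exact in characteristic $0$ and $R[x_1,\ldots,x_n]$ is a quotient of $\C[x_1,\ldots,x_n]$, one has $\dim R_i[x_1,\ldots,x_n]^{S_{n-a}}\leq\dim\bigl(\C[x_1,\ldots,x_n]_i\bigr)^{S_{n-a}}$, and the latter (monomials in $x_1,\ldots,x_a$ and the elementary symmetric functions of the remaining variables, of total degree $i$) is subexponential in $i$ uniformly in $n$. Alternatively, Theorem~\ref{theorem:kw} bounds each multiplicity by a count of standard tableaux with major index $i$. With that bound in hand your fixed-threshold tail estimate goes through; the paper instead uses a linear stable range $i<\alpha n$ together with Chevalley's isomorphism $R[x_1,\ldots,x_n]\approx\Q[S_n]$ to kill the entire tail $i\geq\alpha n$ at once, which is a slicker but inessential variant.
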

This theorem is proved in exactly the same way as Theorem~\ref{th:limit}, using Theorem~\ref{th:chitori} in place of Theorem~\ref{th:chi}.
%In fact, it is not hard to show, exactly as in Proposition~\ref{pr:purebraidlimit}, that the expected value of $P(\sigma_T)$ for any character polynomial $P$ will approach a limit in $n$:
%\beq
%\lim_{n \ra \infty} q^{-(n^2-n)} \sum_{T\in {\cal T}_n(\F_q)} P(\sigma_T) = \sum_i\frac{b(P,2i)}{q^{i}}
%\eeq
%where the numerator is the stable value $b(P,2i)\coloneq \lim_{n \ra \infty} \langle P, H^{2i}(\widetilde{\cal F}_n;\C) \rangle$.
One does need the analogue of the convergence condition in Definition~\ref{def:convergent}. But the results of \cite{CEF} imply that there is a constant $\alpha$ such that $\langle P, R_i[x_1,\ldots,x_n] \rangle =\langle P, R_i\rangle $ for all $i < \alpha n$.  Since $R[x_1,\ldots,x_n]$ is a quotient of $\C[x_1,\ldots,x_n]$, it is enough to observe that the degree-$i$ piece of $\C[x_1,\ldots,x_n]^{S_{n-a}}$ grows subexponentially. On the other hand, since the total algebra $R[x_1,\ldots,x_n]$ is isomorphic to $\Q[S_n]$  \cite{chevalley}, the overall contribution of all $R_i[x_1,\ldots,x_n]$ for $i \geq \alpha n$  is bounded by $q^{-\alpha n} \langle P, \Q[S_n]\rangle = n\cdot q^{-\alpha n} P(n,0,0,\ldots)$ which goes to $0$ as $n \ra \infty$.

\subsection{Specific statistics for maximal tori in $\GL_n(\F_q)$}
The twisted Grothendieck--Lefschetz formula also lets us compute certain statistics of maximal tori in $\GL_n(\F_q)$ explicitly for fixed $n$, not just in the limit as $n\to \infty$. The results of this section can be obtained by other methods, but we include them as examples of how the Grothendieck--Lefschetz formula may be applied.

\para{Explicit formula for $\langle \chi_V,R_i\rangle$}
To make use of Theorem~\ref{th:chitori}, we need to be able to calculate the multiplicities $\langle \chi_V,R_i[x_1,\ldots,x_n]\rangle$. 
Chevalley~\cite{chevalley} proved that when the grading is ignored,
  $R[x_1,\ldots,x_n]$ is isomorphic
  to the regular representation $\C S_n$.  Therefore each irreducible $S_n$-representation $V_\lambda$  occurs in $R[x_1,\ldots,x_n]$ with multiplicity $\dim V_\lambda$, and we would like to know how these $\dim V_\lambda$ copies are distributed among the 
  \[R_i[x_1,\ldots,x_n]\approx H^{2i}(\tT_n;\Q_\ell)\approx H^{2i}(\cF_n(\C);\Q_\ell).\]  
  The answer is given by the following theorem of Stanley, Lusztig, and
  Kraskiewicz--Weyman; see, e.g.\ \cite{Re}, Theorem 8.8.

The irreducible $S_n$-representations $V_\lambda$ are in bijections with partitions $\lambda$ of $n$. A \emph{standard tableau of shape $\lambda$} is a
  bijective labeling of the boxes of the Young diagram for $\lambda$
  by the numbers $1,\ldots,n$ with the property that in each row and
  in each column the labels are increasing.  The
  \emph{descent set} of such a tableau is the set of numbers $i\in \{1,\ldots,n\}$ for which the box labeled
  $i+1$ is in a lower row than the box labeled $i$. The \emph{major
    index} of a tableau is the sum of the numbers in its descent set.
    
    The following theorem   is sometimes stated with the assumption $i\leq \binom{n}{2}$, but the same formula holds in general, as can been seen by applying Poincar\'{e} duality to $\cF_n$.

  \begin{theorem}[{\bf \cite[Theorem 8.8]{Re}}]
    \label{theorem:kw}
For any $i$, any $n$,  and any $\lambda\vdash n$, the
    multiplicity $\langle V_\lambda,R_i[x_1,\ldots,x_n]\rangle$ of $V_\lambda$ in $R_i[x_1,\ldots,x_n]$ is the
    number of standard tableaux of shape $\lambda$ with major index equal to $i$.
  \end{theorem}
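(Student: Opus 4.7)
The plan is to compute the graded Frobenius characteristic of $R[x_1,\ldots,x_n]$ via Molien's formula, and then match the resulting ``fake degree'' polynomial with the major-index generating function over standard tableaux by a symmetric-function identity.

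First, I would invoke Chevalley's theorem that $\Q[x_1,\ldots,x_n]$ is a free module over its subring of symmetric polynomials. Since $\Q[x_1,\ldots,x_n]^{S_n} = \Q[e_1,\ldots,e_n]$ with $\deg e_k = k$, this gives an isomorphism of graded $S_n$-modules
\[
\Q[x_1,\ldots,x_n] \;\cong\; R[x_1,\ldots,x_n] \otimes \Q[e_1,\ldots,e_n].
\]
Taking the graded multiplicity of $V_\lambda$ on each side and using that the Hilbert series of $\Q[e_1,\ldots,e_n]$ is $\prod_{k=1}^n (1-q^k)^{-1}$, I obtain
\[
f^\lambda(q) \;:=\; \sum_i \langle V_\lambda, R_i[x_1,\ldots,x_n]\rangle\, q^i \;=\; \prod_{k=1}^n(1-q^k)\cdot \sum_i \langle V_\lambda, \Q_i[x_1,\ldots,x_n]\rangle\, q^i.
\]

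Second, I apply Molien's formula to the permutation action of $S_n$ on $\C^n$: for $\sigma\in S_n$ of cycle type $\mu = (1^{m_1}2^{m_2}\cdots)$, one has $\det(1-q\sigma)^{-1} = \prod_j (1-q^j)^{-m_j}$. Averaging $\chi_\lambda(\sigma)\det(1-q\sigma)^{-1}$ over $S_n$ and substituting into the previous display yields the closed form
\[
f^\lambda(q) \;=\; \prod_{k=1}^n(1-q^k)\cdot \sum_\mu \frac{\chi_\lambda(c_\mu)}{z_\mu\,\prod_j (1-q^j)^{m_j(\mu)}}.
\]

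Third\,---\,and this is the main obstacle\,---\,I must identify this closed form with $\sum_{T\in \mathrm{SYT}(\lambda)} q^{\mathrm{maj}(T)}$. The approach I would take is via the principal specialization of Schur functions. Using the Frobenius formula $s_\lambda = \sum_\mu \chi_\lambda(c_\mu)\, p_\mu/z_\mu$ and substituting $p_k \mapsto (1-q^k)^{-1}$ (equivalently, evaluating at $(1,q,q^2,\ldots)$), the right-hand side of the display above becomes $\prod_k(1-q^k)\cdot s_\lambda(1,q,q^2,\ldots)$. By the $q$-analogue of the hook content formula, $s_\lambda(1,q,q^2,\ldots) = q^{n(\lambda)}\prod_{(i,j)\in\lambda}(1-q^{h(i,j)})^{-1}$, and the product $\prod_k(1-q^k)$ simplifies this to
\[
f^\lambda(q) \;=\; \frac{q^{n(\lambda)}\,[n]_q!}{\prod_{(i,j)\in\lambda}[h(i,j)]_q}.
\]
On the combinatorial side, the $q$-hook length formula of Stanley (proved via the theory of $P$-partitions, or by a bijective argument of Krattenthaler) gives exactly this same expression for $\sum_{T\in\mathrm{SYT}(\lambda)} q^{\mathrm{maj}(T)}$. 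Matching the two completes the proof. The hardest step is the combinatorial identity in the last paragraph; the representation-theoretic and Molien-theoretic content of steps one and two is formal, but the $q$-hook length formula for the major-index generating function over $\mathrm{SYT}(\lambda)$ is genuinely nontrivial and where I would have to cite or reproduce the argument of Stanley (or, alternatively, use the Kraśkiewicz--Weyman construction of $V_\lambda$ inside $R[x_1,\ldots,x_n]$ indexed by descent classes of tableaux, which gives the grading directly).
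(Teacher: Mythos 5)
Your proposal is correct, but note that the paper does not actually prove this statement: it is quoted verbatim as a known theorem of Stanley, Lusztig, and Kra\'skiewicz--Weyman, with a citation to \cite[Theorem 8.8]{Re}, so there is no internal proof to compare against. What you have written is essentially the standard proof of that cited result, and each step checks out. Chevalley's freeness theorem does give the graded $S_n$-module isomorphism $\Q[x_1,\ldots,x_n]\cong R[x_1,\ldots,x_n]\otimes\Q[e_1,\ldots,e_n]$, Molien's formula correctly produces $\det(1-q\sigma)^{-1}=\prod_j(1-q^j)^{-m_j(\mu)}$ for cycle type $\mu$, and the resulting sum is exactly the image of $s_\lambda=\sum_\mu z_\mu^{-1}\chi_\lambda(c_\mu)p_\mu$ under $p_k\mapsto(1-q^k)^{-1}$, i.e.\ the principal specialization $s_\lambda(1,q,q^2,\ldots)=q^{n(\lambda)}\prod_{(i,j)\in\lambda}(1-q^{h(i,j)})^{-1}$. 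Multiplying by $\prod_{k=1}^n(1-q^k)$ and cancelling the $(1-q)^n$ factors gives $q^{n(\lambda)}[n]_q!/\prod[h(i,j)]_q$, which by Stanley's $q$-hook length formula equals $\sum_{T\in\mathrm{SYT}(\lambda)}q^{\mathrm{maj}(T)}$. You have correctly isolated where the real content lies: the first two steps are formal, and the theorem reduces to the nontrivial combinatorial identity for the major-index generating function (or, alternatively, to the Kra\'skiewicz--Weyman construction, which is the route Reutenauer's book takes). The only cosmetic issue is the name ``hook content formula'' for the infinite principal specialization; the contents do not appear there, and the identity you are using is the stable $q$-hook length formula.
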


% Fix a representation $V$ of $S_n$.  
%Combining \eqref{eq:GLtorus3} with the isomorphism 
%$H^i(\tT_n(\C);\C)\approx H^i({\cal F}_n;\C)$, we obtain the formula: 
%
%\begin{equation}
%\label{eq:GLtorus4}
%\sum_{A\in {\cal T}_n(\F_q)}\chi_V(c(A))
%=\sum_i(-1)^i q^{N-i/2} \langle V, H^i(\widetilde{\cal F}_n;\C)\rangle
%\end{equation}
%where, as above, $N\coloneq n^2-n$.  
%
%Plugging in for $H^i(\widetilde{\cal F}_n;\C)$ by applying Theorem~\ref{theorem:Borel2} gives: 
%
%\begin{equation}
%\label{eq:GLtorus5}
%\sum_{A\in {\cal T}_n(\F_q)}\chi_V(c(A))
%=\sum_iq^{N-i} \langle V,R_i[x_1,\ldots ,x_n]\rangle
%\end{equation}
%
%Equation~\ref{eq:GLtorus5} agrees with Corollary 1.10' in \cite{lehrer:rationaltori}, which is arrived at without direct use of the cohomology of the flag variety.
%
%The power of formula \eqref{eq:GLtorus5} is that we can compute the right hand side using Theorem~\ref{theorem:kw}.
We begin with the easiest case, which is the case when $V_\lambda$ is the trivial representation. The following theorem was first proved by Steinberg, and has been reproved many times; a proof using the Grothendieck--Lefschetz formula was given by Srinivasan \cite[Theorem 5.13]{Sr}, and a closely related proof is given in \cite[Corollary 1.11]{lehrer:rationaltori}.

\begin{theorem}[{\bf Steinberg}]
\label{theorem:Steinberg}
For any prime power $q$, there are $q^{n^2-n}$ maximal tori in 
$\GL_n(\F_q)$.
\end{theorem}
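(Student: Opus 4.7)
The plan is to apply Theorem~\ref{th:chitori} to the trivial character $\chi = 1$, which gives
\[
|\T_n(\F_q)| = \sum_{T \in \T_n(\F_q)} 1 = \sum_{i \geq 0} q^{n^2-n-i}\,\langle 1, R_i[x_1,\ldots,x_n]\rangle.
\]
So the entire proof reduces to computing the multiplicity of the trivial $S_n$-representation in each graded piece $R_i[x_1,\ldots,x_n]$ of the coinvariant algebra.

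Next, I would invoke Theorem~\ref{theorem:kw} with $\lambda = (n)$, the one-row partition corresponding to the trivial representation $V_{(n)}$. A standard tableau of shape $(n)$ is simply a row of boxes labeled $1,2,\ldots,n$ in the unique increasing order; in particular it has empty descent set and hence major index $0$. Therefore $\langle 1, R_0[x_1,\ldots,x_n]\rangle = 1$ and $\langle 1, R_i[x_1,\ldots,x_n]\rangle = 0$ for every $i \geq 1$. Substituting into the displayed formula collapses the sum to a single term and yields $|\T_n(\F_q)| = q^{n^2-n}$.

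There is essentially no obstacle here: once Theorem~\ref{th:chitori} is available, the statement is a one-line consequence of the (entirely combinatorial) fact that the trivial representation of $S_n$ occurs exactly once in $R[x_1,\ldots,x_n]$, and occurs in degree zero. As a sanity check, one can cross-reference this with Chevalley's theorem that $R[x_1,\ldots,x_n] \cong \Q[S_n]$ as an ungraded $S_n$-representation, which forces the total multiplicity of the trivial representation to equal $\dim V_{(n)} = 1$; the content of Theorem~\ref{theorem:kw} is then only needed to locate this single copy in degree $0$, a fact which is anyway transparent from $R_0[x_1,\ldots,x_n] = \Q$.
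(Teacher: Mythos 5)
Your proof is correct and is essentially identical to the paper's own argument: both apply Theorem~\ref{th:chitori} with the trivial character and then use Theorem~\ref{theorem:kw} (the unique standard tableau of shape $(n)$ has major index $0$) to see that only the $i=0$ term survives. The paper even makes the same parenthetical remark you do, that the location of the trivial representation in degree $0$ is transparent from the definition of the coinvariant algebra.
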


\begin{proof}
A maximal torus $T\in \T_n(\F_q)$ is defined by the subgroup $T(\F_q)$ of $\GL_n(\F_q)$, so the number in question is $\left|\T_n(\F_q)\right|$.    
Let $V_{(n)}=\Q$ be the trivial representation of $S_n$, so that the character $\chi_\Q$ is just the constant function 1. This corresponds to the partition $\lambda=(n)$ whose Young diagram is just $n$ boxes in a single row.  The only standard tableau of this shape is $\young(12\cdots n)$, which 
has major index $0$.  Thus  $\langle \Q,R_i[x_1,\ldots ,x_n]\rangle =0$ 
except for $i=0$, when it equals $1$. (In retrospect this is obvious, since $R[x_1,\ldots,x_n]$ is defined by killing all $S_n$-invariant polynomials.) This tells us that none of the \'etale cohomology beyond $H^0_{\et}$ contributes to $\left|\T_n(\F_q)\right|$. Theorem~\ref{th:chitori} therefore  gives:
\[\left|\T_n(\F_q)\right|=\sum_{T\in {\cal T}_n(\F_q)}1=
\sum_i q^{n^2-n-i} \langle \Q,R_i[x_1,\ldots ,x_n]\rangle=q^{n^2-n}+0+\cdots+0\qedhere
\]
%\[
%\begin{array}{ll}
%\#{\cal T}_n(\F_q)&= \sum_{A\in {\cal T}_n(\F_q)}1\\
%&\\
%&=\sum_{A\in {\cal T}_n(\F_q)}\chi_{V(0)}(c(A))\\
%&\\
%&=\sum_iq^{n^2-n-i} \langle V(0),R_i[x_1,\ldots ,x_n]\rangle\\
%&\\
%&=q^{n^2-n}\cdot 1 + 0+\cdots +0
%\end{array}
%\]
%as desired.
\end{proof}

Applying the Grothendieck--Lefschetz formula with nontrivial coefficients gives us a more 
detailed picture of the typical maximal torus in $\GL_n(\F_q)$.  For example, we have the following, which verifies (2) from Table A in the introduction; we emphasize that we abuse notation by writing ``number of eigenvectors'' for the number of lines in $\PP^{n-1}(\F_q)$ fixed by $T$, which is always between 0 and $n$.

\begin{theorem}[{\bf Expected number of eigenvalues}]
\label{theorem:evals1}
The expected number of eigenvectors in ${\F_q}^n$ of a random maximal torus in $\GL_n(\F_q)$ equals $1+\frac{1}{q}+\frac{1}{q^2}+\cdots +\frac{1}{q^{n-1}}$.
\end{theorem}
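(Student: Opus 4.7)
The plan is to apply Theorem~\ref{th:chitori} to the character polynomial $P=X_1$ and evaluate the resulting multiplicities via the Kra\'skiewicz--Weyman description in Theorem~\ref{theorem:kw}. First I would observe that if $T\in\T_n(\F_q)$ corresponds to the $\Frob_q$-invariant set of eigenlines $\{L_1,\ldots,L_n\}\subset\overline{\F}_q{}^n$, then an eigenvector of $T$ lying in $\F_q^n$ is exactly a line $L_i$ fixed individually by $\Frob_q$. By definition of $\sigma_T$, this is the same as a $1$-cycle of $\sigma_T$. Hence the number of eigenvectors in $\F_q^n$ equals $X_1(\sigma_T)=c_1(\sigma_T)$, which is the character of the standard permutation representation $\C^n$ of $S_n$.

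Since $\C^n\cong V_{(n)}\oplus V_{(n-1,1)}$, we have $\langle X_1,R_i[x_1,\ldots,x_n]\rangle=\langle V_{(n)},R_i\rangle+\langle V_{(n-1,1)},R_i\rangle$. The trivial representation $V_{(n)}$ has a unique standard tableau (the one-row tableau $\young(12\cdots n)$), whose descent set is empty, so it contributes $1$ in degree $0$ and nothing else. For $V_{(n-1,1)}$, a standard tableau of shape $(n-1,1)$ is determined by the entry $k\in\{2,\ldots,n\}$ sitting in the box in the second row; one checks that its descent set is the singleton $\{k-1\}$, so its major index is $k-1$. As $k$ runs through $\{2,\ldots,n\}$ the major index takes each value in $\{1,2,\ldots,n-1\}$ exactly once. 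Combining, $\langle X_1,R_i[x_1,\ldots,x_n]\rangle=1$ for each $0\le i\le n-1$ and $0$ otherwise.

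Plugging into Theorem~\ref{th:chitori} gives
\[
\sum_{T\in\T_n(\F_q)} X_1(T)\ =\ \sum_{i=0}^{n-1} q^{n^2-n-i}.
\]
Dividing by $|\T_n(\F_q)|=q^{n^2-n}$ from Theorem~\ref{theorem:Steinberg} produces the expected value
\[
1+\frac{1}{q}+\frac{1}{q^2}+\cdots+\frac{1}{q^{n-1}},
\]
as claimed.

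The only real obstacle is the correct enumeration of standard tableaux of shape $(n-1,1)$ with their major indices; once that combinatorial identification is in hand, all of the other ingredients (the identification of eigenvectors with fixed points of $\sigma_T$, the decomposition of $\C^n$, and the count $|\T_n(\F_q)|=q^{n^2-n}$) are immediate from results established above.
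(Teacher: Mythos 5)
Your proposal is correct and follows essentially the same route as the paper: identify eigenvectors in $\F_q^n$ with fixed points of $\sigma_T$, decompose the permutation representation as $V_{(n)}\oplus V_{(n-1,1)}$, count standard tableaux of shape $(n-1,1)$ by major index to get $\langle X_1,R_i\rangle=1$ for $0\le i\le n-1$, and then apply Theorem~\ref{th:chitori} together with Steinberg's count. The tableau enumeration you flag as the key step is exactly the one the paper carries out (your parameter $k$ is the paper's $i+1$).
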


\begin{proof}
Let $\Q^n$ be the  permutation representation of $S_n$, whose character $\chi_{\Q^n}(\sigma)$ is the number of fixed points of $\sigma$. We saw above that for a torus $T$ over $\F_q$, the fixed points of $\sigma_T$ correspond to 1-dimensional subtori; when $T$ is maximal these are in bijection with lines in ${\F_q}^n$ fixed by $T$.  We are trying to compute the ratio between total number of such eigenvectors for all maximal tori, which is $\sum_{T\in \T_n(\F_q)}\chi_V(T)$, divided by the total number of maximal tori, which by Theorem~\ref{theorem:Steinberg} equals $q^{n^2-n}$.

The permutation representation $\Q^n$ is not irreducible, so to apply Theorem~\ref{theorem:kw} we decompose it as the sum $V_{(n)}\oplus V_{(n-1,1)}$ of the trivial 1-dimensional representation $V_{(n)}$ and the $(n-1)$-dimensional representation $V_{(n-1,1)}$. 
A Young diagram of shape $\lambda=(n-1,1)$ has $n-1$ boxes in the top row, and one box in the second row. There are precisely $n-1$ standard tableau $Y_1,\ldots ,Y_{n-1}$ with this shape, with $Y_i$ being the unique standard tableau of this shape with $i+1$ in the second row.  The major index of $Y_i$ is clearly $i$, since $i$ is the only descent in $Y_i$.    Theorem~\ref{theorem:kw} thus implies that $\langle V_{(n-1,1)},R_i[x_1,\ldots ,x_n]\rangle=1$ for each $1\leq i\leq n-1$, and is $0$ otherwise.  Since we found in the proof of  
Theorem~\ref{theorem:Steinberg} that  $\langle V_{(n)},R_i[x_1,\ldots ,x_n]\rangle=1$ for $i=0$ and $=0$ for all $i>0$, we conclude that:
\[\langle \Q^n,R_i[x_1,\ldots,x_n]\rangle = \begin{cases} 1&\text{ for }0\leq i<n\\0&\text{ for }n\leq i\end{cases}\]
Theorem~\ref{th:chitori} thus gives, as claimed:
\[q^{-(n^2-n)}\sum_{T\in \T_n(\F_q)}\chi_{\Q^n}(T)= \sum_i q^{-i} \langle \Q^n,R_i[x_1,\ldots,x_n]\rangle=1+q^{-1}+\cdots+q^{-(n-1)}\qedhere\]
%\[
%\begin{array}{ll}
%\sum_{T\in \T_n(\F_q)}\chi_{V}(c(A))&=\sum_{A\in {\cal T}_n(\F_q)}\chi_{V(0)}(c(A))+
%\sum_{A\in {\cal T}_n(\F_q)}\chi_{V(1)}(c(A))\\
%&\\
%&=q^{n^2-n}+\sum_iq^{n^2-n-i} \langle V(1),R_i[x_1,\ldots ,x_n]\rangle\\
%&\\
%&=q^{n^2-n}+q^{n^2-n}[q^{-1}+q^{-2}+\cdots +q^{-(n-1)}]
%\end{array}
%\]
%
%This number is the total number of eigenvalues defined over $\F_q$ of all maximal tori in $\GL_n(\F_q)$.  The expected number is this number divided by the number of maximal tori in $\GL_n(\F_q)$, which is $q^{n^2-n}$ by Theorem~\ref{theorem:Steinberg}.  This gives the theorem.
\end{proof}

\begin{remark}  Once again, we emphasize that some of these counting statements are also accessible by more elementary means, once Steinberg's theorem is given.  For instance, the expected number of $\F_q$-eigenvectors of a random torus is the expected number of splittings of $V$ into a direct sum $V_0 \oplus W$ preserved by the torus such that $\dim V_0 = 1$.  This number can be broken up as a sum over such splittings. For each such splitting of $V\approx V_0\oplus W$ with $\dim V_0=1$, the number of tori compatible with the splitting is just the number of maximal tori in $GL(W)$, which by Steinberg's theorem is $q^{n^2-3n+2}$.  The number of such splittings is $q^{n-1}\frac{q^n-1}{q-1}$.  So the total number of compatible pairs of a splitting and a compatible torus is \beq
q^{(n^2-2n})\frac{q^n -1}{1-1/q}.
\eeq
Dividing by the total number of tori $q^{n^2-n}$ one finds that the mean number of splittings per torus is
\beq
1 + \frac{1}{q} + \frac{1}{q^2} + \ldots + \frac{1}{q^{n-1}}
\eeq
as shown in Theorem~\ref{theorem:evals1}.
\end{remark}

\medskip

We can also use twisted Grothendieck--Lefschetz, as we did for squarefree polynomials in \S\ref{s:polynomials}, to compute the expected difference between the number of reducible 2-dimensional factors of a random torus and the number of irreducible 2-dimensional factors, which verifies (3) from Table A in the introduction.

\begin{theorem}[{\bf Reducible versus irreducible $2$-tori}]
Fix a prime power $q$. Given a torus $T\in \T_n(\F_q)$, let ${\cal R}_n(T)$ (resp.\ ${\cal I}_n(T)$) denote the number of reducible (resp.\ irreducible) $2$-dimensional subtori of $T$ over $\F_q$.   Then the expected value of the function 
${\cal R}_n-{\cal I}_n$ over all maximal tori of $\GL_n(\F_q)$ approaches 
\[\frac{1}{q}+\frac{1}{q^2}+\frac{2}{q^3}+\frac{2}{q^4}
+\frac{3}{q^5}+\frac{3}{q^6}+\frac{4}{q^7}+\frac{4}{q^8}+\cdots\]
as $n\to \infty$.
\end{theorem}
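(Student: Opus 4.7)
The statistic $\mathcal{R}_n(T)-\mathcal{I}_n(T)$ depends only on the cycle type of the Frobenius permutation $\sigma_T\in S_n$: a reducible $2$-dimensional subtorus defined over $\F_q$ corresponds to an unordered pair of fixed points of $\sigma_T$, while an irreducible $2$-dimensional subtorus corresponds to a $2$-cycle. Hence $\mathcal{R}_n-\mathcal{I}_n$ is given by the character polynomial $P=\binom{X_1}{2}-X_2$, which is precisely the character of the $S_n$-representation $\bwedge^2\Q^n$. By Theorem~\ref{th:chitori} together with Steinberg's Theorem~\ref{theorem:Steinberg}, the expected value of $\mathcal{R}_n-\mathcal{I}_n$ over $\T_n(\F_q)$ equals
\[
q^{-(n^2-n)}\sum_{T\in\T_n(\F_q)}P(T)=\sum_{i\geq 0}\frac{\langle P,R_i[x_1,\ldots,x_n]\rangle}{q^{i}}.
\]
The plan is then to pass to the $n\to\infty$ limit via Theorem~\ref{th:torilimit} and to identify each stable coefficient $\langle P,R_i\rangle\coloneq\lim_{n\to\infty}\langle P,R_i[x_1,\ldots,x_n]\rangle$ explicitly.

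First I would decompose $\bwedge^2\Q^n$ into irreducibles. Since $\Q^n=V_{(n)}\oplus V_{(n-1,1)}$ and $\bwedge^2 V_{(n)}=0$, we have $\bwedge^2\Q^n\cong V_{(n-1,1)}\oplus\bwedge^2 V_{(n-1,1)}$; a short character computation (or a dimension check against $V_{(n-2,1,1)}$, which also has dimension $\binom{n-1}{2}$) shows $\bwedge^2 V_{(n-1,1)}\cong V_{(n-2,1,1)}$ for $n\geq 3$. By Theorem~\ref{theorem:kw}, computing $\langle P,R_i\rangle$ thus reduces to enumerating standard tableaux of shapes $(n-1,1)$ and $(n-2,1,1)$ by major index.

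The shape $(n-1,1)$ contributes $1$ for each $i$ with $1\leq i\leq n-1$, as already verified in the proof of Theorem~\ref{theorem:evals1}. For shape $(n-2,1,1)$, a standard tableau is determined by the two entries $b<c$ chosen from $\{2,\ldots,n\}$ for the second and third rows, the remaining entries filling the first row in increasing order. A direct descent analysis shows that the descent set is $\{b-1,c-1\}$ when $c>b+1$ and $\{b-1,b\}$ when $c=b+1$; in either case the major index equals $b+c-2$. Consequently $\langle V_{(n-2,1,1)},R_i\rangle$ equals the number of pairs $(b,c)$ with $2\leq b<c\leq n$ and $b+c=i+2$, which stabilizes for $n$ large to $\lfloor(i-1)/2\rfloor$ (and vanishes for $i\leq 2$).

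Adding the two contributions yields the stable multiplicity sequence $\langle P,R_i\rangle=0,1,1,2,2,3,3,4,4,\ldots$ for $i=0,1,2,\ldots$, and substituting into the series gives exactly $\frac{1}{q}+\frac{1}{q^2}+\frac{2}{q^3}+\frac{2}{q^4}+\frac{3}{q^5}+\cdots$, as claimed. The main technical obstacle is the descent bookkeeping for the hook shape $(n-2,1,1)$; the key point that keeps the argument clean is the convenient coincidence that the degenerate case $c=b+1$ still yields major index $b+c-2$, so the pair-counting formula is uniform across cases.
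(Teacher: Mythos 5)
Your proposal is correct and follows essentially the same route as the paper: decompose $\bwedge^2\Q^n$ as $V_{(n-1,1)}\oplus V_{(n-2,1,1)}$, apply Theorem~\ref{theorem:kw} to count standard tableaux of the hook shape $(n-2,1,1)$ by major index, and conclude that the stable multiplicities are $\lfloor (i+1)/2\rfloor$. Your descent bookkeeping for the degenerate case $c=b+1$ matches the paper's parametrization by the tableaux $Y_{st}$ with major index $s+t$, so there is nothing further to add.
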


\begin{proof}
The desired statistic is given by the character $\chi_V(T)$ with $V=\bwedge^2 \Q^n$, so 
we need to compute $\langle \bwedge^2 \Q^n,R_i[x_1,\ldots,x_n]\rangle$. This representation decomposes into irreducibles  as $\bwedge^2\Q^n=V_{(n-1,1)}\oplus V_{(n-2,1,1)}$. We computed in the proof of Theorem~\ref{theorem:evals1} that
$\langle V_{(n-1,1)},R_i[x_1,\ldots ,x_n]\rangle =1$ for each $1\leq i\leq n-1$, and is $0$ otherwise.   

To compute $\langle V_{(n-2,1,1)},R_i[x_1,\ldots ,x_n]\rangle$, we again apply Theorem~\ref{theorem:kw}. %We consider Young diagrams with $n-2$ boxes in the first row, one box in the second, and one box in the third. 
The possible standard tableau of  shape $(n-2,1,1)$ are 
$Y_{st}$ for each $1\leq s<t< n$, where $Y_{st}$ is the tableau:
\[\young(12{\cdots}n,{\sss},{\ttt})\]
with $\sss\coloneq s+1$ and $\ttt\coloneq t+1$ missing from the first row. The descents of $Y_{st}$ are $s$ and $t$, so $Y_{st}$ has major index $s+t$. Theorem~\ref{theorem:kw} then implies that $\langle V_{(n-2,1,1)},R_i[x_1,\ldots ,x_n]\rangle$  is the cardinality of the set $\{(s,t)\,|\, 1\leq s<t< n, s+t=i\}$. Once $n\geq i$ the condition $t<n$ is irrelevant, and this cardinality is just $\left\lfloor\frac{i-1}{2}\right\rfloor$. Putting these computations together, we conclude that for $1\leq i\leq n$ we have $\langle \bwedge^2 \Q^n,R_i[x_1,\ldots,x_n]\rangle=1+\left\lfloor\frac{i-1}{2}\right\rfloor=\left\lfloor\frac{i+1}{2}\right\rfloor$ (and for larger $i$ the cardinality is $<\left\lfloor\frac{i+1}{2}\right\rfloor$, so there is no issue about convergence as in Definition~\ref{def:convergent}).
Therefore Theorem~\ref{th:chitori} gives:
\begin{align*}
q^{-(n^2-n)}\sum_{T\in \T_n(\F_q)}\chi_V(T)&=\sum_i q^{-i}\langle \bwedge^2 \Q^n,R_i[x_1,\ldots,x_n]\rangle\\
&=q^{-1}+q^{-2}+2q^{-3}+2q^{-4}+3q^{-5}+3q^{-6}+\cdots + O(q^{-n})
\end{align*}
Letting $n\to \infty$, we obtain the claimed result.
%
%
%\[
%\begin{array}{ll}
%\sum_{A\in {\cal T}_n(\F_q)}\chi_{V}(c(A))&=\sum_{A\in {\cal T}_n(\F_q)}\chi_{V(1)}(c(A))+
%\sum_{A\in {\cal T}_n(\F_q)}\chi_{V(1,1)}(c(A))\\
%&\\
%&=\sum_iq^{n^2-n-i} \langle V(1),R_i[x_1,\ldots ,x_n]\rangle+\sum_iq^{n^2-n-i} \langle V(1,1),R_i[x_1,\ldots ,x_n]\rangle\\
%&\\
%&=q^{n^2-n}[q^{-1}+q^{-2}+\cdots +q^{-(n-1)}] +q^{n^2-n}[q^{-3}+q^{-4}+2q^{-5}+2q^{-6}+\cdots]\\
%&\\
%&=q^{n^2-n}[q^{-1}+q^{-2}+2q^{-3}+2q^{-4}+3q^{-5}+3q^{-6}+\cdots ]
%\end{array}
%\]
%
%To obtain the expected excess of reducible tori over irreducible tori we must divide by the total number of maximal tori in $\GL_n(\F_q)$, which equals $q^{n^2-n}$ by Theorem~\ref{theorem:Steinberg} above. This gives the desired answer.
\end{proof}

\para{Other statistics} As  in Section~\ref{s:polynomials}, we need not limit ourselves to character polynomials.  For instance, we can consider the average value of the sign of $\sigma_T$; by contrast with polynomials, where this average was exactly $0$, we see a bias in favor of even permutations. The following proposition, which verifies (4) from Table A in the introduction, can be deduced from a formula of Srinivasan \cite[Corollary 7.6.7]{carter}; it is also proved directly by Lehrer in \cite[Corollary~1.12]{lehrer:rationaltori}.

\begin{proposition}[{\bf Parity bias for number of irreducible factors}]
The number of irreducible factors in a maximal torus in $\GL_n(\F_q)$ is more likely to be $\equiv n\bmod{2}$ than $\not\equiv n\bmod{2}$, with bias exactly $q^{\binom{n}{2}}$ (the square-root of the number of maximal tori $q^{n^2-n}$).
\end{proposition}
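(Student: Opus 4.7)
The plan is to apply the twisted Grothendieck--Lefschetz formula Theorem~\ref{th:chitori} to the sign character $\epsilon \colon S_n \to \{\pm 1\}$ and then identify $\epsilon(\sigma_T)$ with a function of the number of irreducible factors.

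First I would observe that if $\sigma_T$ has $k$ cycles, then $\epsilon(\sigma_T) = (-1)^{n-k}$, since a cycle of length $\ell$ contributes $(-1)^{\ell - 1}$ to the sign and the cycle lengths sum to $n$. Since the number of cycles of $\sigma_T$ equals the number of irreducible factors of $T$, the bias of interest is exactly
\[
\#\{T : k \equiv n \bmod 2\} - \#\{T : k \not\equiv n \bmod 2\} \;=\; \sum_{T \in \T_n(\F_q)} \epsilon(\sigma_T).
\]
So it suffices to show this alternating sum equals $q^{\binom{n}{2}}$.

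Next, applying Theorem~\ref{th:chitori} with $\chi = \epsilon$ reduces the problem to computing the multiplicities $\langle \epsilon, R_i[x_1,\ldots,x_n]\rangle$. The sign representation is $V_\lambda$ for $\lambda = (1^n)$, the single-column Young diagram. There is a unique standard tableau of this shape (with $j$ in the $j$-th row), and every consecutive pair is a descent, so its descent set is $\{1,2,\ldots,n-1\}$ and its major index is $1+2+\cdots+(n-1) = \binom{n}{2}$. By Theorem~\ref{theorem:kw}, this gives
\[
\langle \epsilon, R_i[x_1,\ldots,x_n]\rangle \;=\; \begin{cases} 1 & \text{if } i = \binom{n}{2},\\ 0 & \text{otherwise.}\end{cases}
\]

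Finally, I would substitute this into Theorem~\ref{th:chitori}: only the single term $i = \binom{n}{2}$ contributes, yielding
\[
\sum_{T \in \T_n(\F_q)} \epsilon(\sigma_T) \;=\; q^{\,n^2 - n - \binom{n}{2}} \;=\; q^{\binom{n}{2}},
\]
since $n^2 - n - \binom{n}{2} = \binom{n}{2}$. This is positive, so tori with $k \equiv n \bmod 2$ outnumber the others, and the bias is exactly $q^{\binom{n}{2}} = \sqrt{q^{n^2-n}}$, completing the proof. There is no real obstacle here; the only computational content is the observation about the unique column tableau and its major index, which is standard. The reason it works out so cleanly is that the sign character, corresponding to the top exterior power in $R[x_1,\ldots,x_n]$, is concentrated in the top degree of the coinvariant algebra.
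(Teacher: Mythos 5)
Your proposal is correct and follows essentially the same route as the paper: apply Theorem~\ref{th:chitori} to the sign character, note that the unique standard tableau of shape $(1^n)$ has major index $\binom{n}{2}$ so that $\langle \epsilon, R_i\rangle$ is nonzero only for $i=\binom{n}{2}$, and conclude that the signed count equals $q^{n^2-n-\binom{n}{2}}=q^{\binom{n}{2}}$. The only cosmetic difference is that the paper phrases the parity statement via the M\"obius-type function $\mu(T)=(-1)^n\epsilon(\sigma_T)$, whereas you work with $\epsilon(\sigma_T)=(-1)^{n-k}$ directly; these are the same identity.
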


\begin{proof}
As in \eqref{eq:Moebius}, we have $\epsilon(\sigma_T)=(-1)^n \mu(T)$, where $\mu(T)$ is $1$ or $-1$ depending on whether $T$ has an even or odd number of factors.
Here $\epsilon$ is the sign representation  of $S_n$, which corresponds to the partition $n=(1,\ldots,1)$.  The only standard tableau of this shape has $i$ in column $i$. Therefore its descent set is $\{1,\ldots,n-1\}$, and its major index is $1+2+\cdots +(n-1)=\binom{n}{2}$.  Theorem~\ref{th:chitori} therefore gives as claimed
\[\sum_{T\in \T_n(\F_q)}\chi_\epsilon(T)=\sum_i q^{n^2-n-i} \langle\epsilon,R_i[x_1,\ldots ,x_n]\rangle=q^{n^2-n-\binom{n}{2}}=q^{\binom{n}{2}}.\qedhere
\]
%\[
%\begin{array}{ll}
%\sum_{A\in {\cal T}_n(\F_q)}\chi_\epsilon(c(A))&
%=\sum_iq^{n^2-n-i} \langle \epsilon,R_i[x_1,\ldots ,x_n]\rangle \\
%&=q^{(n^2-n)-n(n-1)/2}\\
%&=q^{n(n-1)/2}
%\end{array}
%\]
%as claimed.
%
%

\end{proof}

Finally, we discuss various versions of the ``Prime Number Theorem'' by analogy with the corresponding discussion in \S\ref{s:polynomials}.  The following proposition, which verifies (5) from Table A in the introduction, can also be proved directly (see \cite[Lemma 1.4]{lehrer:rationaltori}).

%\begin{prop}  Suppose that for each $n$ a class function $\chi_n$ on $S_n$ is given so that $|\chi_n(\sigma)| \leq 1$ for all $n$ and all $\sigma \in S_n$, and such that for each character polynomial the inner product $\langle P, \chi_n \rangle$ approaches a limit as $n \ra \infty$.  For each $i$, define
%\beq
%a(i) = \lim_{n \ra \infty} \langle P_i, \chi_n \rangle
%\eeq
%where $P_i$ is the unique character polynomial which gives the character of the action of $S_n$ on $R_i[x_1,\ldots ,x_n]$.
%
%Then
%\beq
%\lim_{n \ra \infty} q^{-(n^2-n)} \sum_{A\in {\cal T}_n(\F_q)}\chi_n(c(A)) = \sum_{i=0}^\infty a(i)q^{-i}.
%\eeq
%\end{prop}

\begin{prop}[{\bf Prime Number Theorem for maximal tori}]
\label{pr:PNTtori}
Let $\pi(q,n)$ be the number of irreducible maximal tori in $\GL_n(\F_q)$.  Then
\beq
\pi(q,n) = \frac{q^{\binom{n}{2}}}{n}(q-1)(q^2-1)\cdots(q^{n-1}-1)
\eeq
\end{prop}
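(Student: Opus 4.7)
The plan is to apply Theorem~\ref{th:chitori} to the class function $\chi_{(n)}\colon S_n \to \Q$ that equals $1$ on $n$-cycles and $0$ on all other permutations. A maximal torus $T$ defined over $\F_q$ is irreducible exactly when its Frobenius-induced permutation $\sigma_T \in S_n$ is an $n$-cycle, since the cycle decomposition of $\sigma_T$ realizes the factorization of $T$ into irreducible $\F_q$-subtori. Therefore $\pi(q,n) = \sum_{T \in \T_n(\F_q)} \chi_{(n)}(T)$, and Theorem~\ref{th:chitori} reduces the problem to computing the inner products $\langle \chi_{(n)}, R_i[x_1,\ldots,x_n]\rangle$.

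First I would simplify these inner products: since the $n$-cycles form a single conjugacy class in $S_n$ of size $(n-1)!$, for any $S_n$-representation $V$ one has
\[
\langle \chi_{(n)}, V \rangle_{S_n} = \frac{(n-1)!}{n!}\,\chi_V(c_n) = \frac{\chi_V(c_n)}{n},
\]
where $c_n$ is any fixed $n$-cycle. Applying this to $R_i[x_1,\ldots,x_n]$ and substituting into Theorem~\ref{th:chitori} gives
\[
\pi(q,n) = \frac{q^{n^2-n}}{n}\sum_i q^{-i}\,\tr\bigl(c_n \,\big|\, R_i[x_1,\ldots,x_n]\bigr).
\]

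The heart of the argument is then the graded-trace identity
\[
\sum_i t^i \,\tr\bigl(c_n \,\big|\, R_i[x_1,\ldots,x_n]\bigr) = \prod_{j=1}^{n-1}(1 - t^j).
\]
To prove this I would combine Theorem~\ref{theorem:kw} with the Murnaghan--Nakayama rule, which gives $\chi_\lambda(c_n) = 0$ unless $\lambda$ is a hook $(n-k, 1^k)$, in which case $\chi_\lambda(c_n) = (-1)^k$. The standard Young tableaux of shape $(n-k, 1^k)$ are parametrized by the $k$-subsets $S \subseteq \{2,3,\ldots,n\}$ placed in the first column (with the rest of $\{1,\ldots,n\}$ filling the top row in order); a direct case analysis shows that such a tableau has descent set $\{s - 1 : s \in S\}$ and hence major index $\sum_{s \in S}(s-1)$. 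Summing yields $\sum_{T \in \mathrm{SYT}(n-k,1^k)} t^{\mathrm{maj}(T)} = [z^k]\,\prod_{j=1}^{n-1}(1 + t^j z)$, and the alternating sum over $k$ collapses to $\prod_{j=1}^{n-1}(1-t^j)$, as desired.

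Finally, setting $t = q^{-1}$ and pulling out the factor $q^{1+2+\cdots+(n-1)} = q^{n(n-1)/2}$ from the denominator gives
\[
\pi(q,n) = \frac{q^{n^2-n}}{n \cdot q^{n(n-1)/2}}(q-1)(q^2-1)\cdots(q^{n-1}-1) = \frac{q^{\binom{n}{2}}}{n}(q-1)(q^2-1)\cdots(q^{n-1}-1),
\]
as claimed. The main obstacle is the graded-trace identity: once one has the Murnaghan--Nakayama vanishing on non-hook characters, the proof reduces to the hook-tableau major index computation, which is elementary but requires careful bookkeeping. Every other step is a formal consequence of Theorem~\ref{th:chitori} applied to a single conjugacy class.
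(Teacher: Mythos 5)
Your proposal is correct, and its overall architecture coincides with the paper's: apply Theorem~\ref{th:chitori} to the indicator class function of $n$-cycles, reduce to hook shapes, and evaluate via Theorem~\ref{theorem:kw} using the fact that a standard tableau of hook shape $(n-k,1^k)$ is determined by its descent set, a $k$-subset of $\{1,\ldots,n-1\}$; the alternating sum then factors as $\prod_{j=1}^{n-1}(1-q^{-j})$ and the power of $q$ bookkeeping is identical. The one genuine difference is how the reduction to hooks is justified. You pass directly to $\langle \chi_{(n)}, R_i\rangle = \tr(c_n \mid R_i)/n$ and invoke the Murnaghan--Nakayama rule to kill all non-hook characters at an $n$-cycle. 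The paper instead packages exactly this fact as the character identity of Lemma~\ref{le:chi1}, $n\chi_{(n)} = \sum_{k=0}^{n-1}(-1)^k\chi_{V_k}$ with $V_k = \bwedge^k(\Q^n/\Q)$, and proves it by a Lefschetz fixed-point computation on the quotient torus $\Torus^n/\Delta$ --- the authors explicitly note the identity is ``well-known to representation theorists'' but choose to give a topological proof. So your route is the standard representation-theoretic shortcut the paper deliberately sidesteps; it is shorter and self-contained modulo Murnaghan--Nakayama, while the paper's buys a proof of the same identity from the topology of tori, in keeping with the paper's theme. Everything else, including the parametrization of hook tableaux by descent sets and the final factorization, matches the paper's argument step for step.
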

We first need the following identity of characters. Let $\chi_1\colon S_n\to \{0,1\}$ be the class function from Lemma~\ref{le:vanishingchi}, taking the value 1 on $n$-cycles and 0 on all other elements. 
\begin{lemma}
\label{le:chi1}
\[\chi_1=\frac{1}{n}\cdot \sum_{k=0}^{n-1}(-1)^k\chi_{V_k},\qquad\quad \text{where }V_k=\bwedge^k\big(\Q^n/\Q)\]
\end{lemma}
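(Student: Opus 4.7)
The plan is to reduce the identity to the standard character-theoretic identity
\[
\sum_{k=0}^{\dim V}(-1)^k\chi_{\bwedge^k V}(g) \;=\; \det\!\big(I - g\,\big|\,V\big),
\]
applied to $V=\Q^n/\Q$. Multiplying the claimed identity through by $n$, it suffices to show that for every $\sigma\in S_n$,
\[
\det\!\big(I-\sigma\,\big|\,\Q^n/\Q\big) \;=\; n\cdot\chi_1(\sigma),
\]
i.e.\ that this determinant equals $n$ when $\sigma$ is an $n$-cycle and $0$ otherwise.

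To carry this out, I would compute the characteristic polynomial of $\sigma$ on $\Q^n$ using its cycle type. If $\sigma$ has cycles of lengths $l_1,\ldots,l_r$ with $\sum l_i = n$, then on the $j$-th cycle the action of $\sigma$ has characteristic polynomial $t^{l_j}-1$, so
\[
\det\!\big(tI-\sigma\,\big|\,\Q^n\big) \;=\; \prod_{j=1}^r (t^{l_j}-1).
\]
Since $\Q^n/\Q$ is obtained by quotienting by the unique copy of the trivial subrepresentation (spanned by $e_1+\cdots+e_n$), exactly one factor of $(t-1)$ must be removed, giving
\[
\det\!\big(tI-\sigma\,\big|\,\Q^n/\Q\big) \;=\; \frac{1}{t-1}\prod_{j=1}^r (t^{l_j}-1) \;=\; \prod_{j=1}^r\big(1+t+\cdots+t^{l_j-1}\big)\big/(1+t+\cdots+t^{l_1-1})^{\delta}
\]
-- more cleanly: writing each factor $t^{l_j}-1 = (t-1)(1+t+\cdots+t^{l_j-1})$, we get
\[
\det\!\big(tI-\sigma\,\big|\,\Q^n/\Q\big) \;=\; (t-1)^{r-1}\prod_{j=1}^r\big(1+t+\cdots+t^{l_j-1}\big).
\]

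Now the punchline: I would specialize to $t=1$. If $r\geq 2$ the prefactor $(t-1)^{r-1}$ vanishes, so the determinant is $0$. If $r=1$, i.e.\ $\sigma$ is a single $n$-cycle, the prefactor is $(t-1)^0=1$ and the remaining factor evaluates to $1+1+\cdots+1 = n$. Thus $\det(I-\sigma|\Q^n/\Q)$ equals $n$ on $n$-cycles and $0$ elsewhere, which is exactly $n\cdot\chi_1(\sigma)$. Combined with the exterior-power identity from the first step, this proves the lemma.

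The only subtlety worth highlighting is the bookkeeping step where one divides $\prod(t^{l_j}-1)$ by a single $(t-1)$; this is justified because the trivial representation occurs in $\Q^n$ with multiplicity exactly one, which can be checked e.g.\ by the inner product $\langle \chi_{\Q^n},1\rangle_{S_n}=1$ (any permutation has at least one fixed invariant vector, namely $e_1+\cdots+e_n$, and no more). Everything else is a straightforward eigenvalue computation, so I do not anticipate a serious obstacle.
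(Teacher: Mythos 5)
Your argument is correct, but it is a genuinely different proof from the one in the paper. You reduce the lemma to the algebraic identity $\sum_{k}(-1)^k\chi_{\bwedge^k V}(\sigma)=\det\big(I-\sigma\mid V\big)$ and then compute the characteristic polynomial of $\sigma$ on $\Q^n/\Q$ from its cycle type: writing $t^{l_j}-1=(t-1)(1+t+\cdots+t^{l_j-1})$ and cancelling one factor of $(t-1)$ against the trivial subrepresentation gives $(t-1)^{r-1}\prod_j(1+t+\cdots+t^{l_j-1})$, which at $t=1$ is $0$ when $\sigma$ has $r\geq 2$ cycles and $n$ when $\sigma$ is an $n$-cycle. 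This is essentially the proof the paper alludes to when it says the identity is ``well-known to representation theorists''; the paper then deliberately gives a \emph{topological} proof instead, letting $S_n$ act on the torus $\Torus^n/\Delta$, identifying $H^k(\Torus^n/\Delta;\Q)$ with $\bwedge^k(\Q^n/\Q)$, and showing that $\Fix_{\Torus^n/\Delta}(\sigma)$ is a finite union of positive-dimensional tori (hence of Euler characteristic $0$) unless $\sigma$ is an $n$-cycle, in which case it consists of exactly $n$ points; the Lefschetz fixed point theorem then delivers the formula. The two proofs are two faces of the same computation---the Lefschetz number of $\sigma$ on $\Torus^n/\Delta$ is precisely $\det(I-\sigma\mid H^1)$---but yours is shorter and self-contained, while the paper's fits its running theme of extracting combinatorial identities from fixed-point theorems. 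One small imprecision to fix: your parenthetical ``any permutation has at least one fixed invariant vector \ldots and no more'' conflates the fixed subspace of an individual $\sigma$ (whose dimension is the number of cycles of $\sigma$) with the $S_n$-invariant subspace (which is one-dimensional); the fact you actually need is just the splitting $\Q^n\cong\Q\oplus(\Q^n/\Q)$ of $S_n$-representations, which makes the characteristic polynomials multiply and justifies removing exactly one factor of $(t-1)$.
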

\noindent This identity is well-known to representation theorists, but we will give a topological proof.
\begin{proof}
  Consider the torus $\Torus^n=\R^n/\Z^n$, with its 1--dimensional
  subtorus $\Delta=\{(t,\ldots,t)\}$. The action of $S_n$ on $\Torus^n$ by
  permuting the coordinates descends to an action on the
  $(n-1)$--dimensional torus $\Torus^n/\Delta$. This action has the 
  key property that if $\sigma\in S_n$ decomposes into $i$
  cycles, then the fixed set $\Fix_{\Torus^n/\Delta}(\sigma)$ is the union
  of finitely many $(i-1)$--dimensional tori. We can describe these fixed sets explicitly as follows.

  It is trivial that $\Fix_{\Torus^n}(\sigma)$ is an $i$--dimensional
  torus, consisting of those vectors whose coordinates are constant on
  each subset determined by a cycle. For example, if $\sigma=(1\ 2\
  3)(4\ 5\ 6)$ then $\Fix_{\Torus^n}(\sigma)$ is the subtorus 
  $\{(x,x,x,y,y,y)\}$. The fixed set $\Fix_{\Torus^n}(\sigma)$ always
  descends to an $(i-1)$--dimensional torus contained in
  $\Fix_{\Torus^n/\Delta}(\sigma)$. This need not exhaust
  $\Fix_{\Torus^n/\Delta}(\sigma)$, though. For example, the
  2--dimensional torus \[{\textstyle\{(x,x+\frac{1}{3},x+\frac{2}{3},
  y,y+\frac{1}{3},y+\frac{2}{3})\}}\] upon which $\sigma$ acts by
  rotation descends to a 1-dimensional torus in $\Torus^n/\Delta$ which is
  fixed by $\sigma$, different from the previous one.

  However, any vector $v\in \Torus^n$ which descends to
  $\Fix_{\Torus^n/\Delta}(\sigma)$ satisfies $\sigma\cdot v=v+A$ for some
  $A\in \Delta$. If $m$ is the order of $\sigma$, the identity
  $v=\sigma^m\cdot v=v+mA$ implies $mA=0$. Then $mv$ satisfies
  $\sigma\cdot mv=mv+mA=mv$. Thus for any $\sigma$, the degree
  $(n!)^n$ isogeny $\Torus^n\to \Torus^n$ given by multiplication by $n!$
  takes the preimage of $\Fix_{\Torus^n/\Delta}(\sigma)$ onto
  $\Fix_{\Torus^n}(\sigma)$. (This is certainly overkill; in fact
  multiplication by $n$ suffices, since we can check that the
  existence of such a $v$ implies that $\sigma$ is a product of
  $m$--cycles.) Thus the preimage of $\Fix_{\Torus^n/\Delta}(\sigma)$ is a
  union of finitely many $i$--dimensional tori, each of which descends
  to an $(i-1)$--dimensional torus in $\Fix_{\Torus^n/\Delta}(\sigma)$ as
  claimed.
  
  In particular, if $\sigma$ is an $n$--cycle we must have
  $A=(\frac{a}{n},\ldots,\frac{a}{n})$ for some $a\in \Z/n\Z$. For
  $\sigma=(1\ \cdots\ n)$, say, the fixed set
  $\Fix_{\Torus^n/\Delta}(\sigma)$ consists of the $n$ points represented
  by $\{(0,\frac{a}{n},\frac{2a}{n},\ldots,\frac{(n-1)a}{n})\}$ for
  $a\in\Z/n\Z$.

  Since a union of positive-dimensional tori has Euler characteristic
  0, we see that $\chi(\Fix\sigma)=0$ unless $\sigma$ is an
  $n$--cycle, in which case $\chi(\Fix\sigma)=n$. This is exactly the
  class function $n\chi_{(n)}$. By the Lefschetz fixed point theorem,
  \[n\chi_{(n)}(\sigma)=\chi(\Fix \sigma)
  =\sum_{k=0}^{n-1}\tr\big(\sigma_*|H^k(\Torus^n/\Delta;\Q)\big)\] As $S_n$-representations we have
  $H^1(\Torus^n;\Q)\approx \Q^n$, with $\Delta$
  representing the unique trivial subrepresentation. Therefore 
  $H^1(\Torus^n/\Delta;\Q)$ is isomorphic to the standard representation
  $V_1=\Q^n/Q$ of $S_n$. As with any torus, we have \[H^i(\Torus^n/\Delta;\Q)\approx \bwedge^i H^1(\Torus^n/\Delta;\Q)\approx \bwedge^i (\Q^n/\Q)=V_k.\] Therefore the Lefschetz formula becomes the desired formula
  \[n\chi_{(n)}=\sum_{k=0}^{n-1}\chi_{V_k}.\qedhere\] 
\end{proof}
%(One enjoyable proof goes by applying the Lefschetz fixed point theorem to the action of $S_n$ on the quotient of the torus $\R^n/\Z^n$ the diagonal $\R/\Z\subset \R^n/\Z^n$, since the cohomology of this quotient $X$ is $H^k(X;\Z)\approx \bwedge^k(\Z^n/\Z)$.) 

\begin{proof}[Proof of Proposition~\ref{pr:PNTtori}]
With $\chi_1$ as in Lemma~\ref{le:chi1}, we have 
$\pi(q,n)=\sum_{T\in \T_n(\F_q)}\chi_1(T)$. 
%We need the following identity of characters:
%\[\chi_1=\frac{1}{n}\cdot \sum_{k=0}^{n-1}(-1)^k\chi_{V_k},\qquad\quad \text{where }V_k=\bwedge^k\big(\Q^n/\Q)\]
%(One enjoyable proof goes by applying the Lefschetz fixed point theorem to the action of $S_n$ on the quotient of the torus $\R^n/\Z^n$ the diagonal $\R/\Z\subset \R^n/\Z^n$, since the cohomology of this quotient $X$ is $H^k(X;\Z)\approx \bwedge^k(\Z^n/\Z)$.) 
Therefore Theorem~\ref{th:chitori} gives
\begin{equation}
\label{eq:piqn2}
\pi(q,n)=\frac{q^{n^2-n}}{n}\sum_{i,k}(-1)^k q^{-i}\langle V_k,R_i[x_1,\ldots,x_n]\rangle.
\end{equation}
The representation $V_k$ is irreducible with partition $(n-k,1,\ldots,1)$, and its Young diagram is the \emph{hook} $\lambda_k$ with $n-k$ boxes in the first row, followed by a column of $k$ boxes. A standard tableau of shape $\lambda_k$ is determined by the labels of the $k$ boxes in the column. The descent set of such a tableau is easy to describe: $s$ is a descent if and only if $s+1$ labels a box in the column. Therefore a standard tableau of shape $\lambda_k$ is determined by its descent set $S$, which is a $k$-element subset of $\{1,\ldots,n-1\}$.

To sum up, each subset $S\subset \{1,\ldots,n-1\}$ occurs as the descent set of a unique standard tableau of shape $\lambda_k$ when $k=|S|$, and for no other shape $\lambda_{k'}$, and its major index is $ \sum_{s\in S}s$. Then we can rewrite \eqref{eq:piqn2} as:
\[\pi(q,n)=\frac{q^{n^2-n}}{n}\sum_{S\subset \{1,\ldots,n-1\}}(-1)^{|S|}q^{-\sum_{s\in S}s}\]
This sum can be factored over $j\in \{1,\ldots,n-1\}$ as
\[\sum_{S\subset \{1,\ldots,n-1\}}(-1)^{|S|}q^{-\sum_{s\in S}s}=\prod_{j=1}^{n-1}(1-q^{-j})\]
Pulling a factor of $q^j$ out of $q^{n^2-n}$ for each $j=1,\ldots,n-1$ gives 
\[\pi(q,n)=\frac{q^{\binom{n}{2}}}{n}(q-1)(q^2-1)\cdots(q^{n-1}-1) \]
as claimed.
\qedhere
\end{proof}

\small

\noindent
Dept.\ of Mathematics\\
Stanford University\\
450 Serra Mall\\
Stanford, CA 94305\\
E-mail: church@math.stanford.edu
\medskip

\noindent
Dept.\ of Mathematics\\
University of Wisconsin\\
480 Lincoln Drive\\
Madison, WI 53706\\
E-mail: ellenber@math.wisc.edu
\medskip

\noindent
Dept.\ of Mathematics\\
University of Chicago\\
5734 University Ave.\\
Chicago, IL 60637\\
E-mail: farb@math.uchicago.edu

\end{document}